\let\oldalpha\alpha
\renewcommand{\alpha}{\scalebox{0.85}{$\oldalpha$}}
\pgfplotsset{compat=1.18}
\theoremstyle{plain}
\newcommand{\shelf}[1]{}
\newcommand{\change}[1]{#1}
\title{Some Results on Causal Modalities\\in General Spacetimes}
\author{Marco Lewis \qquad\qquad Nesta van der Schaaf
\institute{Université Paris-Saclay, CNRS, CentraleSupélec,\\ ENS Paris-Saclay, Inria, Laboratoire Méthodes Formelles,\\ 91190, Gif-sur-Yvette, France}}
\newcommand{\titlerunning}{Causal Modalities in General Spacetimes}
\newcommand{\authorrunning}{M. Lewis, N. van der Schaaf}
\def\etc{\emph{etc}.}
\def\ie{\emph{i.e.},}
\def\eg{\emph{e.g.},}
\newcommand{\norm}[1]{\left\lVert#1\right\rVert}
\DeclareMathOperator{\im}{im}
\newcommand{\genericworld}{M}
\newcommand{\genericrelation}{\mathrel{\triangleleft}}
\newcommand{\ngenericrelation}{\ntriangleleft}
\newcommand{\genericeval}{V}
\newcommand{\atomicprops}{AP}
\newcommand{\mframe}[2]{\langle #1, #2 \rangle}
\newcommand{\genericframe}{\mframe{\genericworld}{\genericrelation}}
\newcommand{\mmodel}[3]{\langle #1, #2, #3 \rangle}
\newcommand{\genericmodel}{\mmodel{\genericworld}{\genericrelation}{\genericeval}}
\newcommand{\entail}[3]{#1\if\relax\detokenize{#2}\relax\else,\fi #2 \models #3}
\newcommand{\nentail}[3]{#1\if\relax\detokenize{#2}\relax\else,\fi #2 \not\models #3}
\newcommand{\mentail}[3]{\entail{#1}{#2}{#3}}
\newcommand{\nmentail}[3]{\nentail{#1}{#2}{#3}}
\newcommand{\mentailgeneric}[1]{\mentail{\genericmodel}{x}{#1}}
\newcommand{\chron}{\ll}
\newcommand{\nchron}{\not\chron}
\newcommand{\chroneq}{\mathrel{\underline{\chron}}}
\newcommand{\nchroneq}{\centernot\chroneq}
\newcommand{\horismos}{\rightarrow}
\newcommand{\caus}{\preccurlyeq}
\newcommand{\ncaus}{\centernot\preccurlyeq}
\newcommand{\after}{\mathrel{\alpha}}
\newcommand{\nafter}{\not\after}
\newcommand{\necc}{\square}
\newcommand{\poss}{\lozenge}
\newcommand{\inev}{\raise.4ex\hbox{$\bigtriangledown$}} 
\newcommand{\modal}[1]{\mathcal{L}(#1)}
\newcommand{\afterformula}{a \alpha f}
\newcommand{\aftertwoformula}{a \alpha_2 f}
\newcommand{\mK}{\mathbf{K}}
\newcommand{\afterlogic}{\mathbf{D4da}}
\newcommand{\aftertwologic}{\mathbf{D4da_2}}
\newcommand{\gabb}{\text{(Gabb)}}
\tikzset{empty/.style={draw=none},
    refl/.style={fill=black},
    every label/.style={draw=none,inner sep=0},
}
\newcommand{\tikzminkowski}{\def\xmax{2}
          \draw[->,thick] (0,-\xmax) -- (0,\xmax+0.2) node[left=-.5] {$t$};
          \draw[->,thick] (-\xmax,0) -- (\xmax+0.2,0) node[below=0] {$x$};
}
\begin{document}

\maketitle

\begin{abstract}
    Causality is one of the fundamental structures of spacetimes, determining the possible behaviour and propagation of physical information. 
    Causal structure can be analysed through the various modal logics it induces.
    The modal logics for the chronological and causal relations of the archetypal Minkowski spacetime have been classified.
    However, only partial results have been achieved for the strict variant of the causal relation, known as the \emph{after relation}.
    \change{ 
    Towards classification, it was shown by Shapirovsky and Shehtman that the after modality in Minkowski space satisfies a formula we call the `after formula'.}

    \change{
    The present work continues this analysis towards arbitrary spacetimes.
    In particular, we prove that the after modality in any smooth spacetime satisfies the after formula.}
    We introduce a related modal formula that demonstrates that the logic of two-dimensional spacetimes are more expressive than higher-dimensional ones.
    Lastly, we study the interrelation between the logical properties and physical properties along the causal ladder.
\end{abstract}

\section{Introduction}
\emph{Spacetime} provides the backdrop in which physical bodies and information propagate~\cite{penrose1972TechniquesDifferentialTopology}. An object can move through spacetime in one of two ways: 
\begin{itemize}
    \item \emph{Causally:} moving speeds up to the speed of light.
    The traditional interpretation is that all physical information must propagate in this way.
    If it is possible to get from $x$ to $y$ causally we write $x\caus y$;
    \item \emph{Chronologically:} moving strictly slower than the speed of light.
    All bodies with non-zero mass are understood to move this way.
    Similarly we then write $x\chron y$.
\end{itemize}

From both a physical and philosophical standpoint it is of interest to understand why our universe exists in the way it does.
For example Tegmark~\cite{Tegmark1997} uses the theory of partial differential equations to argue spacetime must have one temporal dimension and three spatial dimensions, although the discussion is non-rigorous. 

Similarly, with the causal relations defined, as well as their variations~(Section~\ref{sec:spacetime:causalstruct}), it is possible to identify behaviours of the spacetime based on their mathematical properties.
For example, the existence of time travel (closed/looping causal curves) may be prohibited by assuming that $\caus$ is a \emph{partial order}.
Thus we can single out spacetimes with physically desirable properties.
The \emph{causal ladder} is a well-studied hierarchy of such properties of the causal structure of spacetimes, one of the cornerstones of mathematical relativity theory~\cite{Minguzzi08}.

In this work we approach the problem of identifying well-behaved spacetimes by taking a logical perspective.
This way physically desirable properties are related to logical formula or rules.
There is a broad body of literature on logics of time and computation~\cite{goldblatt1992LogicsTimeComputation}, with amongst the most well-known being temporal logic~\cite{DemriGorankoLange2016, GabbayHodkinsonReynolds1994}.
\change{Here, we consider the interactions between the frames of spacetimes and \emph{modal logic}~\cite{BoxesAndDiamonds,Blackburn2001}.}

There has been some research into the modal logic of \emph{Minkowski space}, the spacetime of special relativity (Example~\ref{example:minkowski space}).
One of the early works is by Goldblatt~\cite{Goldblatt1980}, who showed that the modal logic of any $1+n$-dimensional Minkowski spacetime with the causal relation~$\caus$ is~$\mathbf{S4.2}$.
Shehtman~\cite{Shehtman1983} also observed similar results around that time for various~$\mathbf{S4}$ logics.
Shapirovsky and Shehtman~\cite{Shapirovsky2002} showed that the chronological relation $\chron$, again for any $1+n$-dimensional Minkowski spacetime, is given by the modal logic $\mathbf{OI.2}$, which is a sublogic of $\mathbf{S4.2}$.
Recent works, such as \cite{Hirsch2018, Hirsch2022}, have been investigating the temporal modal logic, using both past and future modalities, of the strict variant of the causal relation (what is referred to as the \emph{after} relation; see Section~\ref{sec:spacetime:causalstruct}).
However, it has only been shown in two-dimensional Minkowski spacetime that the temporal logic of the after relation is decidable \cite{Hirsch2018}.
\change{Whilst the frames of Minkowski spacetime with the chronological and causal relation are complete with respect to their logics, it is unknown what the modal logic of Minkowski spacetime with the after relation is and whether it is complete, although previous works have noted it has distinct properties~\cite{Goldblatt1980, Shapirovsky2005}.}

However, to the best of our knowledge, the modal logics of general spacetimes has not been investigated.\footnote{Though there are related approaches such as via domain theory~\cite{martin2006DomainSpacetimeIntervals} or orthomodular lattices~\cite{casini2002LogicCausallyClosed}.}
What is the modal logic of a general spacetime?
Some properties can be derived from the general differential geometric setup, such as that $\chron$ is transitive and 2-dense ($\mathbf{OI}$) and $\caus$ is a preorder ($\mathbf{S4}$).
One of the main difficulties in providing a more refined characterisation lies in the fact that, even though spacetimes locally behave like Minkowski space (locally inheriting the logic), general spacetimes can vary greatly in global topology and causal structure.
Thus, to get a better grasp on the situation in this work we focus on how the modal logics of the spacetime vary along where it sits on the causal ladder.
We demonstrate how as we add physically desirable properties (moving down the causal ladder), the respective modal logics change.

\paragraph{Outline.}
The main goal of this work is to investigate modal logic within the context of the causal relations of general spacetimes.
We provide an introduction to spacetimes and modal logic in Sections~\ref{sec:spacetime} and~\ref{sec:modal} respectively.
The main results from this work are as follows:
\begin{enumerate}
    \item we define and discuss the properties of a new class of spacetimes on the causal ladder, referred to as \emph{causally non-totally vicious} spacetimes (Section~\ref{sec:spacetime:cntv});
    \item we find the sub-logics of different spacetimes based on their causal relations and note the importance of a formula introduce in~\cite{Shapirovsky2005} for spacetimes (Section~\ref{sec:modalspacetime});
    \item we show that the modal logic of two-dimensional spacetimes is larger than other $1+n$-dimensional spacetimes (Section~\ref{sec:separable}); and
    \item we find the sub-logics for spacetimes on several different levels of the causal ladder and show that there is a limit to the expressibility of causal ladder properties in modal logic (Section~\ref{sec:modalcausalladder}).
\end{enumerate}

\section{Spacetimes and the Causal Ladder}
\label{sec:spacetime}
In this section we recall the necessary definitions on \emph{smooth spacetimes} and outline the so-called \emph{causal ladder}. For general literature we refer to~\eg{}~\cite{penrose1972TechniquesDifferentialTopology, landsman2021FoundationsGeneralRelativity}, and for a review of the causal ladder to~\cite{minguzzi2019LorentzianCausalityTheory}.


\change{
Formally, a \emph{spacetime} is a (real second countable Hausdorff connected) smooth manifold~$M$ equipped with a \emph{Lorentzian metric}~$g$ and a \emph{time orientation}~$T$. The metric defines at each point~$x\in M$ a nondegenerate symmetric bilinear form~$g_x\colon T_xM\times T_xM\to \mathbb{R}$ on the tangent space, to be thought of as a generalised inner product. The difference is that~$g_x$ need not be positive definite, meaning non-zero vectors may have zero or even negative length. The Lorentzian signature is interpreted as one \emph{time dimension} and $n$ \emph{spatial dimensions}. The time orientation $T$ is a global smooth vector field that determines a consistent notion of future.
}

\subsection{Causal Structure}
\label{sec:spacetime:causalstruct}
One of the most fundamental structures of spacetime is its \emph{causal structure}. This consists of two classes of curves, the \emph{causal curves} and \emph{chronological curves}. Causal curves describe the possible movement of bodies going at the speed of light or slower, and chronological curves describe paths going strictly slower than the speed of light. These curves determine fundamental restrictions of how information is allowed to propagate in the spacetimes. We follow the conventions of~\cite{minguzzi2019LorentzianCausalityTheory}. Fixing a time orientation $T$, a tangent vector $v\in T_xM$ at some point $x\in M$ is called:
\begin{multicols}{2}
    \begin{itemize}
        \item \emph{timelike} if $g_x(v,v)<0$;
        \item \emph{lightlike} if $v\neq 0$ and $g_x(v,v)=0$;
        \item \emph{causal} if it is timelike or lightlike, \ie{}~$g_x(v,v)\leq 0$ and $v\neq 0$.
    \end{itemize}
\columnbreak
    \begin{itemize}
        \item \emph{future directed} (fd) if $g_x(T_x,v)<0$;
        \item \emph{past directed} (pd) if $g_x(T_x,v)>0$.
    \end{itemize}
\end{multicols}
\noindent By \emph{curve} we mean a continuous (piecewise) smooth function $\gamma\colon I\to M$ defined on an interval~$I\subseteq \mathbb{R}$.
Denote $\im(\gamma) = \{\gamma(t):t\in I\}$.
We say a curve $\gamma$ is:
\begin{itemize}
    \item \emph{causal} if all velocity tangent vectors are fd causal;
    \item \emph{chronological} or \emph{timelike} if all velocity tangent vectors are fd timelike;
    \item \emph{lightlike} or a \emph{lightcurve} if all velocity tangent vectors are fd lightlike.
\end{itemize}

The fundamental \emph{causal relations} are defined in terms of these classes of curves. For $x,y\in M$ we define the \emph{causal(ity)}, \emph{chronology} and \emph{horismos relation} as follows, respectively:
\begin{itemize}
    \item $x\caus y$ iff there exists a causal curve from $x$ to $y$, or $x=y$;
    \item $x\chron y$ iff there exists a timelike curve from $x$ to $y$;
    \item $x\horismos y$ iff there exists a lightlike curve from $x$ to $y$, or $x=y$ (\ie{}~$x\caus y$ and not $x \chron y$).
\end{itemize}

These relations induce the \emph{causal} and \emph{chronological cones} ($J$ and $I$ respectively):
\begin{align*}
    J^{-}(x) = \{ z\in \genericworld : z\caus x \}, \qquad&\qquad J^{+}(x) = \{y\in \genericworld : x\caus y\},
    \\
    I^{-} (x) = \{ z\in \genericworld : z\chron x \}, \qquad&\qquad I^{+}(x) = \{y\in \genericworld : x\chron y\}.
\end{align*}


Additionally, we define the \emph{after} (or \emph{strictly causal}) and \emph{reflexive chronological relation} as follows, respectively:
\begin{itemize}
    \item $x \after y$ iff there exists a causal curve from $x$ to $y$ \change{(read as ``$y$ happens after $x$'')};
    \item $x \chroneq y$ iff there exists a timelike curve from $x$ to $y$, or $x=y$.
\end{itemize}

The after relation was introduced in \cite{robb1914TheoryTimeSpace}. It is the `strict' version of the causal relation $\caus$, since causal curves are by definition non-degenerate, only allowing a reflexive pair $x\after x$ when there exists a causal loop from $x$ to $x$. Similarly, $\chroneq$ is the reflexive completion of $\chron$, thus always allowing $x \chroneq x$.
We write $\nchron$, $\ncaus$ , $\nafter$, or $\nchroneq$ if two points are not related by the respective relation. We say $x$ and $y$ are \emph{spacelike separated} if $x\ncaus y$ and $y\ncaus x$.

\begin{example}[Minkowski spacetime]\label{example:minkowski space}
One of the most important spacetimes is $1+n$-dimensional \emph{Minkowski space}, based on the manifold $\mathbb{R}^{1+n}$ (identified with its own tangent space at the origin) with metric defined via
\[
    \change{\eta}(x,y) = -x^0y^0 + \sum_{i=1}^n x^iy^i,
\]
where we denote generic points $x\in \mathbb{R}^{1+n}$ in components as $(x^0,\vec{x})=(x^0,x^1,\ldots,x^n)$.
The causal relations then have the following convenient characterisation in terms of the Euclidean norm~$\norm{\cdot}$ on~$\mathbb{R}^n$:
\begin{align}
    x \chron y 
    &\quad \text{iff}\quad
    y^0 > x^0 + \norm{\vec{y}-\vec{x}}, \label{eq:chron}
    \\
    x \caus y
    &\quad \text{iff}\quad
    y^0 \geq x^0 + \norm{\vec{y}-\vec{x}}, \label{eq:caus}
    \\
    x \horismos y
    &\quad \text{iff}\quad
    y^0 = x^0 + \norm{\vec{y}-\vec{x}}. \label{eq:horismos}
\end{align}

Figure~\ref{fig:mink2} depicts 2-dimensional Minkowski spacetime and the light lines from the origin.
These lines are at 45\textdegree~angles to the axes.
The highlighted interior region above (below) $t=0$ is the region where the future-directed (past-directed) chronological points are up to some $t'$.
Including the light lines on the border of the regions give all the points that are causal with respect to $(-1,0)$.
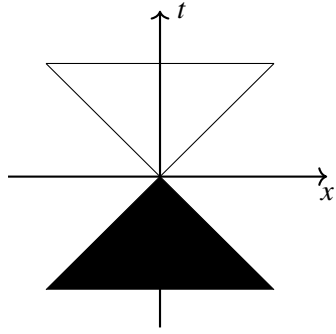
\begin{figure}[t]
    \centering
    \begin{tikzpicture}
        \def\x{0}
        \def\s{1.5}
        \fill[white] (\x,0) -- (\x+\s,\s) -- (\x-\s,\s);
        \fill[black] (\x,0) -- (\x+\s,-\s) -- (\x-\s,-\s);
        \draw (\x,0) -- (\x+\s,\s);
        \draw (\x,0) -- (\x-\s,\s);
        \draw (\x-\s,\s) -- (\x+\s,\s);
        \draw (\x,0) -- (\x+\s,-\s);
        \draw (\x,0) -- (\x-\s,-\s);
        \tikzminkowski;
    \end{tikzpicture}
    \caption{Two-dimensional Minkowski spacetime.}
    \label{fig:mink2}
\end{figure}

\end{example}

\subsection{Properties of the causal relations}
A central theme of this work is characterising the (modal) properties of the causal relations of a spacetime. We first collect some well-established results from the literature.

\begin{lemma}\label{lemma:relations between causal orders}
    We have the following relations between the causal orders:
    \begin{itemize}
        \item $x\chron y \implies x\after y \implies x\caus y$;
        \item if $x\neq y$ then $x\after y$ iff $x\chron y$ or $x\horismos y$;
    \end{itemize}
\end{lemma}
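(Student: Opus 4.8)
The plan is to reduce both statements to the defining containments among the three classes of tangent vectors, and hence among the three classes of curves. The key observation is that every timelike tangent vector is causal (from $g_x(v,v)<0$ we get $g_x(v,v)\leq 0$ with $v\neq 0$), and likewise every lightlike tangent vector is causal. Passing to curves pointwise, every timelike curve is a causal curve and every lightlike curve is a causal curve. These two inclusions drive everything.

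For the first chain I would argue directly from the definitions. If $x\chron y$ there is a timelike curve from $x$ to $y$; being in particular a causal curve, it witnesses $x\after y$. If $x\after y$ there is a causal curve from $x$ to $y$, which is exactly a witness for $x\caus y$ (the relation $\caus$ only additionally allows the reflexive case $x=y$). Hence $x\chron y \implies x\after y \implies x\caus y$.

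For the biconditional I would fix $x\neq y$ and treat the two directions by case analysis. For the backward direction: if $x\chron y$ then $x\after y$ by the chain above; if instead $x\horismos y$, then since $x\neq y$ the reflexive clause of the horismos definition is excluded, so there is a genuine lightlike curve from $x$ to $y$, which is again a causal curve, giving $x\after y$. For the forward direction, suppose $x\after y$, so that $x\caus y$; if moreover $x\chron y$ we are done, and otherwise $x\caus y$ together with $\neg(x\chron y)$ is precisely the characterisation of $x\horismos y$ recorded in the definition.

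The only delicate point is this final appeal to the equivalence $x\horismos y \iff (x\caus y \text{ and } \neg(x\chron y))$, which the excerpt states parenthetically as part of the definition of the horismos relation. Taking it as given makes the forward direction immediate. If one instead insisted on deriving it from the lightlike-curve formulation, one would need the standard Lorentzian ``push-up'' argument showing that a causal connection that is not timelike must in fact be realised by a lightlike curve; I expect this to be the main obstacle, and for a clean argument here I would simply invoke the definitional characterisation rather than reprove it.
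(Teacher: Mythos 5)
Your proposal is correct: the paper states this lemma without proof, presenting it among ``well-established results from the literature'', and your direct argument from the curve-class inclusions (timelike and lightlike vectors are causal, hence timelike and lightlike curves are causal curves) is exactly the intended standard route. You also correctly isolate the single nontrivial point---that for $x\neq y$, $x\caus y$ together with $\lnot(x\chron y)$ yields $x\horismos y$---and your decision to invoke the parenthetical characterisation in the paper's definition of $\horismos$ is legitimate here, since the paper itself builds that equivalence into the definition (its curve-level justification being the standard push-up argument, which the paper records as a proposition immediately afterwards).
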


\begin{definition}
    A relation, ${\genericrelation} \subseteq S \times S$ is \emph{semi-full} if:
    \begin{itemize}
        \item (seriality) $\forall x \in S$, there exists $y \in S$ such that $x \genericrelation y$;
        \item (2-dense) if $x \genericrelation y_1, y_2$, there exists $z \in S$ such that $x \genericrelation z \genericrelation y_1, y_2$.
    \end{itemize}
\end{definition}

\begin{lemma}[\cite{penrose1972TechniquesDifferentialTopology, kronheimer1967StructureCausalSpaces}]
    We have the following relational properties:
    \begin{itemize}
        \item $\chron$ is semi-full, $\after$ is serial;
        \item $\chron$, $\chroneq$, $\after$, $\caus$ are transitive;
        \item $\caus$, $\chroneq,\horismos$ are reflexive.
    \end{itemize}
    \label{lem:relcommonproperties}
\end{lemma}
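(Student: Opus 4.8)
The plan is to treat the three families of properties separately, since each rests on a different basic feature of the causal structure. \emph{Reflexivity} is immediate: the relations $\caus$, $\chroneq$, and $\horismos$ are all defined so as to explicitly admit the degenerate pair $x=y$, so reflexivity holds by definition with no further argument.

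For \emph{transitivity} I would argue by concatenation of curves. Suppose $x\caus y$ and $y\caus z$; if either relation holds by equality the conclusion is trivial, and otherwise there are causal curves $\gamma_1$ from $x$ to $y$ and $\gamma_2$ from $y$ to $z$, whose concatenation $\concat{\gamma_1}{\gamma_2}$ (first traversing $\gamma_1$, then $\gamma_2$) is again a piecewise-smooth curve all of whose one-sided velocity vectors are fd causal, hence a causal curve witnessing $x\caus z$. The identical concatenation argument restricted to timelike curves yields transitivity of $\chron$, and the same argument together with the explicit equality cases handles $\chroneq$; transitivity of $\after$ is just the concatenation argument with no equality cases to consider.

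For \emph{seriality} of $\chron$ (and hence of $\after$, since every timelike curve is causal) I would use the Lorentzian structure directly: the signature guarantees a timelike tangent vector at every point, the time orientation $T$ singles out the fd one, and integrating this direction in a local chart produces a short timelike curve from $x$ to some distinct point $y$, so $x\chron y$.

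The substantial point is \emph{2-density} of $\chron$, and here I would invoke the standard fact that the chronological cones $I^{\pm}$ are open. Given $x\chron y_1$ and $x\chron y_2$, fix a timelike curve $\gamma$ from $x$ to $y_1$. Since $x\in I^{-}(y_2)$ and this set is open, every point of $\gamma$ sufficiently close to $x$ also lies in $I^{-}(y_2)$; choosing such a point $z\neq x$ on $\gamma$ gives $x\chron z$ and $z\chron y_1$ from the two sub-curves of $\gamma$, and $z\chron y_2$ from $z\in I^{-}(y_2)$, so the single point $z$ witnesses all three relations. I expect the openness of $I^{\pm}$ to be the only nontrivial input, and it is precisely where the smooth Lorentzian geometry enters rather than the purely combinatorial curve definitions used for the other parts.
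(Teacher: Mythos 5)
Your proposal is correct and coincides with the standard arguments in the references the paper cites for this lemma (Penrose; Kronheimer--Penrose), which the paper itself does not reproduce: reflexivity is definitional via the explicit $x=y$ clauses, transitivity follows from concatenation of (piecewise-smooth) causal/timelike curves, seriality from integrating a future-directed timelike field, and 2-density of $\chron$ from openness of the chronological cones $I^{\pm}$, exactly as you identify. The only input you correctly flag as nontrivial, openness of $I^{\pm}$, is indeed where the smooth Lorentzian structure enters, so nothing is missing.
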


The following is one of the fundamental rules of causality theory in smooth spacetimes.
\begin{proposition}[Push-up Rule~\cite{penrose1972TechniquesDifferentialTopology}]
We have that for $x, y, z \in \genericworld$:
\begin{itemize}
    \item $x \chron y$, $y \caus z \implies x \chron z$;
    \item $x \caus y$, $y \chron z \implies x \chron z$.
\end{itemize}
\end{proposition}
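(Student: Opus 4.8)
The plan is to reduce the global statement to a purely local one and then propagate it along the causal curve. First I would record the model computation in Minkowski space (Example~\ref{example:minkowski space}), which already exhibits the mechanism: if $x\chron y$ and $y\caus z$ then by~\eqref{eq:chron} and~\eqref{eq:caus} we have $y^0-x^0>\norm{\vec y-\vec x}$ and $z^0-y^0\geq\norm{\vec z-\vec y}$, and adding these two inequalities and applying the triangle inequality to the Euclidean norm gives $z^0-x^0>\norm{\vec z-\vec x}$, that is, $x\chron z$. The entire proof is about lifting this single inequality to curved spacetimes, where it holds only locally.

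Second, I would isolate the atomic local fact: in a convex normal neighbourhood $C$ the relations $\chron$ and $\caus$ between any two of its points are governed by the causal character of the unique connecting geodesic inside $C$ (future timelike for $\chron$, future causal for $\caus$), and there a future timelike geodesic followed by a future causal geodesic composes to a future timelike connecting geodesic. From this I would derive the form actually used: if $a\chron b$ globally and $b\caus c$ with $b,c$ lying in a common convex normal neighbourhood~$C$, then $a\chron c$. The trick is to slide back slightly along the timelike curve realising $a\chron b$ to a nearby point $b'\in C$ with $a\chron b'$ and $b'\chron b$; then $b'\chron b\caus c$ all lie in $C$, the atomic fact gives $b'\chron c$, and transitivity of $\chron$ (Lemma~\ref{lem:relcommonproperties}) yields $a\chron c$.

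Third, I would globalise. Given a causal curve $\sigma$ from $y$ to $z$ realising $y\caus z$, its image is compact, so I can choose $0=s_0<\cdots<s_k=1$ with each $\sigma([s_i,s_{i+1}])$ contained in a single convex normal neighbourhood $C_i$. Starting from $x\chron y=\sigma(s_0)$, I would induct on $i$: the hypothesis $x\chron\sigma(s_i)$ together with $\sigma(s_i)\caus\sigma(s_{i+1})$ inside $C_i$ feeds into the local form above to give $x\chron\sigma(s_{i+1})$, and after $k$ steps $x\chron\sigma(s_k)=z$. (Equivalently, one can argue that $\{t:x\chron\sigma(t)\}$ is nonempty, open, and — by the local form — closed in the connected interval $[0,1]$.) The second bullet then comes for free by time-reversal duality: replacing the orientation $T$ by $-T$ interchanges $\chron$ and $\caus$ with their time-reverses while preserving their interdefinability, so $x\caus y$, $y\chron z$ in the original spacetime becomes $z\chron y$, $y\caus x$ in the reversed one, whence $z\chron x$ by the first bullet, i.e.\ $x\chron z$.

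The main obstacle is the atomic local fact, which is the only genuinely geometric input: it requires both the existence of convex normal neighbourhoods and the characterisation of $\chron$ and $\caus$ within them via connecting geodesics, followed by the Lorentzian reverse-triangle inequality showing that a timelike-then-causal pair of geodesics composes to a timelike one. Everything else — the sliding-back step, the compactness partition, the induction, and the time-reversal argument for the second bullet — is routine once this local statement is in hand.
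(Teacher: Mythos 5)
Your proposal is correct, but note that the paper itself contains no proof of this proposition: it is stated as a known result and cited to Penrose's monograph, so there is no internal argument to compare against. Your route --- the local push-up in convex normal neighbourhoods, a Lebesgue-number partition of the causal curve, and induction along the segments, with time-reversal handling the second bullet --- is one of the two standard proofs in the literature. The argument behind the citation is the other: concatenate the timelike curve realising $x \chron y$ with the causal curve realising $y \caus z$ to obtain a causal curve from $x$ to $z$ that is not a (reparametrised) null geodesic, since it contains a timelike arc, and then invoke the theorem that any such causal curve can be deformed, with endpoints fixed, into a timelike curve. The variational proof buys brevity at the price of one substantial global theorem; yours buys elementarity --- the only genuinely geometric input is the geodesic characterisation of $\chron$ and $\caus$ inside convex neighbourhoods, exactly as you identify --- at the price of the bookkeeping (the slide-back to $b'$, the compactness partition). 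Both are legitimate, and your slide-back step is sound: choosing $b'$ close enough to $b$ keeps the connecting timelike arc inside $C$, and transitivity of $\chron$ (Lemma~\ref{lem:relcommonproperties}) finishes the step.

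Two micro-points. First, the paper's $\caus$ is reflexive by definition ($x \caus y$ iff there is a causal curve \emph{or} $x=y$), so you should dispose of the case $y=z$ (trivial) before assuming a connecting causal curve $\sigma$ exists. Second, your parenthetical open-closed variant is slightly off as stated: the set $\{t : x \chron \sigma(t)\}$ need not be closed in general (in Minkowski space it can be of the form $(t_0,1]$ when $x \horismos \sigma(t_0)$); what your local form actually gives is closure under limits from the left, i.e.\ if $[0,t) \subseteq S$ then $t \in S$, which together with openness and $0 \in S$ suffices by the usual continuity-induction. Your primary induction over the partition avoids this subtlety entirely and is correct as written.
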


Note that $\caus$ can be replaced with $\after$ in the push-up rule.

\begin{remark}
\label{rmk:diffafter}
    We make a few remarks on the history of the after relation within modal logic.
    Robb~\cite{robb1914TheoryTimeSpace} was first to define the after relation for spacetime.
    Goldblatt~\cite{Goldblatt1980} uses this to make some remarks on the modal logic of after in Minkowski spacetime.
    From here, two separate definitions of after have emerged (for $\mathbb{R}^n$).
    Shapirovsky and Shehtman~\cite{Shapirovsky2005}, Hirsch and McLean~\cite{Hirsch2018}/Brett~\cite{Hirsch2022}, and this work follow the definition using causal paths (referred to as \emph{causal after}, $\after$), whereas Phillips~\cite{Phillips1998}, and S. and J. Uckleman~\cite{Uckleman2007} define after (referred to as \emph{non-causal after}, $\after'$) as
    \begin{equation*}
        x\after'y~\quad\text{iff}\quad~\exists x_k < y_k~\text{and}~\forall j\neq k, x_j \leq y_j.
    \end{equation*}
    The latter definition does not capture the notion of causal paths that we are interested in but is still an interesting relation in its own right.
    The after relations are similar in $\mathbb{R}^2$: one relation is simply a rotation about the origin of the other, making them isomorphic and their modal logics the same.
    However, the relations have different behaviours in higher dimensions.
    The future cone of causal after is a cone whereas the one for non-causal after is a hypercube (Figure~\ref{fig:diffafter}), meaning their frames yield different logics.
    For instance, in 3-dimensions, $\after'$ has a property where at least two of any four distinct points (that are unrelated to each other) must share a common point after their root point~\cite{Phillips1998}, but $\after$ does not have this property~\cite{Shapirovsky2005} (one can pick four points such that there is no common point after the root).
    We do not study the modal logic of $\after'$ in this work, although some analysis we make in Section~\ref{sec:moreafter} and \ref{sec:separable:minkowski2} may apply to it.

    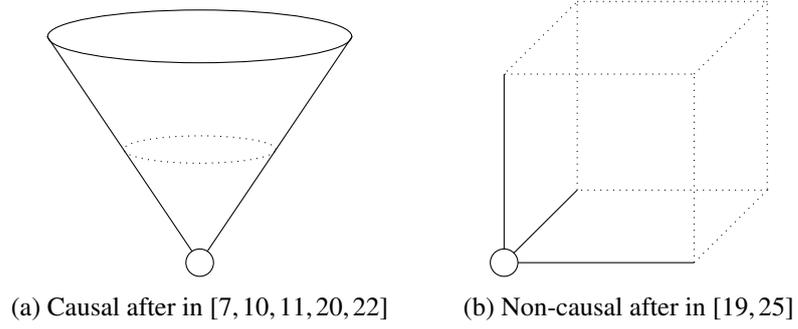
\begin{figure}[t!]
        \centering
        \begin{subfigure}{.35\textwidth}
            \centering
            \begin{tikzpicture}[scale=1.5]
                \node[ellipse,draw,minimum width=4cm, minimum height=.7cm,fill=white] (a) at (0,2) {};
                \node[ellipse,draw,dotted,minimum width=2cm, minimum height=.35cm,fill=white] (b) at (0,1) {};
                \draw (a.east) -- (0,0) -- (a.west);
                \node[draw,circle,fill=white] {};
            \end{tikzpicture}
            \caption{Causal after in~\cite{robb1914TheoryTimeSpace, Goldblatt1980, Shapirovsky2005, Hirsch2018, Hirsch2022}}
        \end{subfigure}
        \hspace{.5cm}
        \begin{subfigure}{.3\textwidth}
            \centering
            \begin{tikzpicture}[scale=2.5]
                \draw (1,0,0) -- (0,0,0);
                \draw (0,1,0) -- (0,0,0);
                \draw (0,0,-1) -- (0,0,0);
                \draw[dotted] (1,0,0) -- (1,1,0) -- (0,1,0);
                \draw[dotted] (1,0,0) -- (1,0,-1) -- (0,0,-1);
                \draw[dotted] (0,0,-1) -- (0,1,-1) -- (0,1,0);
                \draw[dotted] (0,1,-1) -- (1,1,-1) -- (1,0,-1);
                \draw[dotted] (1,1,0) -- (1,1,-1);
                \node[draw,circle,fill=white] {};
            \end{tikzpicture}
            \caption{Non-causal after in~\cite{Phillips1998, Uckleman2007}}
        \end{subfigure}
        \caption{After relations in $\mathbb{R}^3$.}
        \label{fig:diffafter}
    \end{figure}
\end{remark}

\subsection{The Causal Ladder}
\begin{figure}[t]
    \centering
    \begin{tikzpicture}[
        every path/.style={implies-, double equal sign distance}, 
        every node/.style={}, 
        node distance = 5mm and 5mm,
    ]
        \node[] (ntv) {Non-totally Vicious};
        
        \node[below = of ntv, minimum width = 3cm] (bntv) {};
        
        \node[below = of ntv] (chron) {Chronological};
        \node[below = of chron] (caus) {Causal};
        

        \node[below = of caus] (dist) {(Past-/Future-) Distinguishing};
        \node[below = of dist] (strongly) {Strongly Causal};
        \node[below = of strongly] (stably) {Stably Causal};
        \node[below = of stably] (gh) {Globally Hyperbolic};

        \draw (ntv) -- (chron);
        \draw (chron) -- (caus);
        
        
        \draw (caus) -- (dist);
        \draw (dist) -- (strongly);
        \draw (strongly) -- (stably);
        \draw (stably) -- (gh);
    \end{tikzpicture}
    \caption{The (Simple) Causal Ladder}
    \label{fig:causalladder}
\end{figure}
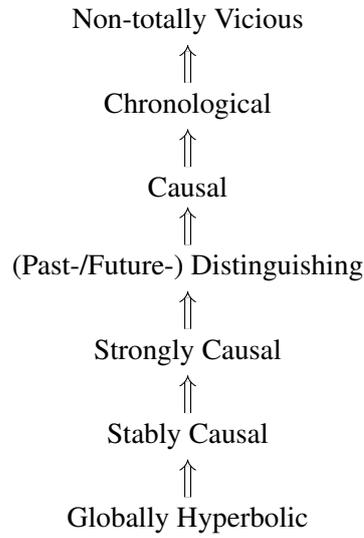

The causal ladder provides different classifications of spacetimes based on the properties of their causal relations. A basic version of it, sufficient for the present work, is depicted in Figure~\ref{fig:causalladder} (with the weakest property at the top).
Below we define the properties needed in this work (see Section~\ref{sec:modalcausalladder}); for more disambiguated versions see \eg{}~\cite[Figure 8]{minguzzi2019LorentzianCausalityTheory}.

\begin{definition}
    A spacetime is called:
    \begin{itemize}
        \item \emph{totally vicious} if $x\chron x$ for all $x\in M$ (\ie{}~$\chron$ is \emph{reflexive}).
        A spacetime is \emph{non-totally vicious} (NTV) if there is some $x \in M$ such that $x \nchron x$ (\ie{}~$\chron$ is not reflexive);
        
        \item \emph{chronological} if $x \nchron x$ for all $x \in M$ (\ie{}~$\chron$ is \emph{irreflexive});

        \item \emph{causal} if $x \caus y$ and $y \caus x$ imply $x = y$ (\ie{}~$\caus$ is \emph{anti-symmetric});

        \item \emph{past-distinguishing} (resp.~\emph{future-distinguishing}) if $I^{-}(x) = I^{-}(y)$ (resp.~$I^{+}(x) = I^{+}(y)$) implies $x = y$. If a spacetime is both past- and future-distinguishing it is called \emph{distinguishing};

        \item \emph{globally hyperbolic} if the \emph{causal diamonds} $J^{+}(x)\cap J^{-}(y)$ are compact in the manifold topology for all $x,y\in \genericworld$.
    \end{itemize}
\end{definition}

For proofs of implication between the different properties, see \cite{Minguzzi08,minguzzi2019LorentzianCausalityTheory}.
We note that the implications are strict, \eg{} there are chronological spacetimes that are not causal.
The study of the causal ladder and the underlying dependencies is one of the cornerstones of modern mathematical relativity theory.

We provide two examples of spacetime that reside on different levels of the causal ladder, but we will see other examples throughout the paper.

\begin{example}
    Minkowski space is globally hyperbolic. This follows straightforwardly from the description of the causal relation~$\caus$ in Equation~\eqref{eq:caus}, from which we see that the causal cones $J^{\pm}(x)$ are closed. Thus the diamonds $J^{+}(x)\cap J^{-}(y)$ are closed and bounded, and hence compact by the Heine-Borel theorem.
\end{example}

\begin{example}\label{ex:Misner spacetime}
The spacetime depicted in Figure~\ref{fig:cntvchron:cntv} is a non-totally vicious spacetime.
The manifold is $\mathbb{R} \times S^1$, the surface of an infinitely long unit cylinder, and the lightcones are tilted from $90^\circ$ to being upright at $0^\circ$. This is known as the \emph{Misner spacetime}, see~\cite[Example 4.28]{minguzzi2019LorentzianCausalityTheory} for details. It is non-totally vicious because points on the dotted line, $(0, \theta)$, cannot chronologically reach themselves (even though causally they can).
\end{example}

\subsection{Causally Non-Totally Vicious: A New Property for the Causal Ladder}
\label{sec:spacetime:cntv}
Causal ladder properties can have multiple and equivalent definitions.
In particular, the causal step has an interesting equivalent condition.

\begin{lemma}
    Let $\genericworld$ be a spacetime.
    Then $\genericworld$ is causal, \ie{}~$\caus$ is anti-symmetric, iff $\after$ is irreflexive.
\end{lemma}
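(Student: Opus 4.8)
The plan is to prove the two implications separately, using three ingredients already available: that $x \after y$ is by definition witnessed by a genuine (non-degenerate) causal curve from $x$ to $y$, the transitivity of $\after$ recorded in Lemma~\ref{lem:relcommonproperties}, and the chain $\after \implies \caus$ from Lemma~\ref{lemma:relations between causal orders}.

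For the direction ``$\after$ irreflexive $\implies \caus$ anti-symmetric'', I would argue by contradiction. Suppose $x \caus y$ and $y \caus x$ but $x \neq y$. Since $x \neq y$, the relation $x \caus y$ cannot be witnessed by the reflexive clause $x = y$, so it must come from an actual causal curve from $x$ to $y$; that is exactly $x \after y$. Symmetrically $y \after x$. Transitivity of $\after$ then yields $x \after x$, contradicting irreflexivity. Hence $x = y$, so $\caus$ is anti-symmetric.

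For the converse ``$\caus$ anti-symmetric $\implies \after$ irreflexive'', I would again argue by contradiction: suppose $x \after x$ for some $x$, \ie{}~there is a causal loop $\gamma\colon [a,b] \to \genericworld$ with $\gamma(a) = \gamma(b) = x$. The key step is to extract from this loop an \emph{intermediate} point $y = \gamma(t^\ast)$ with $t^\ast \in (a,b)$ and $y \neq x$. Restricting $\gamma$ to $[a,t^\ast]$ and to $[t^\ast,b]$ gives a causal curve from $x$ to $y$ and one from $y$ to $x$, so $x \after y$ and $y \after x$; by $\after \implies \caus$ we obtain $x \caus y$ and $y \caus x$, and anti-symmetry forces $x = y$, a contradiction.

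The main obstacle is exactly the existence of such an intermediate point, which is where the non-degeneracy of causal curves becomes essential. A causal tangent vector is by definition non-zero, so the velocity of $\gamma$ never vanishes; hence $\gamma$ is locally injective and non-constant, and for $t$ slightly larger than $a$ we already have $\gamma(t) \neq \gamma(a) = x$, supplying an interior point distinct from $x$. This is also the conceptual reason the statement concerns the strict relation $\after$ (which records causal loops) rather than $\caus$: the latter is always reflexive, so an analogous claim about $\caus$ being irreflexive would be vacuous, and it is precisely the non-degeneracy built into $\after$ that makes anti-symmetry of $\caus$ equivalent to the absence of causal loops.
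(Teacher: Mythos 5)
Your proof is correct and takes essentially the approach the paper intends: the lemma is stated without an explicit proof, and the remark immediately following it abstracts exactly your argument, namely transitivity of $\after$ together with the property that $x \after x$ iff $x \after x' \after x$ for some $x' \neq x$. Your extraction of an intermediate point on the causal loop via the non-vanishing (fd causal, hence non-zero) velocity supplies precisely the geometric justification for that property, which the paper takes for granted.
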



\begin{remark}
    A similar equivalence can be made between chronological spacetimes, where $\chron$ is irreflexive, and spacetimes where $\chroneq$ is anti-symmetric.
    In fact, this correspondence ($\triangleleft$ is irreflexive iff $\trianglelefteq$ is anti-symmetric) holds for any transitive relation~$\triangleleft$ with the property that $x \triangleleft x$ iff $x \triangleleft x' \triangleleft x$ for some $x' \neq x$, and where $\trianglelefteq$ is the reflexive completion of $\triangleleft$.
\end{remark}

Both the chronological and causal properties on the causal ladder are affecting the spacetime in a similar way, but affect different relations.
With the chronological relation, $\chron$, there is a difference between losing reflexivity (being non-totally vicious) and gaining irreflexivity (being chronological).
However, the causal property in the current ladder combines the loss of reflexivity and gaining of irreflexivity on $\alpha$ into one ladder property.
By changing the perspective of the causal property affecting $\after$, we can introduce a class of spacetimes that separates the notion of losing reflexivity and gaining irreflexivity.
Thus, we get a causal analogue of non-total vicious.

\begin{definition}
A spacetime is \emph{causally non-totally vicious (cNTV)} iff $\exists x \in \genericworld$ such that $x \nafter x$.
\end{definition}

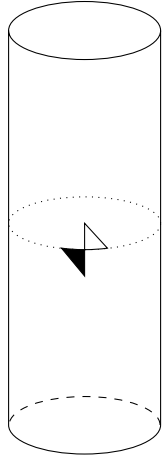
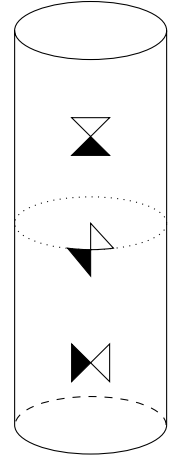
\begin{figure}[t]
    \centering
    \begin{subfigure}[t]{.45\textwidth}
        \centering
        \begin{tikzpicture}
    \node[cylinder, draw, shape border rotate = 90, aspect=3, minimum height = 6cm, minimum width=2cm] (A) {};
    \draw[dashed]
    let \p1 = ($ (A.after bottom) - (A.before bottom) $),
        \n1 = {0.5*veclen(\x1,\y1)-\pgflinewidth},
        \p2 = ($ (A.bottom) - (A.after bottom)!.5!(A.before bottom) $),
        \n2 = {veclen(\x2,\y2)-\pgflinewidth}
  in
    ([xshift=-\pgflinewidth] A.before bottom) arc [start angle=0, end angle=180,
    x radius=\n1, y radius=\n2];

    \node[ellipse,draw,dotted,minimum width=2cm, minimum height=.7cm] at (0,.35) {};
    \newcommand{\lightconedistance}{.35}
    \coordinate (p) {};
    \draw[fill=white] (p) arc (-90:-60:.6 and .15) -- (0,\lightconedistance) -- (p);
    \draw[fill] (p) arc (-90:-120.5:.6 and .15) -- (0,-\lightconedistance) -- (p);
    
\end{tikzpicture}
        \caption{A chronological (non-causal) spacetime that is not cNTV.
        Every point acts lightlike on a slice of the cylinder allowing every point to (causally) loop around to itself, but a point cannot reach itself in a timelike way.}
        \label{fig:cntvchron:chron}
    \end{subfigure}
    \hfill
    \begin{subfigure}[t]{.45\textwidth}
        \centering
        \begin{tikzpicture}









\node[cylinder, draw, shape border rotate = 90, aspect=3, minimum height = 6cm, minimum width=2cm] (A) {};
\draw[dashed]
let \p1 = ($ (A.after bottom) - (A.before bottom) $),
    \n1 = {0.5*veclen(\x1,\y1)-\pgflinewidth},
    \p2 = ($ (A.bottom) - (A.after bottom)!.5!(A.before bottom) $),
    \n2 = {veclen(\x2,\y2)-\pgflinewidth}
in
([xshift=-\pgflinewidth] A.before bottom) arc [start angle=0, end angle=180,
x radius=\n1, y radius=\n2];

\node[ellipse,draw,dotted,minimum width=2cm, minimum height=.7cm] at (0,.35) {};

\newcommand{\lightconedistance}{.35}
\coordinate (p) {};
\draw[fill=white] (p) arc (-90:-60:.6 and .15) -- (0,\lightconedistance) -- (p);
\draw[fill] (p) arc (-90:-120.5:.6 and .15) -- (0,-\lightconedistance) -- (p);

\coordinate (fp) at (0,1.5);
\coordinate (pp) at (0,-1.5);

\draw[fill=white] (fp) -- ([shift={(.25, .25)}]fp) -- ([shift={(-.25, .25)}]fp) -- (fp);
\draw[fill] (fp) -- ([shift={(.25,-.25)}]fp) -- ([shift={(-.25,-.25)}]fp) -- (fp);

\draw[fill=white] (pp) -- ([shift={(.25, .25)}]pp) -- ([shift={(.25,-.25)}]pp) -- (pp);
\draw[fill] (pp) -- ([shift={(-.25, .25)}]pp) -- ([shift={(-.25,-.25)}]pp) -- (pp);
\end{tikzpicture}
        \caption{A cNTV (non-causal) spacetime that is not chronological~\cite[Fig. 10]{minguzzi2019LorentzianCausalityTheory}.
        The points above the boundary are irreflexive in $\alpha$ but the points below the boundary are reflexive in $\after$ and $\chron$.}
        \label{fig:cntvchron:cntv}
    \end{subfigure}
        \caption{Spacetimes for Lemma~\ref{lem:cntvnotchron}.}
    \label{fig:cntvchron}
\end{figure}

The cNTV property makes $\alpha$ lose reflexivity and now the causal property only gives $\alpha$ the irreflexive property, separating the relational properties into individual steps on the causal ladder.
There is an obvious question to ask: where does cNTV sit on the causal ladder?
\begin{proposition}
    A causal spacetime is cNTV, 
    and a cNTV spacetime is NTV. 
    \label{prop:causcntvntv}
\end{proposition}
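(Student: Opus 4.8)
The plan is to establish the two implications separately, each by reducing to a relational fact already proved above. For the first implication (causal $\implies$ cNTV), I would invoke the immediately preceding lemma, which states that a spacetime is causal exactly when $\after$ is irreflexive, that is, $x \nafter x$ holds for \emph{every} $x \in M$. Since a spacetime is by definition built on a (nonempty) smooth manifold, there is at least one point $x \in M$, and any such point witnesses the existential clause $\exists x\, (x \nafter x)$ in the definition of cNTV. So the passage here is really the passage from a universal statement ($\after$ irreflexive) to an existential one (cNTV), which is valid precisely because $M \neq \varnothing$.

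For the second implication (cNTV $\implies$ NTV), I would argue by contraposition at a single point. By definition of cNTV there is some $x \in M$ with $x \nafter x$. Lemma~\ref{lemma:relations between causal orders} gives the chain $x \chron y \implies x \after y$ for all $x,y \in M$; instantiating $y := x$ yields $x \chron x \implies x \after x$, and taking the contrapositive gives $x \nafter x \implies x \nchron x$. Hence the \emph{same} witness $x$ already supplied by cNTV satisfies $x \nchron x$, which is exactly the defining condition for NTV. Thus no new point needs to be produced; the cNTV witness transfers directly.

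I do not anticipate a genuine obstacle, since both steps are one-line consequences of results stated above. The only points requiring a moment of care are bookkeeping rather than mathematical: first, matching the quantifier structure in the definitions (irreflexivity is universal, whereas cNTV and NTV are existential), which is why the nonemptiness of $M$ must be acknowledged explicitly in the first implication; and second, getting the direction of the implication $x \chron y \implies x \after y$ right so that the contrapositive runs from $\nafter$ to $\nchron$ and not the reverse. With these two observations in place the proof is immediate.
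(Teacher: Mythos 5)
Your proof is correct and matches the paper's approach: the paper simply declares both implications ``obvious from the definitions,'' and your write-up is that argument spelled out, using the preceding lemma (causal iff $\after$ irreflexive, with nonemptiness of $M$ converting the universal statement to the existential one) and the chain $x\chron y \implies x\after y$ from Lemma~\ref{lemma:relations between causal orders} to transfer the witness. Your explicit attention to the quantifier mismatch and the direction of the contrapositive is sound bookkeeping that the paper leaves implicit.
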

The proof of these implications is simply based on the definitions of the properties and is obvious. Note further that the spacetimes in Figure~\ref{fig:cntvchron} are counterexamples to the converse: Figure~\ref{fig:cntvchron:chron} is NTV but not cNTV, and Figure~\ref{fig:cntvchron:cntv} is cNTV but not causal.

\subsubsection{Chronological Spacetimes and cNTV}
Given the result in Proposition~\ref{prop:causcntvntv}, the next question to ask is: what is the relation between a spacetime being cNTV and it being chronological?
\begin{theoremrep}
    We have that
    \begin{enumerate}[label=(\roman*)]
        \item there are chronological spacetimes that are not cNTV (Figure~\ref{fig:cntvchron:chron});
        \item there are cNTV spacetimes that are not chronological (Figure~\ref{fig:cntvchron:cntv}).
    \end{enumerate}
    \label{lem:cntvnotchron}
\end{theoremrep}
\begin{proof}
The proof of both can be observed in the spacetimes depicted in Figure~\ref{fig:cntvchron}, where each has one property but not the other. Both spacetimes use the same manifold, $\mathbb{R} \times S^{1}$. The metric of Figure~\ref{fig:cntvchron:chron} is just quotiented Minkowski space, and Figure~\ref{fig:cntvchron:cntv} is Misner spacetime from Example~\ref{ex:Misner spacetime}.
\end{proof}

The spacetime given in Figure~\ref{fig:cntvchron:cntv} was shown to be NTV and not chronological in \cite{minguzzi2019LorentzianCausalityTheory}, but here we can see that it is also cNTV.

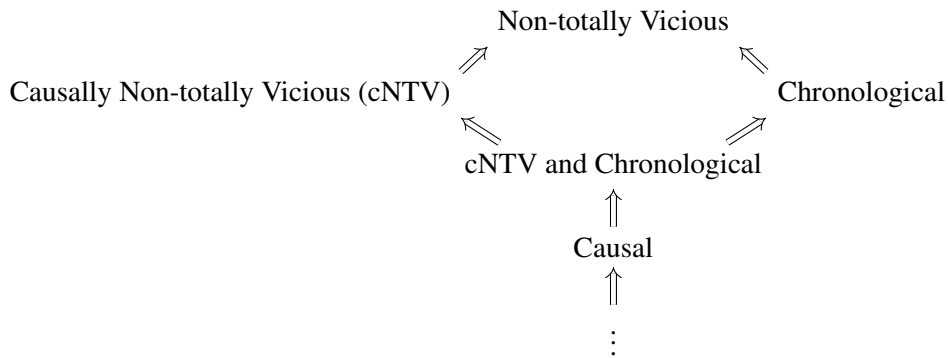
\begin{figure}[t]
    \centering
    \begin{tikzpicture}[
        every path/.style={implies-, double equal sign distance}, 
        every node/.style={}, 
        node distance = 5mm and 5mm,
    ]
        \node[] (ntv) {Non-totally Vicious};
        \node[right = of bntv, minimum width = 2.5cm] (chron) {Chronological};
        \node[left = of bntv, minimum width = 2.5cm] (cntv) {Causally Non-totally Vicious (cNTV)};
        \node[below = of bntv] (cntvchron) {cNTV and Chronological};
        \node[below = of cntvchron] (caus) {Causal};
        \node[below = of caus] (dist) {$\vdots$};
        
        \draw (ntv.south east) -- (chron.north west);
        \draw (ntv.south west) -- (cntv.north east);
        \draw (chron.south west) -- ([shift={(1.5,0)}]cntvchron.north);
        \draw (cntv.south east) -- ([shift={(-1.5,0)}]cntvchron.north);
        \draw (cntvchron) -- (caus);
        \draw (caus) -- (dist);
    \end{tikzpicture}
    \caption{New version of the NTV end of the causal ladder. Again, all implications are strict.}
    \label{fig:newcausalladder}
\end{figure}

\begin{remark}
\label{rmk:cntvchron}
It is clear that a causal spacetime is cNTV and chronological, but we further have that a spacetime that is cNTV and chronological is not necessarily causal.
A counterexample can be observed by taking the spacetime in Figure~\ref{fig:cntvchron:chron} and removing a point from it.
All points on the same circular slice as the removed point will be irreflexive in $\alpha$ since the hole prevents any causal path continuously circling around, but other points will remain reflexive.
The spacetime will remain chronological and become cNTV, but not be causal.
\end{remark}

The cNTV property changes one end of the causal ladder to look different to the original ladder given in Figure~\ref{fig:causalladder}.
The new end of the causal ladder is depicted in Figure~\ref{fig:newcausalladder}.
In Section~\ref{sec:modalcausalladder}, we will see how introducing the cNTV property into the causal ladder will make a clearer distinction of the modal logic of a spacetime to its position on the causal ladder.

\shelf{comment on topological/spacetime variant of cNTV/chronological?}

\subsubsection{Reflecting Spacetimes and cNTV}
We make a brief aside on another interesting property that affects a spacetime's position on the causal ladder.
\begin{definition}
    A spacetime $M$ is called \emph{reflecting} if for all $x,y\in M$ we have $I^{+}(x) \supseteq I^{+}(y)$ if and only if $I^{-}(x) \subseteq I^{-}(y)$.
\end{definition}

These have been studied in \cite{hawking1974CausallyContinuousSpacetimes, clarke1988ReflectingSpacetimes} and there are many equivalent conditions for a spacetime to be reflecting.
As concluded in \cite{clarke1988ReflectingSpacetimes}, the reflecting condition is ``only slightly weaker than global hyperbolicity''.
In \cite[Proposition 4.26]{minguzzi2019LorentzianCausalityTheory} it is proven that any reflecting NTV spacetime has to be chronological.
The following shows that the causal analogue of this statement does not hold.
\begin{lemmarep}
    There are reflecting cNTV spacetimes that are not causal.
\end{lemmarep}
\begin{proof}
    Consider the spacetime from Figure~\ref{fig:cntvchron:chron}.
    We use \cite[Theorem~2.1]{clarke1988ReflectingSpacetimes}, which states that removing regions with sufficiently small Hausdorff dimension ($< 3$) from a reflecting spacetime does not affect the reflecting property.
    The spacetime in Figure~\ref{fig:cntvchron:chron} is clearly reflecting, and by \cite[Theorem~2.1]{clarke1988ReflectingSpacetimes} we obtain another reflecting spacetime by removing a single point (points have a Hausdorff dimension of 0).
    The resulting spacetime is cNTV, as explained in Remark~\ref{rmk:cntvchron}, yet remains non-causal.
\end{proof}

While the cNTV property is unaffected by a spacetime being reflecting, a causal analogue of the reflecting property may affect it.
We leave it to future work to study more intricate relations between cNTV and the rest of the causal ladder.

\section{Modal Logic}
\label{sec:modal}
Now we introduce modal logic, which includes the standard logical connectives (conjuction, disjunction, \etc), atomic propositions, and modal operators ($\necc, \poss$).
Throughout, we assume a finite set of atomic propositions, $\atomicprops = \{p_1, p_2, \dots\}$.
We refer to \cite{Blackburn2001,BoxesAndDiamonds} for common definitions and theorems.


We begin by introducing the notion of a Kripke frame.
\begin{definition}
    A \emph{(Kripke) frame} is a tuple $\genericframe$, consisting of
    \begin{itemize}
        \item a set of points/worlds, $\genericworld$;
        \item a relation between points, ${\genericrelation} \subseteq \genericworld \times \genericworld$.
    \end{itemize}
\end{definition}

We may then also attach an evaluation function to relate atomic propositions to points within a frame to make a model.

\begin{definition}
    A \emph{model} is a triple $\genericmodel$, consisting of a frame $\genericframe$ together with an \emph{evaluation function} $\genericeval: \atomicprops \to 2^{\genericworld}$ that assigns atomic propositions to the points that they hold in.
\end{definition}

For a relation, $\genericrelation$, let $\genericrelation^{-1} = \{ (y,x) : (x, y) \in~\genericrelation\}$.
For $x \in \genericworld$, let $\genericrelation(x) = \{y \in \genericworld : x\genericrelation y\}$; and for $S \subseteq \genericworld$, let $\genericrelation(S) = \bigcup_{x \in S} \genericrelation(x)$.

\subsection{Grammar}
Standard modal logic is defined by the grammar
\begin{equation*}
\begin{aligned}
    \phi ::=~& \top \mid p \in \atomicprops \mid \phi_1 \lor \phi_2 \mid \lnot \phi \mid \necc p.
\end{aligned}
\end{equation*}
The grammar consists of truth ($\top$), logical or ($\lor$), logical not ($\lnot$) and the ``necessary'' operator ($\necc$).
Other logical connectives are defined in the usual way: falsity ($\bot$), logical and ($\land$), implication ($\Rightarrow$), equivalence ($\Leftrightarrow$).
Additionally, the ``possibility'' operation ($\poss$) is the dual of $\necc$:
\begin{equation*}
    \lnot \necc \lnot \phi \equiv \poss \phi.
\end{equation*}

The semantics of modal logic on a model, $\genericmodel$, at a point, $x \in \genericworld$, is defined recursively on the terms of the modal formula:
\begin{equation*}
    \begin{aligned}
        & \mentailgeneric{\top},~\forall x \in \genericworld;
        \\ & \mentailgeneric{p~(\in \atomicprops)} \text{ iff } x \in \genericeval(p);
        \\ & \mentailgeneric{\phi_1 \lor \phi_2} \text{ iff } \mentailgeneric{\phi_1} \text{ or } \mentailgeneric{\phi_2};
        \\ & \mentailgeneric{\lnot\phi} \text{ iff not } \mentailgeneric{\phi};
        \\ & \mentailgeneric{\necc \phi} \text{ iff } \forall y \in \genericworld \text{ such that } x \genericrelation y, \mentail{\genericmodel}{y}{\phi}.
    \end{aligned}
\end{equation*}

By removing one of the components of the model or the point from the left hand side of the semantic relation means the property holds for all possible instances of the component.
For example, ${\mentail{\genericmodel}{}{\phi}}$ means $\forall x \in \genericworld$, $\mentailgeneric{\phi}$ and we write $\mentail{\genericframe}{}{\phi}$ when $\phi$ holds on all evaluation functions and points.
Throughout, we write $|[\phi|]_{\genericmodel}$ to denote the set of points that entail a property $\phi$ in a model $\genericmodel$, \ie{} $|[\phi|]_{\genericmodel} = \{ x \in \genericworld : \mentailgeneric{\phi} \}$.

\subsection{Normal Modal Logics}
The Necessitation rule is
\begin{prooftree}
\AxiomC{A}
\LeftLabel{(Nec)}
\RightLabel{,}
\UnaryInfC{$\necc$A}
\end{prooftree}
and the simplest axiom that holds in all modal logics is the modal formula
\begin{equation*}
    K := \necc (A \rightarrow B) \rightarrow (\necc A \rightarrow \necc B).
\end{equation*}
Other common modal formula are defined along with their corresponding properties of frames:
\begin{equation*}
    \begin{aligned}
        & a4 := \poss\poss A \rightarrow \poss A, & \text{transitivity}; \\
        & aT := A \rightarrow \poss A, & \text{reflexivity}; \\
        & aD := \poss \top, & \text{seriality}; \\
        & ad := \poss A \rightarrow \poss\poss A, & \text{density}; \\
        & ad_2 := \poss A \land \poss B \rightarrow \poss(\poss A \land \poss B), & \text{2-density}; \\
        & a2 := \poss\necc A \rightarrow \necc \poss A, & \text{confluence.\footnotemark} \\
    \end{aligned}
\end{equation*}
\footnotetext{Confluence is also referred to as the Church-Rosser property, the diamond property, or being weakly directed.}

The smallest modal logic is $\mK$, which contains the standard propositional logic axioms, \emph{modus ponens}, $K$, and (Nec).
A normal modal logic is the smallest modal logic that consists of $\mK$; any extra (valid) formulae $a_1, a_2 \dots$; and any formula that can be obtained from applying the rules in $\mK$ on the formulae given.
The respective logic is denoted $\mK + a_1 + a_2 +\dots$~.

We denote some normal modal logics derived from the formulas given above:
\begin{equation*}
    \begin{aligned}
        & \mathbf{K4} := \mK + a4, & \mathbf{T} := \mK + aT, \\
        & \mathbf{D4} := \mathbf{K4}+ aD, & \mathbf{S4} := \mathbf{K4} + aT, \\
        & \mathbf{OI} := \mathbf{D4} + ad_2, & \bm{\Lambda.2} := \bm{\Lambda} + a2;
    \end{aligned}
\end{equation*}
where $\bm{\Lambda}$ is an arbitrary normal modal logic.
Each of these has a combination of the properties of the corresponding modal formula.

As standard, for a set of formulae, $\Gamma$, and a formula, $\phi$, let $\Gamma \vdash \phi$ mean that $\phi$ can be logically derived from $\Gamma$ ($\phi \in \mK + \Gamma$) and $\Gamma \nvdash \phi$ iff $\Gamma \vdash \lnot \phi$.

For a frame, $F = \genericframe$, we write $\modal{F}$ to mean the set of formula that are satisfied under all possible evaluations of $F$.
This set of formula could be some logic, $\bm{\Lambda}$, and we may write $\modal{F} = \bm{\Lambda}$.

\subsection{Bisimulation}
\label{sec:bisim}
\change{We recall the notion of bisimulation, needed within Section~\ref{sec:modalcausalladder}.}

\begin{definition}
Let $\mathcal{M} = \genericmodel$ and $\mathcal{M}' = \mmodel{\genericworld'}{\genericrelation'}{\genericeval'}$ be models.
A binary relation $\varnothing \subset Z \subseteq \genericworld \times \genericworld'$ is a bisimulation between $\mathcal{M}$ and $\mathcal{M}'$ if
\begin{enumerate}[label=(\roman*)]
    \item $w\genericrelation v$ and $w Z w'$, then there exists $v' \in \genericworld'$ such that $v Z v'$ and $w' \genericrelation' v'$;
    \item $w' \genericrelation v'$ and $w Z w'$, then there exists $v \in \genericworld$ such that $v Z v'$ and $w \genericrelation v$;
    \item $w \in \genericeval(p)$ and $wZw'$, then $w' \in \genericeval'(p)$.
\end{enumerate}
\end{definition}

\begin{theorem}
Modal formulae are invariant under bisimulation.
\end{theorem}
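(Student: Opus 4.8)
The plan is to prove the biconditional $\mentail{\mathcal{M}}{w}{\phi}$ iff $\mentail{\mathcal{M}'}{w'}{\phi}$ for \emph{every} pair $(w,w') \in Z$ and every formula $\phi$ simultaneously, by structural induction on $\phi$. The crucial framing point is that the induction hypothesis must range over all pairs in the bisimulation $Z$, not just a fixed initial one, so that it can later be invoked at matched successor pairs that arise in the modal case.

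First I would dispatch the base and propositional cases. For $\phi = \top$ both sides hold by definition of the semantics. For $\phi = p \in \atomicprops$, the equivalence $w \in V(p)$ iff $w' \in V'(p)$ is exactly the atomic-harmony clause (iii) of a bisimulation (together with its symmetric counterpart for the reverse implication). The Boolean cases $\lnot\psi$ and $\psi_1 \lor \psi_2$ then follow purely from the induction hypothesis applied to the immediate subformulae at the \emph{same} pair $(w,w')$, since the semantics of $\lnot$ and $\lor$ are defined pointwise and introduce no new worlds.

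The heart of the argument is the modal case $\phi = \necc\psi$. Suppose $\mentail{\mathcal{M}}{w}{\necc\psi}$ and let $v'$ be any successor of $w'$, \ie{} $w' \genericrelation' v'$. Applying the back clause (ii) to $w Z w'$ and $w' \genericrelation' v'$, I obtain a witness $v \in \genericworld$ with $w \genericrelation v$ and $v Z v'$. From $\mentail{\mathcal{M}}{w}{\necc\psi}$ and $w \genericrelation v$ we get $\mentail{\mathcal{M}}{v}{\psi}$, and since $(v,v') \in Z$ the induction hypothesis \emph{at the matched pair} yields $\mentail{\mathcal{M}'}{v'}{\psi}$. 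As $v'$ was an arbitrary successor of $w'$, this gives $\mentail{\mathcal{M}'}{w'}{\necc\psi}$. The reverse implication is symmetric, using the forth clause (i) to transport an arbitrary successor $v$ of $w$ to a matching successor $v'$ of $w'$. Since $\poss$ is definable as $\lnot\necc\lnot$, no separate case is needed for the diamond.

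The main obstacle, and the only step that genuinely exploits the relational structure of $Z$, is this modal case: one must correctly pair up successors across the two models using the back/forth conditions, and then apply the induction hypothesis at the matched successor pair rather than at the original pair. Getting the quantifier alternation right---``for every successor on one side, produce a matched successor on the other''---is precisely where the forth clause (i) and back clause (ii) of the bisimulation are exactly the ingredients required.
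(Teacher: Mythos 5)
Your proof is correct and is the standard structural-induction argument; the paper itself states this theorem without proof, deferring to the cited references (Blackburn et al.), and your argument is exactly the canonical one found there, with the back clause (ii) and forth clause (i) correctly deployed at the matched successor pair in the $\necc$ case. One point worth noting: you rightly observe that the atomic case needs the full harmony $w \in V(p)$ iff $w' \in V'(p)$, whereas the paper's clause (iii) as literally written is only one-directional --- under that weaker clause the theorem would fail (take $\phi = \lnot p$), so your parenthetical ``together with its symmetric counterpart'' is in fact repairing a small omission in the paper's definition rather than merely restating it.
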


\subsection{Irreflexivity}
A final topic we introduce within modal logic is Gabbay's Irreflexive Rule~\cite{Gabbay1981}.
It is well known that there is no modal formula that can characterise irreflexive frames ($\forall x, \lnot x \genericrelation x$).
However, Gabbay found that there is a rule that exists, which when added to a normal modal logic characterises the irreflexive frames of that class (except with reflexive frames).
The rule is
\begin{prooftree}
\AxiomC{$\lnot (p \rightarrow \poss p) \rightarrow \phi$}
\LeftLabel{$\gabb$}
\RightLabel{if $p$ does not occur in $\phi$.}
\UnaryInfC{$\phi$}
\end{prooftree}
For a normal modal logic $\bm{\Lambda}$ without reflexivity ($aT$), let $\bm{\Lambda} + \gabb$ be the smallest modal logic of $\bm{\Lambda}$ with the addition of being closed under $\gabb$.
A frame whose logic is $\bm{\Lambda} + \gabb$ is irreflexive.

\section{Spacetime and Modal Logic}
\label{sec:modalspacetime}
We begin by looking at modal logic within the context of a generic spacetime.
No assumptions are made on where the spacetime lies on the causal ladder.
We analyse the common sets in spacetimes, as well as the normal modal logics that act as a subset for the modal logic of spacetimes.

\subsection{Past and Future}
We make some remarks on the modal operators and what properties they represent.
\begin{lemma}
$|[\poss \phi|]_{\mmodel{\genericworld}{\chron}{\genericeval}} = I^{-}(|[\phi|]_{\mmodel{\genericworld}{\chron}{\genericeval}})$
and
$|[\poss \phi|]_{\mmodel{\genericworld}{\caus}{\genericeval}} = J^{-}(|[\phi|]_{\mmodel{\genericworld}{\caus}{\genericeval}})$
\end{lemma}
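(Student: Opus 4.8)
The plan is to prove both equalities by directly unwinding the semantics of the possibility modality $\poss$ and comparing the result with the definition of the (chronological, resp.\ causal) past cone. The two statements are structurally identical, differing only in whether the underlying relation of the model is $\chron$ or $\caus$ and whether the associated cone is $I^{-}$ or $J^{-}$, so I would prove the chronological case in full and then remark that the causal case follows \emph{verbatim} with $\chron$ replaced by $\caus$ and $I^{-}$ by $J^{-}$.

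First I would record the derived semantic clause for $\poss$. Since $\poss\phi \equiv \lnot\necc\lnot\phi$, combining this with the semantic clause for $\necc$ yields, for the model $\mmodel{\genericworld}{\chron}{\genericeval}$ at a point $x$, that $\mentail{\mmodel{\genericworld}{\chron}{\genericeval}}{x}{\poss\phi}$ holds iff there exists $y \in \genericworld$ with $x \chron y$ and $\mentail{\mmodel{\genericworld}{\chron}{\genericeval}}{y}{\phi}$. Rephrasing this in terms of the denotation notation, a point $x$ lies in $|[\poss\phi|]_{\mmodel{\genericworld}{\chron}{\genericeval}}$ exactly when there is some $y \in |[\phi|]_{\mmodel{\genericworld}{\chron}{\genericeval}}$ with $x \chron y$.

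Next I would unwind the right-hand side. Reading $I^{-}$ applied to a set $S$ as $I^{-}(S) = \bigcup_{y \in S} I^{-}(y)$ --- the pointwise definition $I^{-}(y) = \{z : z \chron y\}$ extended to sets in the same pointwise manner already fixed for relations --- a point $x$ lies in $I^{-}(|[\phi|]_{\mmodel{\genericworld}{\chron}{\genericeval}})$ exactly when there is some $y \in |[\phi|]_{\mmodel{\genericworld}{\chron}{\genericeval}}$ with $x \in I^{-}(y)$, that is, with $x \chron y$. This is precisely the condition obtained in the previous step, so the two sets coincide; the causal case is identical.

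The argument is essentially a definitional chase, so I do not expect a serious obstacle. The one point that genuinely requires care is the \emph{direction} of the relation: the modality $\poss$ looks forward along $\chron$ (it asserts the existence of a $\phi$-witness $y$ with $x \chron y$, i.e.\ in the future of $x$), whereas the past cone $I^{-}$ collects the points lying before a given point. These two readings are consistent exactly because the set of points that can see a $\phi$-point in their future is the union of the pasts of the $\phi$-points. I would state this matching of the future-looking modality with the past cone explicitly, so that the reader does not mistake it for a sign error.
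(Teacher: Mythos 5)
Your proposal is correct and is exactly the argument the paper has in mind: the paper's proof is the one-line remark that the claim is ``obvious from the semantics of $\poss$ and the definition of $I^{-}$ ($J^{-}$)'', and your definitional chase --- matching $x \in |[\poss\phi|]$ with the existence of some $y \in |[\phi|]$ satisfying $x \chron y$, i.e.\ $x \in I^{-}(y)$ --- is precisely that argument written out, including the correct reading of $I^{-}$ on sets as the pointwise union fixed earlier for relations. Your explicit note that the future-looking modality pairs with the \emph{past} cone is a helpful clarification, not a deviation.
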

\begin{proof}
    Obvious from the semantics of $\poss$ on $\chron (\caus)$ and the definition of $I^{-}~(J^{-})$.
\end{proof}

\begin{corollary}
$|[\necc \phi|]_{\mmodel{\genericworld}{\chron}{\genericeval}} = M \backslash I^{-}(M \backslash |[\phi|]_{\mmodel{\genericworld}{\chron}{\genericeval}})$ (similarly for $\caus, J^{-}$ respectively).
\end{corollary}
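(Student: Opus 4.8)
The plan is to reduce everything to the preceding lemma by exploiting the fact that $\necc$ is the dual of $\poss$. Recall that $\necc \phi$ abbreviates $\lnot \poss \lnot \phi$, so the truth set of $\necc \phi$ can be rewritten purely in terms of the diamond modality and Boolean negation, for both of which the semantics are already pinned down.

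First I would unfold the duality at the level of truth sets. Writing $\mathcal{M} = \mmodel{\genericworld}{\chron}{\genericeval}$ for brevity, the semantics of negation give $|[\lnot \psi|]_{\mathcal{M}} = M \backslash |[\psi|]_{\mathcal{M}}$ for any formula $\psi$, since a point satisfies $\lnot \psi$ exactly when it fails to satisfy $\psi$. Applying this with $\psi = \poss \lnot \phi$ yields
\[
|[\necc \phi|]_{\mathcal{M}} = |[\lnot \poss \lnot \phi|]_{\mathcal{M}} = M \backslash |[\poss \lnot \phi|]_{\mathcal{M}}.
\]
Next I would invoke the preceding lemma, which states $|[\poss \psi|]_{\mathcal{M}} = I^{-}(|[\psi|]_{\mathcal{M}})$. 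Taking $\psi = \lnot \phi$ and applying the negation clause once more gives $|[\poss \lnot \phi|]_{\mathcal{M}} = I^{-}(|[\lnot \phi|]_{\mathcal{M}}) = I^{-}(M \backslash |[\phi|]_{\mathcal{M}})$. Substituting this into the display above produces exactly $|[\necc \phi|]_{\mathcal{M}} = M \backslash I^{-}(M \backslash |[\phi|]_{\mathcal{M}})$, as claimed. The causal case is identical: replace $\chron$ by $\caus$ and $I^{-}$ by $J^{-}$ throughout, using the corresponding half of the lemma.

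There is no genuine obstacle here, as the result is a formal consequence of the lemma together with the modal duality. The only care needed is to track the two nested complements correctly and to confirm that the negation clause of the semantics commutes with the truth-set notation exactly as used in the first step.
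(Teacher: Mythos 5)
Your proof is correct and is precisely the intended argument: the paper leaves this corollary unproved as an immediate consequence of the preceding lemma, and your derivation via the duality $\necc\phi \equiv \lnot\poss\lnot\phi$ together with the negation clause of the semantics is the standard route it relies on. Nothing is missing.
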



\subsection{Logic of Spacetime}

Given the definitions of the various causal relations, the relational properties they have (known from Lemma~\ref{lem:relcommonproperties}), and the properties on frames that normal modal logics describe; we have the following results on all spacetimes.
\begin{corollary}
    Let $\genericworld$ be a spacetime. Then, we have that
    \begin{enumerate}[label=(\roman*)]
        \item $\mathbf{S4} \subseteq \modal{\mframe{\genericworld}{\chroneq})}, \modal{\mframe{\genericworld}{\caus}}$;
        \item $\mathbf{OI} \subseteq \modal{\mframe{\genericworld}{\chron}}$;
        \item $\mathbf{D4} + ad \subseteq \modal{\mframe{\genericworld}{\after}}$.
    \end{enumerate}
    \label{cor:spacetimeclass}
\end{corollary}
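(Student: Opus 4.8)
The plan is to treat each inclusion as a soundness statement and reduce it to checking frame conditions via the standard correspondence between the listed modal axioms and relational properties. Recall that a normal modal logic $\mK + a_1 + \cdots + a_k$ is contained in $\modal{F}$ precisely when each generating axiom $a_i$ is valid on $F$: the base logic $\mK$ is valid on every frame, and frame validity is preserved by the $\mK$-rules (modus ponens, uniform substitution, and (Nec)), so once the generators are valid the whole logic is. Thus it suffices to verify, for each frame, the frame property corresponding to each generating axiom, namely $a4 \leftrightarrow$ transitivity, $aT \leftrightarrow$ reflexivity, $aD \leftrightarrow$ seriality, $ad \leftrightarrow$ density, and $ad_2 \leftrightarrow$ 2-density.

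For (i), $\mathbf{S4} = \mK + a4 + aT$ requires transitivity and reflexivity; both $\chroneq$ and $\caus$ are transitive and reflexive by Lemma~\ref{lem:relcommonproperties}, so each generating axiom is valid and the inclusion follows for both frames. For (ii), $\mathbf{OI} = \mK + a4 + aD + ad_2$ requires transitivity, seriality, and 2-density; by Lemma~\ref{lem:relcommonproperties} the relation $\chron$ is transitive and semi-full, and semi-fullness is by definition exactly seriality together with 2-density, so all three properties hold on $\mframe{\genericworld}{\chron}$.

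For (iii), $\mathbf{D4} + ad = \mK + a4 + aD + ad$ requires transitivity, seriality, and density. Transitivity and seriality of $\after$ are again supplied by Lemma~\ref{lem:relcommonproperties}. The one property not recorded there is density of $\after$, and establishing it is the crux of the argument. Here I would appeal directly to the path definition of the relation: if $x \after y$ then by definition there is a causal curve $\gamma\colon[a,b]\to\genericworld$ with $\gamma(a)=x$ and $\gamma(b)=y$; choosing any interior parameter $c\in(a,b)$ and setting $z=\gamma(c)$, the restrictions $\gamma|_{[a,c]}$ and $\gamma|_{[c,b]}$ are again causal curves, witnessing $x \after z$ and $z \after y$. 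Hence $\after$ is dense, $ad$ is valid on $\mframe{\genericworld}{\after}$, and the inclusion follows.

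The main obstacle is precisely this density step: unlike the other properties it is not contained in the earlier lemmas and must be extracted from the underlying curve structure rather than read off from the abstract relational axioms. The only points to handle with care are that the midpoint construction works uniformly whether $x\neq y$ or $x \after x$ arises from a causal loop (a non-degenerate curve always has interior parameters to split at), and that the restricted sub-curves remain causal, which is immediate since being a causal curve is a pointwise condition on the velocity tangent vectors that is inherited by any restriction.
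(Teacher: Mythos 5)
Your proposal is correct and follows essentially the same route as the paper, which likewise reduces each inclusion to the standard correspondence between the generating axioms ($a4$, $aT$, $aD$, $ad$, $ad_2$) and the relational properties supplied by Lemma~\ref{lem:relcommonproperties}. Your only addition is to spell out density of $\after$ via splitting a causal curve at an interior parameter --- a detail the paper leaves implicit but which your argument handles correctly, including the causal-loop case.
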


This can be seen simply from the fact that the normal modal logics yield frames that have the associated properties and the frames of the spacetime with the causal relations have those properties.
For instance, $\mathbf{S4}$ yields frames that are transitive and reflexive, and $\caus$ is a transitive and reflexive relation.

\subsubsection{The After Formula}
In \cite{Shapirovsky2005}, it was shown that the formula, which we call the \emph{after formula},
\begin{equation*}
    \afterformula := \poss ( \poss (p_1 \land \lnot p_2 \land \necc \lnot p_2)
                \land \poss (p_2 \land \lnot p_1 \land \necc \lnot p_1))
                \land \poss q
                \Rightarrow
                \poss (\poss p_1 \land \poss q) \lor \poss (\poss p_2 \land \poss q),
\end{equation*}
holds in the modal logic of any $1+n$-dimensional Minkowski spacetime with the $\after$ relation.
This formula corresponds to the first-order property:
\begin{equation}
\begin{aligned}
    \forall x, y, y_1, y_2, z:
    &\Big[ \left(x \genericrelation y \land y \genericrelation y_1 \land y \genericrelation y_2 \land x \genericrelation z
    \land y_1 \neq y_2 \land \lnot(y_1  \genericrelation  y_2) \land \lnot(y_2  \genericrelation  y_1) \right)
    \\ &\rightarrow
    \exists t \left(x \genericrelation t \land t \genericrelation z \land (t \genericrelation y_1 \lor t \genericrelation y_2 ) \right) \Big].
    \footnotemark
\end{aligned}
\end{equation}
\footnotetext{Note $y_1 \neq y_2 \land \lnot(y_1  \genericrelation  y_2) \land \lnot(y_2  \genericrelation  y_1) \equiv y_1 \genericrelation^{\bowtie} y_2$ in \cite{Shapirovsky2005}.}
Observe that $\afterformula$ is a modified version of 3-density: if two (unrelated) points are related by a common point from the root and there is a third point that can be reached from the root, then at least one of the two points and the third point must share a common point from the root.
In fact, $\afterformula$ is a Sahlqvist formula (see \cite{Blackburn2001}), which provides some properties.

We make the following observation of $\afterformula$ within the scope of normal modal logics:
\begin{lemmarep}
    We have
    \begin{enumerate}[label=(\roman*)]
        \item $\mathbf{D4.2} + ad \nvdash \afterformula$; \label{lem:aaf:d4.2d}
        \item $\mathbf{T} \nvdash \afterformula$; \label{lem:aaf:t}
        \item $\mathbf{K4} + ad_2 \vdash \afterformula$. \label{lem:aaf:k4d2}
    \end{enumerate}
    \label{lem:afterformula}
\end{lemmarep}
\begin{proof}
Claims~\ref{lem:aaf:d4.2d} and \ref{lem:aaf:t} are proven by models that act as counterexamples given in Figures~\ref{fig:afnotin:d4d} and \ref{fig:afnotin:T} respectively.

For~\ref{lem:aaf:k4d2}, let $\mathcal{M} = \genericmodel$ be a $\mathbf{K4} + ad_2$ model and let $x \in \genericworld$.
If we have that
\begin{equation*}
\mentail{\mathcal{M}}{x}{\poss ( \poss (p_1 \land \lnot p_2 \land \necc \lnot p_2) \land \poss (p_2 \land \lnot p_1 \land \necc \lnot p_1)) \land \poss q};
\end{equation*}
then $\mentail{\mathcal{M}}{x}{\poss p_1}$ (by transitivity and properties of $\land$).
Since we have $\mentail{\mathcal{M}}{x}{\poss p_1 \land \poss q}$, then by 2-density we must have that $\mentail{\mathcal{M}}{x}{\poss(\poss p_1 \land \poss q)}$ and therefore $\mentail{\mathcal{M}}{x}{\afterformula}$.
Since $x, \mathcal{M}$ are generic, then $\mathbf{K4} + ad_2 \vdash \afterformula$.
\end{proof}

\begin{figure}[t]
    \centering
    \begin{subfigure}{.4\textwidth}
    \centering
    \begin{tikzpicture}[nodes={draw, circle}, ->,]
    \node (s) {} [grow'=up]
        child {node[refl] (0) {}
            child {node (00) [label=left:{$p_1$}] {}}
            child {node (01) [label=left:{$p_2$}] {}
                child {node[refl] (T) {}}
            }
        }
        child {node[refl] (1) [label=right:{$q$}] {}
        };

    \draw (00) -- (T);
    \draw (1) -- (T);
    \end{tikzpicture}
    \caption{A frame that is transitive, serial, dense, and confluent ($\mathbf{D4.2} + ad$).}
    \label{fig:afnotin:d4d}
    \end{subfigure}
    \hfill
    \begin{subfigure}{.4\textwidth}
    \centering
    \begin{tikzpicture}[nodes={draw, circle}, ->,]
    \node[refl] (s) {} [grow'=up]
        child {node[refl] (0) {}
            child {node[refl] (00) [label=left:{$p_1$}] {}}
            child {node[refl] (01) [label=left:{$p_2$}] {}}
        }
        child {node[refl] (1) [label=left:{$q$}] {}};
    \end{tikzpicture}
    \caption{A reflexive frame where the relation is non-transitive ($\mathbf{T}$).}
    \label{fig:afnotin:T}
    \end{subfigure}
    \caption{Counter models to $\afterformula$ in specific normal modal logics.}
    \label{fig:afnotin}
\end{figure}

Note that reflexivity implies 2-density.
Additionally, since $\mathbf{D4.2} + ad \nvdash \afterformula$, then any sub-logics of $\mathbf{D4.2} + ad$ ($\mathbf{D4}$, $\mathbf{K4}$, \dots) do not contain $\afterformula$.
Conversely, any logics that contain $\mathbf{K4} + ad_2$ as a sub-logic, such as $\mathbf{S4}$, contain $\afterformula$.
Therefore, since $\afterformula$ holds in transitive and 2-dense (or reflexive) frames, then it holds in frames that use a spacetime and one of $\chron$, $\chroneq$ or $\caus$.

Let $\afterlogic := \mathbf{D4} + ad + \afterformula$.\footnote{In \cite{Shapirovsky2005}, the logic $\mathbf{L\alpha_0} = \afterlogic\mathbf{.2} = \mathbf{D4.2} + ad + \afterformula$ is used for an analysis on Minkowski spacetime.}
Since $ad$ and $\afterformula$ are Sahlqvist formula, then by the Sahlqvist Theorem~\cite{Blackburn2001} canonicty is achieved for the formulas and $\afterlogic$.
\begin{corollary}
    The logic $\afterlogic$ is canonical.
\end{corollary}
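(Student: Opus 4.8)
The plan is to apply the Sahlqvist Canonicity Theorem~\cite{Blackburn2001}, which guarantees that any normal modal logic axiomatised entirely by Sahlqvist formulas is canonical. The corollary is therefore reduced to exhibiting a generating set of axioms for $\afterlogic$ all of whose members lie in the Sahlqvist fragment, after which the theorem delivers canonicity with no further work. This is precisely why the statement is a corollary rather than a theorem: the substantive content has already been discharged in the surrounding discussion.

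First I would unfold the definition. Since $\mathbf{D4} = \mathbf{K4} + aD = \mK + a4 + aD$, we have $\afterlogic = \mK + a4 + aD + ad + \afterformula$, so the logic is generated over $\mK$ by the four axioms $a4$, $aD$, $ad$, and $\afterformula$. Next I would verify the Sahlqvist shape of each generator. The transitivity axiom $a4 = \poss\poss A \rightarrow \poss A$ and the density axiom $ad = \poss A \rightarrow \poss\poss A$ are implications whose consequents are positive and whose antecedents contain no box applied to a positive subformula, hence are Sahlqvist; the seriality axiom $aD = \poss\top$ is positive (equivalently $\top \rightarrow \poss\top$) and thus trivially Sahlqvist. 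For $\afterformula$ I would simply invoke the observation made above that it is a Sahlqvist formula.

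The main point requiring care is the syntactic classification of $\afterformula$ itself: its antecedent nests diamonds over conjunctions that include the boxed-literal subformulas $\necc\lnot p_1$ and $\necc\lnot p_2$, and one must confirm that these occur only as admissible boxed-atom (or negative) components within a Sahlqvist antecedent, while the consequent $\poss(\poss p_1 \land \poss q) \lor \poss(\poss p_2 \land \poss q)$ is positive. This is exactly the potential obstacle, but it has been settled earlier in the text, so I would cite that rather than re-derive it. With all four generators confirmed Sahlqvist, the Sahlqvist theorem applies to the full axiomatisation and yields that $\afterlogic$ is valid on its canonical frame, i.e.\ canonical, completing the argument.
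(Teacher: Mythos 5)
Your proposal is correct and follows essentially the same route as the paper: the paper's argument is precisely that $ad$ and $\afterformula$ are Sahlqvist formulas, so the Sahlqvist Theorem yields canonicity of $\afterlogic$. Your only addition is the explicit (and accurate) verification that the remaining generators $a4$ and $aD$ are also Sahlqvist, a step the paper leaves implicit since these are standard.
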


\subsubsection{Intermezzo: Geodesic Normal Coordinates}
\noindent
In this intermezzo we define and collect some technical results from causality theory that are needed to prove the after formula holds in general spacetimes. We refer to \cite[\S 5]{landsman2021FoundationsGeneralRelativity} for details.

Given any subset $A\subseteq M$, we write $x\chron_A y$ if there exists a fd timelike curve from $x$ to $y$ that lies entirely within $A$. Clearly $x\chron_A y$ implies~$x\chron y$. Define~$\caus_A$ and~$\after_A$ similarly.

A curve $\gamma$ is called a \emph{geodesic} if it is length extremising (technically: $\nabla_{\dot\gamma}\dot\gamma=0$ with respect to the Levi-Civita connection), which is interpreted as the motion of a non-accelerating, free falling body. For the present work it is important to note that geodesics are locally uniquely determined by their starting position and initial velocity. This ensures the existence of the \emph{exponential map}: for $x\in M$ it is the smooth map
$\exp_x\colon \mathcal V_x\subseteq T_xM\to M$ defined on a suitable open neighbourhood $\mathcal V_x$ of $0$ by $\exp_x(v):=\gamma_v(1)$, where $\gamma_v$ is the unique geodesic with $\gamma_v(0)=x$ and $\dot\gamma_v(0)=v$. The region $\mathcal{V}_x$ can be shrunk to a convex open neighbourhood of $0$ such that $\exp_x$ becomes a diffeomorphism onto its image $\exp_x(\mathcal{U}_x)=U_x$, which is called a \emph{(convex) normal neighbourhood} of $x$. This diffeomorphism facilitates what is known as \emph{geodesic normal coordinates}. Under these coordinates, geodesics within the neighbourhood $U_x$ emanating from $x$ correspond precisely to straight line segments in $T_xM$ through the origin.

\begin{theorem}\label{thm:GNC}
    Every $x\in M$ admits an open convex normal neighbourhood $U_x\subseteq M$ together with a diffeomorphism~${\Phi:=\exp_x^{-1}\colon U_x\to \mathcal{U}_x}$ onto an open convex neighbourhood $\mathcal{U}_x$ of the origin in Minkowski space,
    such that $\Phi(x)=0$, affinely parametrised geodesics $\gamma$ in $U_x$ with $\gamma(0)=x$ are transformed into straight lines as~$\Phi\circ \gamma (t)= t\dot{\gamma}(0)$, and for every $y\in U_x$ we have:
    \begin{equation}
    \label{eq:local relations}\tag{$\star$}
        x \chron_{U_x} y \iff 0 \chron \Phi(y),
        \qquad
        x \caus_{U_x} y \iff 0 \caus \Phi(y),
        \qquad
        x \after_{U_x} y \iff 0 \after \Phi(y).
    \end{equation}
\end{theorem}
\begin{proof}
    See~\cite[Theorem~5.5]{landsman2021FoundationsGeneralRelativity} and \cite[Corollary~2.10]{minguzzi2019LorentzianCausalityTheory}.
\end{proof}

\begin{remark}
    \change{
    Of course, the dual of Equation~\eqref{eq:local relations} also holds, where $y$ appears in the past of $x$.}
    \change{
    Note however that $\Phi$ does not preserve the causal relations between arbitrary points in $U_x$: only to and from the basepoint~$x$.}
    Beware similarly that local causal relations in a spacetime, as described by the Minkowskian causal structure via $\Phi$, do not necessarily translate to global causal relations.
    For example, consider the cylinder and point shown in Figure~\ref{fig:cntvchron:chron}.
    Moving vertically up the cylinder appears locally to be strictly lightlike with respect to the starting point, but globally one can wind round the cylinder to reach the end point chronologically.
\end{remark}

\subsubsection{After Formula in Spacetime}
\label{sec:spacetime:afterformula}
As mentioned previously, it was shown in~\cite{Shapirovsky2005} that the after formula is satisfied in the after modality of any Minkowski space.

\begin{proposition}[\cite{Shapirovsky2005}]\label{thm:minkowskiafter}
    Consider $1+n$-dimensional Minkowski space $\mathbb{R}^{1+n}$ with its after relation~$\after$. Then
    \begin{equation*}
        \entail{\mframe{\mathbb{R}^{1+n}}{\after}}{}{a\alpha f}.
    \end{equation*}
\end{proposition}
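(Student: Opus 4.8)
The plan is to exploit the observation, already recorded in the excerpt, that $\afterformula$ is a Sahlqvist formula whose first-order correspondent is the displayed condition. By the Sahlqvist Theorem, validity of $\afterformula$ on $\mframe{\mathbb{R}^{1+n}}{\after}$ is equivalent to the frame satisfying that first-order condition, so I would reduce the entire problem to the following purely geometric statement: for all $x,y,y_1,y_2,z\in\mathbb{R}^{1+n}$, if $x\after y$, $y\after y_1$, $y\after y_2$, $x\after z$, and $y_1,y_2$ are distinct and $\after$-incomparable, then there exists $t$ with $x\after t\after z$ and ($t\after y_1$ or $t\after y_2$). (Equivalently, one can unfold the modal semantics directly; there the conjuncts $\necc\lnot p_2$ and $\necc\lnot p_1$ are precisely what force the two witness points to be incomparable, so the two routes coincide.)

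The easy half is the construction of $t$. I would take the straight segment from $x$ towards $z$, setting $t_\epsilon=x+\epsilon(z-x)$ for small $\epsilon\in(0,1)$. Using the characterisation~\eqref{eq:caus}, both $t_\epsilon-x=\epsilon(z-x)$ and $z-t_\epsilon=(1-\epsilon)(z-x)$ are future-directed causal since $z-x$ is (because $x\after z$), and $t_\epsilon$ differs from $x$ and from $z$; hence $x\after t_\epsilon\after z$ for every such $\epsilon$. It then remains only to arrange $t_\epsilon\after y_1$ or $t_\epsilon\after y_2$. If one of the two points, say $y_1$, satisfies $x\chron y_1$, this is automatic: by~\eqref{eq:chron} the set $I^{-}(y_1)$ is the preimage of $(0,\infty)$ under the continuous map $w\mapsto y_1^0-w^0-\norm{\vec{y_1}-\vec{w}}$, hence open; since $x\in I^{-}(y_1)$ and $t_\epsilon\to x$, for small enough $\epsilon$ we get $t_\epsilon\chron y_1$, and therefore $t_\epsilon\after y_1$ by Lemma~\ref{lemma:relations between causal orders}.

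The hard part will be showing that at least one of $x\chron y_1$, $x\chron y_2$ must in fact hold, and this is exactly where the intermediate point $y$ is indispensable (dropping $y$, both relations could be merely lightlike). I would argue by contradiction: if neither chronological relation holds, then since $x\after y_i$, Lemma~\ref{lemma:relations between causal orders} gives $x\horismos y_i$ for $i=1,2$. From $x\after y\after y_1$ with $x\horismos y_1$, the push-up rule excludes any timelike link (either $x\chron y$ or $y\chron y_1$ would yield $x\chron y_1$), forcing $x\horismos y$ and $y\horismos y_1$; reading off~\eqref{eq:horismos} and adding the two null relations yields the triangle equality $\norm{\vec{y_1}-\vec{x}}=\norm{\vec{y}-\vec{x}}+\norm{\vec{y_1}-\vec{y}}$, which places $x,y,y_1$ collinearly on a single null geodesic. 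The identical argument puts $x,y,y_2$ on a null geodesic through $x$ and $y$; as $x\neq y$ (Minkowski space is causal, so $\after$ is irreflexive), the two geodesics coincide, whence $y_1$ and $y_2$ lie on one future-directed null ray from $y$. Any two distinct points on such a ray are $\horismos$-comparable, contradicting the incomparability and distinctness of $y_1,y_2$. Hence one of the witnesses is chronologically reachable from $x$, and the construction of the previous paragraph supplies the required $t$, completing the verification of the first-order correspondent and thus of $\entail{\mframe{\mathbb{R}^{1+n}}{\after}}{}{a\alpha f}$.
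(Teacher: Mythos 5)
Your proof is correct and follows essentially the same route as the paper, which derives this proposition as the Minkowski case of its general Theorem~\ref{thm:spacetimeafter}: your contradiction argument (push-up forces $x \horismos y \horismos y_i$, and the triangle-equality computation then puts $y_1,y_2$ on a single null ray, contradicting incomparability) is precisely the paper's Lemma~\ref{lem:mink:horismos} and Corollary~\ref{corollary:unique lightline in Minkowski}, and the paper likewise finishes by producing $t$ near $x$ using openness of chronological pasts. Your only deviation is a Minkowski-specific streamlining: the straight causal segment $t_\epsilon = x + \epsilon(z-x)$ handles the cases $x \chron z$ and $x \horismos z$ uniformly, where the paper (working in a general spacetime without straight segments) splits into 2-density of $\chron$ and a point chosen on the lightcurve realising $x \horismos z$.
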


\begin{remark}
    Following on from Remark~\ref{rmk:diffafter}, $\afterformula$ does not hold in $\mathbb{R}^n$ with non-causal after ($\after'$) for $n \geq 3$.
    This is because one can pick two points on a face of a hypercube and another point on an orthogonal axis such that the least common point is the root point.
\end{remark}


Now we generalise this result, showing that $\afterformula$ holds in \emph{any} spacetime.
\change{The main technical step towards proving the after formula is satisfied in any spacetime is to prove the following property: if $x\after y \after y_1,y_2$ and $y_1\neq y_2$ are spacelike separated, then $x\chron y_i$ for some~$i$. First we give an elementary proof in Minkowski space, which is then lifted to arbitrary spacetimes using geodesic normal coordinates.}

\begin{lemmarep}
    In Minkowski space, let $x\horismos y \horismos y_1,y_2$ and $x\horismos y_1,y_2$. Then $y_1\horismos y_2$ or $y_2\horismos y_1$.
    \label{lem:mink:horismos}
\end{lemmarep}
\begin{proof}
    Take some arbitrary $z$ in the intersection of the lightcones of $x$ and $y$. Then we get from Equation~\eqref{eq:horismos} that
    $$
    \norm{\vec{z}-\vec{x}}= z^0- x^0 
    \qquad\text{and}\qquad 
    \norm{\vec{z}-\vec{y}}=z^0-y^0,
    $$
    and hence
    $$
    \norm{\vec{z}-\vec{x}}-\norm{\vec{z}-\vec{y}} = y^0 -x^0 = \norm{\vec{y}-\vec{x}}.
    $$
    This shows that equality is attained in the triangle inequality:
    $$
    \norm{\vec{z}-\vec{x}}
    \leq 
    \norm{\vec{z}-\vec{y}} + \norm{\vec{y}-\vec{x}},
    $$
    which occurs precisely when there exists a scalar $\lambda\geq 0$ with $\vec{z}-\vec{y}=\lambda(\vec{y}-\vec{x})$.
    In particular, for our situation, we get $\lambda_1,\lambda_2\geq 0$ with $\vec{y_i} = \vec{y}+ \lambda_i(\vec{y}-\vec{x})$. Note now by $y\horismos y_i$ that
    $$
    y_i^0 = y^0 + \norm{\vec{y_i}-\vec{y}}
    =
    y^0 + \lambda_i\norm{\vec{y}-\vec{x}} = y^0 + \lambda_i(y^0-x^0),
    $$
    so in particular $y_2^0 - y_1^0 = (\lambda_2-\lambda_1)(y^0-x^0)$. Similarly note $\vec{y_2}-\vec{y_1} = (\lambda_2-\lambda_1)(\vec{y}-\vec{x})$, and so we get
    $$
    \norm{\vec{y_2}-\vec{y_1}} = |\lambda_2-\lambda_1|\norm{\vec{y}-\vec{x}}
    =
    |\lambda_2-\lambda_1|(y^0-x^0)
    = 
    |y_2^0 - y_1^0|,
    $$
    which is precisely what it means for $y_1\horismos y_2$ or $y_2\horismos y_1$ to hold.
\end{proof}

\begin{corollary}\label{corollary:unique lightline in Minkowski}
    In Minkowski space, if $x\after y\after y_1,y_2$ and $y_1\neq y_2$ are spacelike separated, then~${x\chron y_i}$ for some $i$.
\end{corollary}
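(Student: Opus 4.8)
The plan is to argue by contradiction and reduce everything to the already-established Lemma~\ref{lem:mink:horismos}. Suppose instead that $x \nchron y_1$ and $x \nchron y_2$. Since Minkowski space is globally hyperbolic, it is causal, so $\after$ is irreflexive; combined with transitivity of $\after$ this forces $x, y, y_1, y_2$ to be pairwise distinct, and hence by Lemma~\ref{lemma:relations between causal orders} every instance of $\after$ in the chain splits as either $\chron$ or $\horismos$. The strategy is then to show that any genuinely timelike ($\chron$) link anywhere in the chain is immediately fatal via the push-up rule, which isolates the purely lightlike configuration that Lemma~\ref{lem:mink:horismos} is designed to handle.

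First I would rule out $x \chron y$: since $y \after y_i$ gives $y \caus y_i$, the push-up rule applied to $x \chron y$ and $y \caus y_i$ yields $x \chron y_i$, contradicting the assumption; hence $x \horismos y$. Next I would rule out $y \chron y_i$ for either $i$: since $x \horismos y$ gives $x \caus y$, the push-up rule applied to $x \caus y$ and $y \chron y_i$ again yields $x \chron y_i$, a contradiction; therefore $y \horismos y_1$ and $y \horismos y_2$. Finally, transitivity of $\caus$ gives $x \caus y_i$, and since $x \neq y_i$ and $x \nchron y_i$ we conclude $x \horismos y_i$ for both $i$.

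At this point the hypotheses of Lemma~\ref{lem:mink:horismos} are met verbatim: $x \horismos y \horismos y_1, y_2$ together with $x \horismos y_1, y_2$. The lemma then delivers $y_1 \horismos y_2$ or $y_2 \horismos y_1$, and since $\horismos$ refines $\caus$ this gives $y_1 \caus y_2$ or $y_2 \caus y_1$, directly contradicting that $y_1$ and $y_2$ are spacelike separated. This contradiction establishes that $x \chron y_i$ for some $i$.

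The main obstacle here is not any single computation—the geometric content is entirely carried by the previously proved lemma—but rather ensuring the case split is genuinely exhaustive and that each non-lightlike case is closed off by push-up \emph{before} invoking Lemma~\ref{lem:mink:horismos}. It is worth emphasising that the causal character of the $x$-to-$y$ segment is essential and is exactly what the push-up steps exploit: the conclusion would fail if $x$ merely sat at the common apex of two light rays through $y_1$ and $y_2$ without a causal link routed through an intermediate $y$. Once this bookkeeping is in place, the argument is a short deduction.
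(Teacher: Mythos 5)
Your proof is correct and takes essentially the same route as the paper's: decompose each $\after$ link into $\chron$ or $\horismos$ via Lemma~\ref{lemma:relations between causal orders}, use the push-up rule to eliminate every chronological link, and close the remaining all-$\horismos$ configuration with Lemma~\ref{lem:mink:horismos} against spacelike separation. Your contradiction framing and the distinctness bookkeeping via irreflexivity of $\after$ in Minkowski space are harmless elaborations of the same argument.
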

\begin{proof}
    Transitivity implies $x\after y_1,y_2$. So either $x\chron y_i$ for some $i$, in which case we are done, or $x\horismos y_1,y_2$. Moreover, if $x\chron y$ or $y\chron y_i$ for some $i$ we get the desired relation via the push-up rule. We are therefore left to consider the case that $x\horismos y\horismos y_1,y_2$. However, by Lemma~\ref{lem:mink:horismos}, this would imply $y_1\horismos y_2$ or $y_2\horismos y_1$, contradicting that $y_1$ and $y_2$ are spacelike separated.
\end{proof}

\change{The next few lemmas generalise this result to arbitrary spacetimes. We need some more definitions. A subset $A\subseteq M$ is called \emph{achronal} if $I^{+}(A)\cap A=\varnothing$, and a curve $\gamma$ is called \emph{achronal} if $\im(\gamma)$ is achronal. Explicitly, this means there are no times $s,t$ so that~$\gamma(s)\chron \gamma(t)$. A geodesic from~$p$ to~$q$ is called \emph{maximising} if it has greater or equal length than every other causal curve from~$p$ to~$q$. We then have the powerful result~\cite[Theorem~2.22]{minguzzi2019LorentzianCausalityTheory}, saying achronal lightlike geodesics are precisely the maximising lightlike ones.}

\change{
\begin{theoremrep}
\label{thm:2.22}
    Let $\gamma$ be a causal curve connecting $p$ to $q\neq p$. Either: there is a timelike curve~$\sigma$ from $p$ to $q$ whose length is strictly greater than that of $\gamma$; or $\gamma$ is a maximising geodesic (up to parametrisation).
    In particular, if there are no timelike curves connecting $p$ to $q$ then $\gamma$ is an achronal lightlike geodesic (up to parametrisation).    
\end{theoremrep}}

\change{
\begin{lemmarep}\label{lem:corner rounding}
Let $U\subseteq M$ be open, and take $p,q\in U$. If there exists a causal curve~$\gamma$ in $U$ from~$p$ to~$q\neq p$ that is not an achronal lightlike geodesic, then $p\chron_U q$.
\end{lemmarep}
\begin{proof}
Consider the spacetime $(U,g|_U)$. Applying Theorem~\ref{thm:2.22} to the given curve $\gamma$ in $U$ from~$p$ to~$q\neq p$, either we get a timelike curve $\sigma$ facilitating $p\chron_U q$, in which case we are done, or $\gamma$ is a maximising geodesic. If $\gamma$ is itself timelike we are also done, but if $\gamma$ is a maximising lightlike geodesic then it is achronal, contradicting the hypothesis.
\end{proof}
}

\change{
\begin{lemmarep}\label{lem:lift chronology}
Let $x\in M$, and take geodesic normal coordinates $\Phi\colon U_x\to \mathcal{U}_x$ from Theorem~\ref{thm:GNC}.
Suppose $p,q\in U_x$ satisfy~${p\after_{U_x} x \after_{U_x} q}$. If $\Phi(p)\chron \Phi(q)$ in Minkowski space, then $p\chron_{U_x} q$.
\end{lemmarep}

\begin{proof}
If $p\chron_{U_x} x$ or $x\chron_{U_x} q$ then the desired result follows from the push-up rule. Suppose therefore that $p\horismos_{U_x} x$ and $x\horismos_{U_x} q$. Let $\alpha$ and $\beta$ be the curves facilitating these relations, and let $\gamma$ be their concatenation. We claim that $\gamma$ is not a lightlike geodesic, so in particular not an achronal lightlike geodesic. From that, the desired~$p\chron_{U_x}q$ will follow by~Lemma~\ref{lem:corner rounding}.

Suppose for the sake of contradiction that $\gamma$ is a lightlike geodesic, affinely parametrised so that~$\gamma(0)=x$. Then the curve $\Phi\circ \gamma$ is the straight line passing through the origin with velocity~$\dot{\gamma}(0)$, which immediately gives~$\Phi(p)\horismos \Phi(q)$, a contradiction to~$\Phi(p)\chron \Phi(q)$.
\end{proof}
}

\begin{lemmarep}\label{lemma:unique lightlines in any spacetime}
    In any spacetime, if $x\after y \after y_1,y_2$ and $y_1\neq y_2$ are spacelike separated, then $x\chron y_i$ for some $i$.
\end{lemmarep}
\begin{proof}
\change{
    Let $\gamma,\delta_1,\delta_2$ denote the causal curves corresponding to $x\after y$ and $y\after y_1,y_2$, respectively. Without loss of generality, suppose that $\delta_i$ are parametrised on the unit interval $[0,1]\to M$.
    Consider the set $S= \delta_1^{-1}(\im(\delta_2))$, which is nonempty since $\delta_1(0)=\delta_2(0)=y$ and a strict subset of $[0,1]$ because~$y_1\neq y_2$ are spacelike separated.
    Since~$[0,1]$ is compact and $\delta_2$ is continuous,~$\im(\delta_2)$ is compact in~$M$. Since~$M$ is Hausdorff,~$\im(\delta_2)$ is thus closed, and by continuity of $\delta_1$ the preimage~$S$ is closed. Hence $S$ has a maximum element $t_0$, the latest time at which~$\delta_1$ overlaps~$\delta_2$. We can further reparametrise $\delta_2$ without loss of generality such that $\delta_1(t_0)=\delta_2(t_0)$, and hence $\delta_1$ and $\delta_2$ never intersect after~$t_0$. Let~$y':= \delta_1(t_0)=\delta_2(t_0)$ be the last intersection point; see the left diagram in Figure~\ref{fig:uniquelightlines}.
    Note if~$t_0=0$ then~$y'= y$.
    
    Using Theorem~\ref{thm:GNC}, choose a convex normal neighbourhood $U_{y'}\subseteq M$ and geodesic normal coordinates
    $\Phi\colon U_{y'}\to \mathcal{U}_{y'}$.
    Since $U_{y'}$ is open and $\delta_i$ are continuous with $\delta_i(t_0)=y'$, we can find time intervals $\epsilon_i>0$ such that $t_0+\epsilon_i\leq 1$ and on $[t_0,t_0+\epsilon_i]$ the curves $\delta_i$ stay in $U_{y'}$.
    For any~${t\in (t_0,t_0+\epsilon_i]}$ we then get that $y'\after_{U_{y'}} \delta_i(t)$, and by~\eqref{eq:local relations} we have $0\after \Phi\circ \delta_i(t)$ in $\mathcal{U}_{y'}$.
    From this it follows that~$\delta_i(t) \neq y'$ for every~$t\in (t_0,t_0+\epsilon_i]$, otherwise $y'\after_{U_{y'}} y'$ and hence $0\after 0$, which is impossible in Minkowski space.
    Using this, we see from Equation~\eqref{eq:caus} that the time coordinate~${(\Phi\circ \delta_i(t))^0}$ must be strictly greater than zero for all~$t\in (t_0,t_0+\epsilon_i]$.
    In particular there exists some~$\epsilon >0$ that is below the endpoints:~${\epsilon < \min_i (\Phi\circ \delta_i(t_0+\epsilon_i))^0}$, meaning that eventually the curves $\Phi\circ \delta_i$ are above the hyperplane~${\{ x\in \mathbb{R}^{1+n}: x^0= \epsilon\}}$.
    By the intermediate value theorem there exists~${t_i\in (t_0,t_0+\epsilon_i]}$ so that the curves are precisely on the hyperplane: $(\Phi\circ\delta_i(t_i))^0 = \epsilon$.
    Using this, define the distinct points~$p_i:=\delta_i(t_i)$, from which we get the distinct points~$z_i:=\Phi(p_i)$ on the hypersurface, meaning that $z_1\neq z_2$ are spacelike separated.
    
    We similarly pick a point $p$ in the past of $y'$. If $t_0>0$, find some~$\epsilon_0>0$ so that $\delta_1$ remains within~$U_{y'}$ on the interval $[t_0-\epsilon_0,t_0]$, and define $p:= \delta_1(t_0-\epsilon_0)$ and~$z:=\Phi(p)$; see the right diagram in Figure~\ref{fig:uniquelightlines}. In the case that $t_0=0$, so $y'=y$, pick $p$ analogously but on the curve $\gamma$ instead.

    In total, we have the situation in the normal neighbourhood $U_{y'}$ that $p\after_{U_{y'}} y' \after_{U_{y'}} p_1,p_2$. Using~\eqref{eq:local relations} and its dual, we get the situation in the region $\mathcal{U}_{y'}$ of Minkowski space that~$z\after 0 \after z_1,z_2$ where~$z_1\neq z_2$ are spacelike separated, so by~Corollary~\ref{corollary:unique lightline in Minkowski} we get~$z\chron z_k$ for some~$k$.
    Applying Lemma~\ref{lem:lift chronology} we hence get $p\chron_{U_{y'}} p_k$.

    Finally, by construction we have $x\after p$ and $p_i\after y_i$, so globally $x\after p \chron p_k \after y_k$, and the desired result follows by the push-up rule:~$x\chron y_k$.
}
\end{proof}

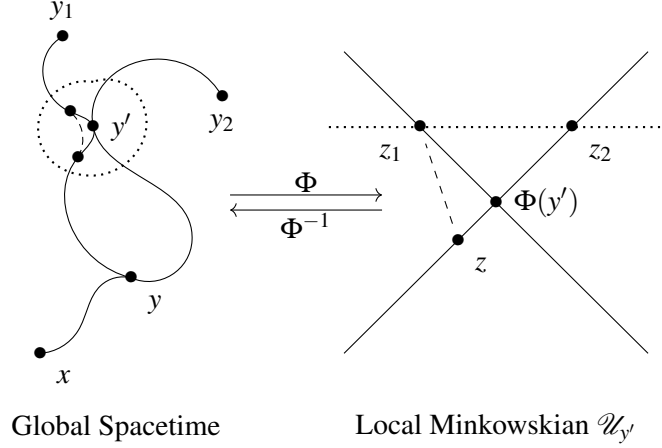
\begin{figure}[t]
    \centering
    \begin{tikzpicture}
    \node[] (gen) at (1,-1) {Global Spacetime};
    \node[label=below right:{$x$}] (x) {$\bullet$};
    \node[label=below right:{$y$}] (y) at (1.2,1) {$\bullet$};
    \draw [] (x.center) to [ curve through ={(.5,.3) . . (.8,.9)  }] (y.center);
    
    \node[label=right:{$y'$}] (yp) at (.7,3) {$\bullet$};
    \node[label=above:{$y_1$}] (y1) at (.3,4.2) {$\bullet$};
    \node[label=below:{$y_2$}] (y2) at (2.4,3.4) {$\bullet$};
    \node[] (diz) at (.5,2.6) {$\bullet$}; 
    \node[] (diz1) at (.4,3.2) {$\bullet$}; 
    \node[] (diz2) at (.7, 3.3) {$\bullet$}; 
    \draw [] (y.center) to [ curve through ={(1,1.5) . . (.4,1.7)  . . (diz) . . (yp) . . (diz1) . . (.1,4)  }] (y1.center);
    \draw [] (y.center) to [ curve through ={(2,1.5)  . . (yp) . .  (diz2) . . (1.6,3.9) }] (y2.center);
    \draw [densely dashed] (diz.center) to [ curve through ={(.55,2.95)}] (diz1.center);
    \node[] (U) at (.7,3.6) {};
    \draw [dotted,thick] (U.center) to [ curve through ={(0,2.8)  . . (.4,2.4) . . (1.3,3.4)  }] (U.center);

    \node[label=above:{$\Phi$},label=below:{$\Phi^{-1}$}] at (3.5,2) {};
    \draw[->] (2.5,2.1) -- (4.5,2.1);
    \draw[->] (4.5,1.9) -- (2.5,1.9);

    \node[] (mink) at (6, -1) {Local Minkowskian $\mathcal{U}_{y'}$};
    \node[label=right:{$\Phi(y')$}] (dyp) at (6, 2) {$\bullet$};
    \draw[] (4,0) -- (dyp.center) -- (4, 4);
    \draw[] (8,0) -- (dyp.center) -- (8, 4);
    \node[label=below right:{$z$}] (z) at (5.5,1.5) {$\bullet$};
    \node[label=below left:{$z_1$}] (z1) at (5,3) {$\bullet$};
    \node[label=below right:{$z_2$}] (z2) at (7,3) {$\bullet$};
    \draw[dashed] (z) -- (z1);
    \draw[thick, dotted] (3.8,3) -- (8.2,3);
\end{tikzpicture}
    \caption{Visualisation of the proof for Lemma~\ref{lemma:unique lightlines in any spacetime} using two-dimensional Minkowski spacetime.
    Dashed (solid) lines represent chronological~(causal)~curves. The dotted region on the left denotes~$U_{y'}$, and the dotted line on the right depicts the spacelike hypersurface defined by $x^0=\epsilon$.}
    \label{fig:uniquelightlines}
\end{figure}

\change{We are now ready to prove the main theorem.}

\begin{theoremrep}\label{thm:spacetimeafter}
    Let $\genericworld$ be a spacetime and $\after$ its after relation. Then
    \begin{equation*}
        \entail{\mframe{\genericworld}{\after}}{}{a\alpha f}.
    \end{equation*}
\end{theoremrep}
\begin{proof}
    Take $x, y, y_1, y_2, z \in M$ such that $x \after y, y_1, y_2, z$ and $y \after y_1, y_2$, where $y_1 \neq y_2$ are unrelated by~$\after$. This implies $y_1$ and $y_2$ are spacelike separated. We need to find $t\in\genericworld$ such that $x\after t \after y_i, z$, for some~$i$. First, by Lemma~\ref{lemma:unique lightlines in any spacetime} we can assume without loss of generality that $x\chron y_1$. In the case~$x=z$ the result is trivial~(pick $t=x$), so assume~$x\neq z$. Thus we have a case distinction of whether $x\chron z$ or $x\horismos z$ (Lemma~\ref{lemma:relations between causal orders}). In the first case, the existence of $t\in \genericworld \setminus \{x\}$ with $x\chron t\chron y_1,z$ follows immediately from 2-density of $\chron$.
    
    Instead, suppose that $x \horismos z$. Let~$\gamma$ be the timelike curve connecting $x\chron y_1$ and let~$\delta$ be the lightcurve connecting $x\horismos z$. Take a point $p\in \im(\gamma)\setminus\{x\}$ so that $x\chron p\chron y_1$.
    Then $x\in I^{-}(p)$, and since chronological cones are open there must exist a point $t\in I^{-}(p)\cap \im(\delta)\setminus\{x\}$, which hence satisfies $x\after t \after y_1$ and $x \after t \after z$, as desired.
\end{proof}

Finally, we build on Corollary~\ref{cor:spacetimeclass} using Theorem~\ref{thm:spacetimeafter} and Lemma~\ref{lem:afterformula}.
\begin{corollary}
    For any spacetime $\genericworld$ we have that $\afterlogic \subseteq \modal{\mframe{\genericworld}{\after}}$.
\end{corollary}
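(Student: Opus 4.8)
The plan is to exploit the fact that $\afterlogic$ is by definition the smallest normal modal logic containing the axioms $a4$, $aD$, $ad$, and $\afterformula$, together with the standard meta-fact that for any frame $F$ the collection $\modal{F}$ of formulas valid under all evaluations of $F$ is itself a normal modal logic: it contains $K$ and every propositional tautology, and it is closed under modus ponens, uniform substitution, and necessitation. Consequently, to establish the inclusion $\afterlogic \subseteq \modal{\mframe{\genericworld}{\after}}$ it suffices to check that each of the finitely many generating axioms of $\afterlogic$ is valid on the frame $\mframe{\genericworld}{\after}$; closure under the normal-modal-logic rules then forces every theorem of $\afterlogic$ into $\modal{\mframe{\genericworld}{\after}}$.

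First I would dispatch the $\mathbf{D4} + ad$ part wholesale by invoking Corollary~\ref{cor:spacetimeclass}(iii), which already records $\mathbf{D4} + ad \subseteq \modal{\mframe{\genericworld}{\after}}$; at the level of the underlying frame this is simply the statement that $\after$ is transitive and serial (Lemma~\ref{lem:relcommonproperties}) and dense. The only remaining generator is the after formula $\afterformula$, and its validity on $\mframe{\genericworld}{\after}$ for an arbitrary spacetime $\genericworld$ is precisely the content of Theorem~\ref{thm:spacetimeafter}. With all generators verified, the inclusion follows immediately from minimality of $\afterlogic$ among normal modal logics containing them.

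The point to emphasise is that essentially all of the difficulty has been front-loaded into Theorem~\ref{thm:spacetimeafter}; the corollary itself is a bookkeeping step. The only mild subtlety worth spelling out is the meta-level closure argument, namely that $\modal{\mframe{\genericworld}{\after}}$ genuinely is a normal modal logic, so that validity of the generators propagates to validity of all derived theorems. This is routine (frame-validity is preserved by necessitation, substitution, and modus ponens), but it is the hinge on which the ``axioms-valid implies logic-contained'' reduction rests, so I would state it explicitly rather than leave it implicit.
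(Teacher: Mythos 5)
Your proposal is correct and matches the paper's own (unstated, implicit) argument: the paper derives this corollary exactly by combining Corollary~\ref{cor:spacetimeclass}(iii) for the $\mathbf{D4}+ad$ part with Theorem~\ref{thm:spacetimeafter} for $\afterformula$, relying on the same standard fact that $\modal{\mframe{\genericworld}{\after}}$ is a normal modal logic closed under the derivation rules. Your explicit mention of the meta-level closure step is a reasonable elaboration of what the paper leaves implicit, not a different route.
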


What this section shows is that there is a connection between the push-up rule, which naturally occurs in spacetimes, and $\afterformula$.
Many of the proofs that we have used for the $\afterformula$ are reliant on the push-up rule.
An exploration of the push-up rule and its link to Kripke frames (possibly with multiple relations) is warranted, but left to future work.

\subsection{The After Formula in Finite Frames}
\label{sec:moreafter}
To begin with, we introduce some concepts for modal logics.
A cluster, $C_x$, of a point, $x$, is a set containing $x$ and the set of points that can see and be seen by $x$, \ie{} $C_x = \{ x \} \cup \{y \in \genericworld : y \genericrelation x, x \genericrelation y \}$.
A degenerate cluster is a single irreflexive point.
A successor cluster of a point, $x \in \genericworld$, is a cluster,~$C$, where $x \genericrelation y$ for $y \in C$ and if $x \genericrelation y' \genericrelation y$, then $y' \in C$ or $y' \in C_x$ (\ie{} there is no other cluster between~$x$ and~$C$).\footnote{Note that in transitive frames and when $x$ is irreflexive, then $C_x = \{ x \}$ and we must have that $y' \in C$.}
For $x \in \genericworld$, denote $S_x = \{C : C~\text{is a successor cluster of}~x \}$.

We analyse the effect $\afterformula$ has on irreflexive points in a finite frame.
For simplicity sake, we will assume that the frame is transitive and dense (as are all the causal relations).
Reflexive points are not interesting since $\afterformula$ will always hold in those points.
\begin{propositionrep}
Let $F = \mframe{\genericworld}{\genericrelation}$ be a finite frame that is transitive and dense.
Let $x \in \genericworld$ be irreflexive and $S_x$ the set of (non-degenerate) successor clusters of $x$.
Then $\mentail{F}{}{\afterformula}$
iff
whenever $S \in S_x$ and $y_1, y_2 \in \genericrelation(S)$ with $y_1 \neq y_2$, $y_1 \ngenericrelation y_2$, $y_2 \ngenericrelation y_1$, then $\bigcup_{S \in S_x} S \subseteq \genericrelation^{-1}(y_1) \cup \genericrelation^{-1}(y_2)$.
\label{prop:successorcluster}
\end{propositionrep}
\begin{proof}
    $\Leftarrow$:
    Let $V$ be an evaluation on $F$ and consider the model $\genericmodel$.
    Firstly note that for any $x \in \genericworld$ that is reflexive, we have that $\mentail{\genericmodel}{x}{\afterformula}$.
    
    Now suppose $x \in \genericworld$ is an irreflexive point and that 
    \begin{equation*}
    \mentail{\genericmodel}{x}{\poss ( \poss (p_1 \land \lnot p_2 \land \necc \lnot p_2) \land \poss (p_2 \land \lnot p_1 \land \necc \lnot p_1)) \land \poss q}.    
    \end{equation*}
    There exists a point $y \in \genericrelation(x)$ such that $\mentail{\genericmodel}{y}{\poss (p_1 \land \lnot p_2 \land \necc \lnot p_2) \land \poss (p_2 \land \lnot p_1 \land \necc \lnot p_1)}$.
    We note that for some cluster $S \in S_x$, $y \in S$ or $y \in \genericrelation(S)$.
    This is because if $y$ were not in a successor cluster after $x$ and not in the future of a successor cluster, then its own cluster would be a successor cluster, which is a contradiction.
    
    Additionally, we have $y_1, y_2 \in \genericrelation(y)~(\subseteq \genericrelation(S))$ such that $\mentail{\genericmodel}{y_1}{p_1 \land \lnot p_2 \land \necc \lnot p_2}$ and $\mentail{\genericmodel}{y_2}{p_2 \land \lnot p_1 \land \necc \lnot p_1}$.
    It can be seen that $y_1 \neq y_2$, as $p_1$ holds in $y_1$ but not in $y_2$; and that $y_1 \ngenericrelation y_2$, as $\necc \lnot p_2$ holds in $y_1$ but $p_2$ holds in $y_2$.
    Similarly, $y_2 \ngenericrelation y_1$.
    Thus, we have that $\bigcup_{S \in S_x} S \subseteq \genericrelation^{-1}(y_1) \cup \genericrelation^{-1}(y_2)$.

    Since $\mentail{\genericmodel}{x}{\poss q}$, there exists $z \in \genericrelation(x)$ such that $\mentail{\genericmodel}{z}{q}$.
    Again, there exists some $S' \in S_x$ such that either $z \in S'$ or $z \in \genericrelation(S')$.

    Let $t \in S'$.
    We have that $t \in \genericrelation^{-1}(y_1) \cup \genericrelation^{-1}(y_2)$ and therefore $y_1 \in \genericrelation(t)$ or $y_2 \in \genericrelation(t)$.
    Thus, we must have that $\mentail{\genericmodel}{t}{\poss p_1 \lor \poss p_2}$.
    As $z \in S'$ or $z \in \genericrelation(S')$, then $z \in \genericrelation(t)$ and $\mentail{\genericmodel}{t}{\poss q}$, which means that $\mentail{\genericmodel}{t}{(\poss p_1 \land \poss q) \lor (\poss p_2 \land \poss q)}$.
    Finally, since $t \in \genericrelation(x)$, we have
    \begin{equation*}
        \mentail{\genericmodel}{x}{\poss(\poss p_1 \land \poss q) \lor \poss (\poss p_2 \land \poss q))}.
    \end{equation*}
    Therefore, $\mentail{\genericmodel}{x}{\afterformula}$.
    As $\afterformula$ holds for any $x \in \genericworld$ and $V$ is generic, then $\mentail{F}{}{\afterformula}$.
    
{
    \newcommand{\genericmodelt}{\mmodel{\genericworld}{\genericrelation}{\genericeval_T}}
    $\Rightarrow$:
    Suppose now that $\mentail{F}{}{\afterformula}$.
    Consider $y_{1}, y_{2} \in \genericrelation(S)$ with $y_{1} \neq y_{2}$, $y_{1} \ngenericrelation y_{2}$, $y_{2} \ngenericrelation y_{1}$.

    We will be creating a generic model of $F$.
    Let $\genericeval$ be an evaluation such that $\genericeval(p_1) = \{y_{1}\}$, $\genericeval(p_2) = \{y_{2}\}$.
    For $T \in S_x$, let $\genericeval_T$ be an evaluation where $\genericeval_T(q) = T$ and $\genericeval_T(p_i) = \genericeval(p_i)$.
    It should be clear that
    \begin{equation*}
        \mentail{\genericmodelt}{x}{\poss ( \poss (p_1 \land \lnot p_2 \land \necc \lnot p_2) \land \poss (p_2 \land \lnot p_1 \land \necc \lnot p_1)) \land \poss q},
    \end{equation*}
    and since $\mentail{\genericmodelt}{x}{\afterformula}$, then $\mentail{\genericmodelt}{x}{\poss(\poss p_1 \land \poss q) \lor \poss(\poss p_2 \land \poss q)}$.

    Since $T$ is a successor cluster of $x$, then $\poss q$ holds in $T$ or $x$ (and no other points in $\genericrelation(x)$).
    As $x$ is irreflexive, $\exists t \in T$ such that $\mentail{\genericmodelt}{t}{\poss p_1 \land \poss q}$ and, as $T$ is a non-degenerate cluster, for any $t' \in T$, $\mentail{\genericmodelt}{t'}{\poss p_1}$ by transitivity.
    The same reasoning holds for $p_2$, so we must have that $t' \genericrelation y_{1}$ or $t' \genericrelation y_{2}$.
    We have $T \subseteq \genericrelation^{-1}(y_{1}) \cup \genericrelation^{-1}(y_{2})$ and since $T$ was generically chosen, $\bigcup_{S \in S_x} S \subseteq \genericrelation^{-1}(y_{1}) \cup \genericrelation^{-1}(y_{2})$.
    }
\end{proof}

In the class of transitive, dense, and $\afterformula$ frames, any two points after a successor cluster of an irreflexive point must be jointly connected between all other successor clusters.
We can push this concept further to find an interesting sub-class.
We begin with a definition.
\begin{definition}
    A chain of clusters in $\genericframe$ is a tuple of clusters, $(C_1, C_2, \dots, C_k)$, such that for any $c \in C_l$, $\genericrelation^{-1}(c) = \bigcup_{i=1}^{l}(C_i)$ if $C_l$ is non-degenerate and $\genericrelation^{-1}(c) = \bigcup_{i=1}^{l-1}(C_i)$ otherwise (\emph{i.e.,} within $\genericframe$ a point in a cluster can only have seen the previous clusters).
\end{definition}

\begin{propositionrep}
    Let $F = \mframe{\genericworld}{\genericrelation}$ be a finite frame that is transitive, dense, and has the property of $\afterformula$.
    Let $x \in \genericworld$ an irreflexive point with $R(x) \neq \varnothing$ and $|S_x| \geq 2$.
    Let $F_x = (\genericworld_x, \genericrelation_{x})$ be the generated subframe of $x$, where $\genericworld_x = \{x\} \cup \genericrelation(x)$ and $\genericrelation_{x} = \genericrelation \cap (\genericworld_x \times \genericworld_x)$.
    For any (non-degenerate) successor cluster of $x$, $S \in S_x$, either it has no chain of clusters in $\mframe{\genericworld_x}{\genericrelation_{x}}$ or it has a unique chain of clusters in $\mframe{\genericworld_x}{\genericrelation_x}$, $(\{x\}, S, S_1, \dots S_k)$ where $S_i \subseteq \genericworld_x$, such that if $y \in \genericrelation_{x}(S)$ and $y \notin S \cup S_1 \cup \dots \cup S_k$, then $\bigcup_{S' \in S_x} S' \subseteq \genericrelation^{-1}(y)$.
    \label{cor:uniquechain}
\end{propositionrep}
\begin{proof}
    Suppose that $S \in S_x$ has more than one chain of clusters associated, denoted $(\{x\},S, S_1, \dots, S_k)$ and $(\{x\},S, S'_1, \dots, S'_k)$.
    Let $c \in S_{i}$ and $c' \in S'_{j}$.
    We have that $c \neq c'$, $c \ngenericrelation c'$, $c' \ngenericrelation c$, and $c, c' \in \genericrelation(S)$.
    By Proposition~\ref{prop:successorcluster}, we have that $\bigcup_{S' \in S_x} S' \subseteq \genericrelation(c) \cup \genericrelation(c')$.

    Select $T \in S_x$ such that $T \neq S$.
    It must be the case that $T \in \genericrelation^{-1}(c)$ or $T \in \genericrelation^{-1}(c')$, and therefore $T \in \genericrelation_x^{-1}(c)$ or $\genericrelation_{x}^{-1}(c')$.
    This is a contradiction as $c$ and $c'$ are in a chain of clusters, and so $T$ cannot be within $\genericrelation_{x}^{-1}(c)$ or $\genericrelation_{x}^{-1}(c)$.
    Therefore, $S$ must only be in one or no chain of clusters.

    Now suppose that $S$ is associated with a chain of clusters and let $y \in \genericrelation_{x}(S)$ with $y \notin S \cup S_1 \cup \dots \cup S_k$.
    We know that for any $c \in S \cup \bigcup_i S_i$ we must have that $\bigcup_{S' \in S_x} S' \subseteq \genericrelation^{-1}(c) \cup \genericrelation^{-1}(y)$.
    However, $c$ can only see points in $S$ since it is in a chain of clusters and cannot see any points in any other successor cluster.
    Therefore, it must be the case that $y$ must see all points in all the other clusters.
    As we have $S \subseteq \genericrelation^{-1}(y)$, then $\bigcup_{S' \in S_x} S' \subseteq \genericrelation^{-1}(y)$
\end{proof}

Note that Proposition~\ref{cor:uniquechain} only guarantees a chain of clusters within the generated subframe.
The past of a point within the chain of clusters can be larger within the full frame and may branch outside of the subframe.

These chains of clusters are similar to the light lines in a spacetime.
One can remain only on a single chain of clusters (light line) and once you leave the chain you are in the ``interior''.
This interior can be reached by any of the light lines and acts like the chronological relation.
Additionally, once you enter the interior, you can never return to the outer chains.
This is demonstrated in a frame given in Figure~\ref{fig:exafterformulaframe}.

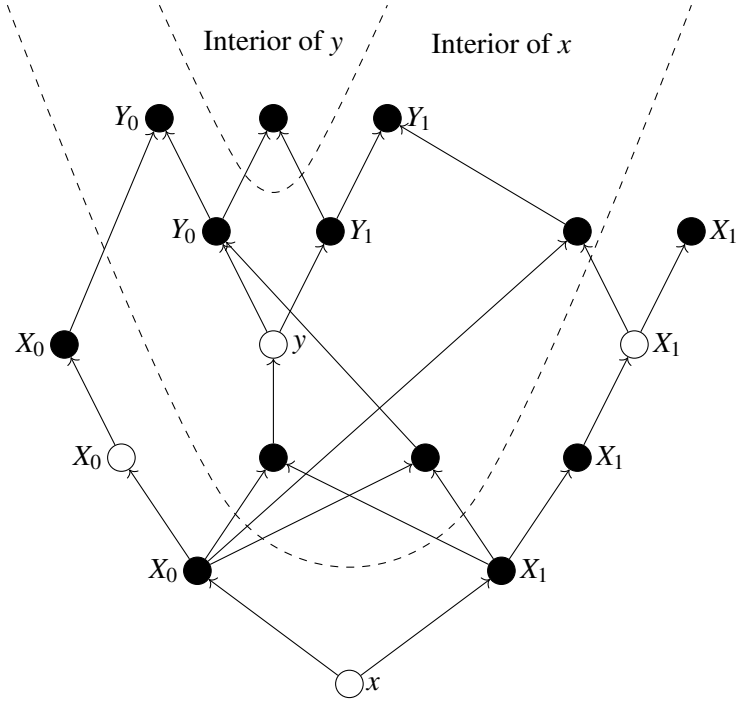
\begin{figure}[t!]
    \centering
    \begin{tikzpicture}[
        nodes={draw, circle}, ->,
        level 1/.style={sibling distance=4cm},
        level 2/.style={sibling distance=2cm},
        level 3/.style={sibling distance=1.5cm},
        level 4/.style={sibling distance=1.5cm},
    ]
    \node[label=right:{$x$}] (s) {} [grow'=up]
        child {node[refl,label=left:{$X_0$}] (s0) {}
            child {node[label=left:{$X_0$}] (s00) {}
                child {node[refl,label=left:{$X_0$}] (s000) {}}
                child {edge from parent[draw=none]}
            }
            child {node[refl] (i0) {}
                child {node[] (i1) [label=right:{$y$}] {}
                    child {node[refl,label=left:{$Y_0$}] (i1s0) {}
                        child {node[refl,label=left:{$Y_0$}] (i1s00) {}}
                        child {node[refl] (i1i0) {}
                            }
                    }
                    child {node[refl,label=right:{$Y_1$}] (i1s1) {}
                        child {edge from parent[draw=none]}
                        child {node[refl, label=right:{$Y_1$}] (i1s10) {}}
                        }
                }
            }
        }
        child {node[refl,label=right:{$X_1$}] (s1) {}
            child {node[refl] (i2) {}}
            child {node[refl,label=right:{$X_1$}] (s10) {}
                child {edge from parent[draw=none]}
                child {node[label=right:{$X_1$}] (s100) {}
                    child {node[refl] (i3) {}
                    }
                    child {node[refl,label=right:{$X_1$}] (s1000) {}} 
                }
            }
        };
        \node[empty] () at (2,8.5) {Interior of $x$};
        \node[empty] () at (-1,8.5) {Interior of $y$};
        
        \draw (s1) -- (i0);
        \draw (s0) -- (i2);
        \draw (i2) -- (i1s0);
        \draw (s0) -- (i3);
        \draw (i3) -- (i1s10);
        \draw (s000) -- (i1s00);
        \node[empty] (m0) at ($(s0)!0.5!(i0)$) {};
        \node[empty] (m1) at ($(s1)!0.5!(i2)$) {};

        \draw (i1s1) -- (i1i0);
        \draw[dashed,-] plot [smooth] coordinates {(-4.5,9) (m0.center) (m1.center) (4.5,9)};
        \draw[dashed,-] plot [smooth] coordinates {(-2.5,9) ($(i1s0)!0.5!(i1i0)$) ($(i1s1)!0.5!(i1i0)$) (0.5,9)};
    \end{tikzpicture}
    \caption{A transitive, dense frame with $\afterformula$.
    The points $x$ and $y$ each have two chains of clusters within their own generated subframe (labelled $X_0, X_1, Y_0, Y_1$).
    Note how the points on the chains of clusters starting from $y$ can have points see them as long as $y$ does not see those points.}
    \label{fig:exafterformulaframe}
\end{figure}

\shelf{Finite model property/Decidability of $\afterlogic$?}


\section{Two-dimensional Spacetimes}
\label{sec:separable}
It remains an open problem as to what the modal logic of Minkowski spacetime with $\after$ is, but it is known that the logic of two-dimensional Minkowski spacetime is unique to any $1+n$-dimensional Minkowski spacetime ($n > 2$)~\cite{Shapirovsky2005}.

\subsection{After Formula for Two Dimensions}
\label{sec:separable:after2}
Consider the formula,
\begin{equation*}
    \aftertwoformula := \poss (p_1 \land \lnot p_2 \land \necc \lnot p_2)
                \land \poss (p_2 \land \lnot p_1 \land \necc \lnot p_1)
                \land \poss q
                \Rightarrow
                \poss (\poss p_1 \land \poss q) \lor \poss (\poss p_2 \land \poss q),
\end{equation*}
which is $\afterformula$ without the first diamond.\footnote{Read as the after-2 formula/axiom.}
It corresponds to the following first order property:
\begin{equation*}
\begin{aligned}
    \forall x, y_1, y_2, z & \Big[ [x \genericrelation y_1 \land x \genericrelation y_2 \land x \genericrelation z 
    \land y_1 \neq y_2 \land \lnot(y_1  \genericrelation  y_2) \land \lnot(y_2  \genericrelation  y_1) ]
    \\ &  \quad \to
    \exists t (x \genericrelation t \land t \genericrelation z \land (t \genericrelation y_1 \lor t \genericrelation y_2 )) \Big].
\end{aligned}
\end{equation*}
This formula is a Sahlqvist formula and, therefore, any canonical normal modal logic that adds $\aftertwoformula$ is canonical.
Even though $\aftertwoformula$ is a simple modification to $\afterformula$, we can show that it is strictly more expressive within the context of spacetimes.

\begin{theoremrep}
We have that
    \begin{enumerate}[label=(\roman*)]
        \item $\mathbf{K4} + \aftertwoformula \vdash \afterformula$; \label{thm:aftertwoexpress:expressive}
        \item $\afterlogic \nvdash \aftertwoformula$. \label{thm:aftertwoexpress:inexpressive}
    \end{enumerate}
    \label{thm:aftertwoexpress}
\end{theoremrep}
\begin{proof}
    \ref{thm:aftertwoexpress:expressive}:
    Let $\mathcal{M} = \mmodel{\genericworld}{\genericrelation}{\genericeval}$ be a $\mathbf{K4} + \aftertwoformula$ model and $x \in \genericworld$.
    Assume that $\mentail{\mathcal{M}}{x}{\poss(\poss (p_1 \land \lnot p_2 \land \lnot \necc p_2) \land \poss(p_2 \land \lnot p_1 \land \lnot \necc p_1)) \land \poss q}$.
    Let $x \genericrelation y$ such that $\mentail{\mathcal{M}}{y}{\poss (p_1 \land \lnot p_2 \land \lnot \necc p_2) \land \poss(p_2 \land \lnot p_1 \land \lnot \necc p_1)}$, then $\mentail{\mathcal{M}}{x}{\poss \poss (p_1 \land \lnot p_2 \land \lnot \necc p_2) \land \poss\poss(p_2 \land \lnot p_1 \land \lnot \necc p_1)}$.
    
    Since $\mathcal{M}$ is transitive, then we have
    $\mentail{\mathcal{M}}{x}{\poss (p_1 \land \lnot p_2 \land \lnot \necc p_2) \land \poss(p_2 \land \lnot p_1 \land \lnot \necc p_1) \land \poss q}$.
    As $\mathcal{M}$ is a $\aftertwoformula$ model, we have $\mentail{\mathcal{M}}{x}{\poss (\poss p_1 \land \poss q) \lor \poss (\poss p_2 \land \poss q)}$ and therefore $\mentail{\mathcal{M}}{x}{\afterformula}$.
    Because $x$ and $\mathcal{M}$ are general, then $\mathbf{K4} + \aftertwoformula \vdash \afterformula$.

    \begin{figure}[t]
        \centering
        \begin{tikzpicture}[nodes={draw, circle}, ->,]
        \node (s) {} [grow'=up]
        child {node[refl] (0) [label=left:{$p_1$}] {}}
        child {node[refl] (1) [label=left:{$p_2$}] {} }
        child {node[refl] (2) [label=left:{$q$}] {} };
        \end{tikzpicture}
        \caption{A finite $\afterlogic$ model that does not have $\aftertwoformula$ on the root.}
        \label{fig:aftertwoexpress}
    \end{figure}
    \ref{thm:aftertwoexpress:inexpressive}: A counterexample is provided by the finite model given in Figure~\ref{fig:aftertwoexpress}.
    The frame has all the properties of $\afterlogic$, but cannot express $\aftertwoformula$.
    One of the points $p_1$ or $p_2$ must share a common point with $q$ after the root but they do not.
\end{proof}

\begin{remark}
For finite (spacetime) frames, $\aftertwoformula$ says that if there are successor clusters from an irreflexive point, there are at most two of them each possibly with their own unique chains of clusters (light lines).
This can be shown in a similar way as in Section~\ref{sec:moreafter}.
\end{remark}

\subsection{Separability of Two-Dimensional Spacetimes}
\label{sec:separable:separability}
With the after relation, we can see that $\aftertwoformula$ holds in two-dimensional Minkowski spacetime but does not hold in any other $1+n$-dimensional Minkowski spacetime.

\begin{lemma}
    \label{lemma:aftertwoinminkowski}
    Let $M = \mathbb{R}^{1+n}$ and $\after$ be the after relation on Minkowski spacetime.
    We have that
    \begin{enumerate}[label=(\roman*)]
        \item $\mentail{\mframe{\mathbb{R}^2}{\after}}{}{\aftertwoformula}$
        \item $\nmentail{\mframe{\mathbb{R}^{1+n}}{\after}}{}{\aftertwoformula}$ for $n \geq 2$
    \end{enumerate}
\end{lemma}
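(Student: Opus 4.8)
The plan is to derive both parts from the first-order correspondent of $\aftertwoformula$ displayed above, instantiating the generic relation $\genericrelation$ as $\after$. This is legitimate because $\aftertwoformula$ is Sahlqvist, so frame validity is equivalent to the frame satisfying that property. Since Minkowski space contains no causal loops, recall that $x \after y$ reduces to $x \caus y$ with $x \neq y$, and that two distinct points are unrelated by $\after$ exactly when they are spacelike separated (Lemma~\ref{lemma:relations between causal orders}). Thus for (i) I must verify, and for (ii) refute, the assertion that whenever $x \after y_1, y_2, z$ with $y_1 \neq y_2$ spacelike separated, there exists $t$ with $x \after t \after z$ and $t \after y_1$ or $t \after y_2$.

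For part (i), the key simplification is to pass to light-cone coordinates $a(p) = (p^0 + p^1)/2$ and $b(p) = (p^0 - p^1)/2$ on $\mathbb{R}^2$, in which Equation~\eqref{eq:caus} becomes the coordinatewise order: $p \caus q$ iff $a(p) \le a(q)$ and $b(p) \le b(q)$. Placing $x$ at the origin, the hypotheses put $y_1, y_2, z$ in the nonnegative quadrant with $y_1, y_2$ incomparable, so without loss of generality $a(y_1) < a(y_2)$ and $b(y_1) > b(y_2)$, whence $x \after y_i$ forces $b(y_1) > 0$ and $a(y_2) > 0$. I then take $t$ to be the corner $\big(\min(a(z),a(y_i)),\,\min(b(z),b(y_i))\big)$ of the rectangle of points lying below both $z$ and $y_i$ in this order, for a suitable $i \in \{1,2\}$, perturbing slightly into the rectangle when necessary to ensure $t \notin \{x, z, y_i\}$. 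The argument reduces to checking that at least one of the two rectangles is nondegenerate; each collapses to $\{x\}$ only under coordinate conditions that together force $z = x$, so they cannot both hold.

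For part (ii), I exhibit an explicit counterexample available precisely when $n \ge 2$. Put $x$ at the origin and choose three \emph{distinct} unit vectors $\vec{u}_1, \vec{u}_2, \vec{u}_3 \in \mathbb{R}^n$ (possible iff $n \ge 2$, since $n = 1$ offers only two null directions), setting $y_i = (1, \vec{u}_i)$ and $z = (1, \vec{u}_3)$. By Equation~\eqref{eq:horismos} each is horismos-related to $x$, so $x \after y_1, y_2, z$, and $y_1, y_2$ are spacelike separated as they share a time coordinate. The crux is a sub-lemma in the spirit of Lemma~\ref{lem:mink:horismos}: if $x \horismos z$ and $x \caus t \caus z$, then equality is forced throughout $\norm{\vec{t}} + \norm{\vec{u}_3 - \vec{t}} \ge \norm{\vec{u}_3}$, pinning $t$ onto the lightline, $t = (\lambda, \lambda\vec{u}_3)$ with $\lambda \in (0,1)$. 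For any such $t$ the reverse triangle inequality gives $\norm{\vec{u}_k - \lambda\vec{u}_3} > 1 - \lambda = y_k^0 - t^0$ whenever $\vec{u}_k \neq \vec{u}_3$, so $t \ncaus y_1$ and $t \ncaus y_2$; hence no witness $t$ exists and the correspondent fails.

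The main obstacle I anticipate lies in the bookkeeping of part (i): the rectangle corner can coincide with one of the forbidden points $x$, $z$, or $y_i$ exactly when $z$ or the $y_i$ sit on a lightline through $x$, so the construction of $t$ must be phrased carefully enough to cover these boundary configurations and to confirm the two degeneracy conditions are incompatible. Part (ii) is conceptually cleaner, its only subtlety being the standard characterisation of equality in the (reverse) triangle inequality that forces the null directions to coincide.
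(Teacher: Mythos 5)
Your proposal is correct and takes essentially the same route as the paper's proof, which is only an outline deferring to \cite[Lemma 6.2]{Shapirovsky2005}: for (i), in $\mathbb{R}^2$ a common point $t$ always exists because a third point is either on a light line with one of the $y_i$ or in the interior (your light-cone coordinates turn this into the product order, with $t$ a slight perturbation of the meet of $z$ and $y_i$, and your check that both rectangles collapse only if $z=x$ correctly handles the boundary cases the paper glosses over); for (ii), your three distinct null directions with the triangle-inequality rigidity sub-lemma is exactly the paper's ``three separate light lines with no common point'' countermodel, made rigorous. In short, your write-up supplies full details where the paper gives only a sketch, but the underlying geometry is identical.
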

\begin{proof}
The argument is similar to~\cite[Lemma 6.2]{Shapirovsky2005}, but we give an outline of the proof.
In $\mathbb{R}^2$, there are only two light lines where points can be and so a third point will either be on a light line with a point already on it or in the interior (so there will always be a common point, $t$); but in $\mathbb{R}^n$, one can pick points on three separate light lines and there will be no common point between the three of them after the root point, $x$.
\end{proof}

It turns out that $\aftertwoformula$ not only holds in two-dimensional Minkowski spacetime, but any two-dimensional spacetime.

\begin{theoremrep}
Let $M$ be a two-dimensional spacetime.
Then, 
\begin{equation*}
\mentail{\mframe{M}{\after}}{}{\aftertwoformula}.    
\end{equation*}
\end{theoremrep}
\begin{proof}
\change{
Let $x \after y_1, y_2, z$ with $y_1 \neq y_2$ and $y_i \nafter y_j$.
We need to prove there exists $t\in M$ such that $x\after t\after z, y_i$ for some $i$.
Again, in the case that $x=y_i$ or $x=z$ simply take $t=x$. Thus assume all points are distinct from~$x$, so $\after$ is described via Lemma~\ref{lemma:relations between causal orders}.

If $x \chron y_i$ for some $i$ and $x \chron z$, then by 2-density of $\chron$ we are done.

Suppose now that $x \horismos y_1, y_2, z$, and let $\delta_1,\delta_2,\gamma$ be the corresponding causal curves.
Since $x\horismos y_i$, there are no timelike curves from $x$ to $y_i$, and hence by Theorem~\ref{thm:2.22} each $\delta_i$ is an achronal lightlike geodesic (up to reparametrisation).
Similarly, since $x\horismos z$, the curve $\gamma$ is an achronal lightlike geodesic from $x$ to $z$.

Choose a convex normal neighbourhood $U_x$ of $x$, and consider the restrictions of $\delta_1,\delta_2,\gamma$ to~$U_x$. In a two-dimensional spacetime there are exactly two future-directed lightlike velocities at $x$, and hence exactly two future-directed lightlike geodesics emanating from $x$ in $U_x$.
Moreover, since $y_1$ and $y_2$ are spacelike separated, the geodesics $\delta_1$ and $\delta_2$ cannot coincide, and so the velocities of $\delta_1$ and $\delta_2$ at~$x$ correspond to the two distinct lightlike directions in $T_xM$.
Since $\gamma$ is also lightlike, its velocity at~$x$ must therefore be equal to that of, say, $\delta_1$.
Pick a point $t\in \im(\gamma)\cap \im(\delta_1)\cap U_x\setminus\{x\}$ sufficiently close to $x$ so that $t$ lies on the subcurves from $x$ to $z$ and from $x$ to $y_1$, so that $x\after t\after y_1,z$, as desired.

Finally, consider the scenario $x \horismos y_1, y_2$ and $x \chron z$.
Let $\delta_1,\delta_2$ be the achronal lightlike geodesics from $x$ to $y_1,y_2$ (as above), and let $\gamma$ be a timelike curve from $x$ to $z$.
Pick a point ${z'\in \im(\gamma)\setminus\{x,z\}}$ sufficiently close to $x$ so that $x\chron z'\chron z$.
Since $I^{-}(z')$ is open and contains $x$, and $\delta_1$ is continuous with $\delta_1(0)=x$, there exists ${t\in \im(\delta_1)\setminus\{x\}}$ sufficiently close to $x$ such that $t\in I^{-}(z')$.
Then ${x\after t\after y_1}$. Moreover $t\chron z'$ implies $t\after z'\after z$, hence $t\after z$.
Thus $x\after t\after y_1,z$, completing the proof.
}
\end{proof}

Denote $\aftertwologic := \mathbf{D4} + ad + \aftertwoformula$.
It is clear from Theorem~\ref{thm:aftertwoexpress} that $\afterlogic \subset \aftertwologic$.
\begin{corollary}
    Let $M$ be a $2$-dimensional spacetime. Then $\aftertwologic \subseteq \modal{\mframe{M}{\after}}$.
\end{corollary}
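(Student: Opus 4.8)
The plan is to reduce this inclusion of logics to a routine verification of frame validities, exploiting two structural facts: $\modal{\mframe{M}{\after}}$ is, by its definition as the set of formulae valid under all evaluations of the frame, automatically a normal modal logic (closed under modus ponens, necessitation, and uniform substitution, and containing $\mathbf{K}$); and $\aftertwologic = \mathbf{D4} + ad + \aftertwoformula$ is, by definition, the \emph{smallest} normal modal logic containing its generating axioms. Consequently, to establish $\aftertwologic \subseteq \modal{\mframe{M}{\after}}$ it suffices to check that each generator --- the axioms comprising $\mathbf{D4}$ (namely $K$, $a4$, $aD$), the density axiom $ad$, and the formula $\aftertwoformula$ --- is valid on $\mframe{M}{\after}$. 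Once every generator lies in the normal logic $\modal{\mframe{M}{\after}}$, minimality of $\aftertwologic$ forces the whole logic to be contained in it.

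The first block of generators requires no new work. By Corollary~\ref{cor:spacetimeclass}(iii) we already have $\mathbf{D4} + ad \subseteq \modal{\mframe{\genericworld}{\after}}$ for \emph{any} spacetime $\genericworld$, which in particular applies to our two-dimensional $M$. This encodes the relational facts recorded in Lemma~\ref{lem:relcommonproperties} that $\after$ is serial and transitive, together with density of $\after$, which comes from the fact that any causal curve witnessing $x\after y$ can be subdivided at an intermediate point. Hence $K$, $a4$, $aD$, and $ad$ are all valid on $\mframe{M}{\after}$.

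The only remaining generator is $\aftertwoformula$, and this is precisely the content of the immediately preceding Theorem, which establishes $\mentail{\mframe{M}{\after}}{}{\aftertwoformula}$ for every two-dimensional spacetime $M$. Combining this with the previous paragraph, all generators of $\aftertwologic$ belong to the normal modal logic $\modal{\mframe{M}{\after}}$, and the desired inclusion $\aftertwologic \subseteq \modal{\mframe{M}{\after}}$ follows immediately.

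I expect the corollary itself to carry essentially no independent difficulty: the genuine mathematical content lies entirely upstream, in the preceding Theorem and the geometric lemmas underpinning it --- in particular the local-to-global transfer via the Minkowskian chart $\Phi$ of Theorem~\ref{theorem:locally Minkowski} and the fact that only two light lines pass through the origin in $\mframe{\mathbb{R}^2}{\after}$. The one point worth stating explicitly is the appeal to the definition of $\aftertwologic$ as the smallest normal modal logic extending $\mathbf{D4} + ad$ by $\aftertwoformula$; granting that, validity of each axiom on the frame yields the inclusion with nothing further to prove.
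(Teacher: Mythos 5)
Your proposal is correct and matches the paper's (implicit) argument exactly: the paper derives this corollary by combining Corollary~\ref{cor:spacetimeclass}(iii), which gives $\mathbf{D4} + ad \subseteq \modal{\mframe{M}{\after}}$ for any spacetime, with the immediately preceding theorem validating $\aftertwoformula$ on two-dimensional spacetimes, the inclusion then following since $\modal{\mframe{M}{\after}}$ is a normal modal logic and $\aftertwologic$ is the smallest one containing those generators. Your explicit appeal to minimality and closure of the frame logic is precisely the routine step the paper leaves unstated.
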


In Lemma~\ref{lemma:aftertwoinminkowski}, we were shown how $\aftertwoformula$ only holds in two dimensions, providing a separation from $1+n$-dimensional Minkowski spacetime.
However, to show a full separation of the logics of 2-dimensional spacetimes from higher dimensional ones, it needs to be shown that $\aftertwoformula$ does not hold in higher dimensional spacetimes.
We note that non-cNTV spacetimes are reflexive in $\after$, meaning that $\aftertwoformula$ holds in those spacetimes.
However, for cNTV spacetimes of dimension greater than 3, it is likely that $\aftertwoformula$ does not hold.
Proving this requires that any high dimensional, cNTV spacetime has at least 3 different light lines from a point irreflexive in $\after$.

\shelf{Prove $\aftertwoformula$ does not hold in $n \geq 3$ spacetime, i.e., $\modal{\mframe{M}{\after}} \subset \modal{\mframe{M}{\after}} + \aftertwoformula$}

\begin{conjecture}
    Let $n \geq 3$.
    For any cNTV $n$-dimensional spacetime, $M$, we have that
    $\nmentail{\mframe{M}{\after}}{}{\aftertwoformula}$ ($\aftertwologic \nsubseteq \modal{\mframe{M}{\after}}$).
\end{conjecture}

\subsection{3,2-Density, After, and two-dimensional Minkowski Spacetime}
\label{sec:separable:minkowski2}
The reasoning for $\aftertwoformula$ holding (or not) in Minkowski spacetime is the same for \emph{3,2-density}, described by the formula
\begin{equation*}
    ad_{3,2} := \poss p_1 \land \poss p_2 \land \poss p_3
                \Rightarrow
                \poss (\poss p_1 \land \poss p_2) \lor \poss (\poss p_1 \land \poss p_3) \lor \poss (\poss p_2 \land \poss p_3),
\end{equation*}
which was shown in~\cite{Shapirovsky2005} to hold in two-dimensional Minkowski spacetime but not in higher $1+n$-dimensional Minkowski spacetime ($n \geq 2$).
It was also shown therein that $\mK + ad_{3,2} \vdash ad$.

Here we show that there is a close relation between $ad_{3,2}$ and $\aftertwoformula$.
\begin{theorem}
    We have that
    \begin{enumerate}[label=(\roman*)]
        \item $\mathbf{K4} + ad + \aftertwoformula \vdash ad_{3,2}$; \label{thm:aftertwod32:d32}
        \item $\mathbf{K4} + ad_{3,2} \nvdash \aftertwoformula$; and \label{thm:aftertwod32:notaftertwo}
        \item $\mathbf{K4} + \afterformula + ad_{3,2} \vdash \aftertwoformula$. \label{thm:aftertwod32:afterd32}
    \end{enumerate}
\end{theorem}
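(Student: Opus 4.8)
The plan is to treat the three parts separately: part~(iii) admits a short purely syntactic derivation, part~(i) is cleanest as a semantic argument through the Sahlqvist correspondents, and part~(ii) requires an explicit finite counter-frame. I expect the genuine obstacle to lie in part~(ii), namely engineering a single transitive frame that validates $ad_{3,2}$ under \emph{every} valuation while still refuting $\aftertwoformula$; the tension is that $ad_{3,2}$ and $\aftertwoformula$ are close, so the construction must exploit that $ad_{3,2}$ can be witnessed by the ``wrong'' pair.

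For part~(iii) I would argue syntactically. Write $\hat{p}_1 := p_1 \land \lnot p_2 \land \necc \lnot p_2$ and $\hat{p}_2 := p_2 \land \lnot p_1 \land \necc \lnot p_1$, so that the antecedent of $\aftertwoformula$ is exactly $\poss\hat{p}_1 \land \poss\hat{p}_2 \land \poss q$. Instantiating the axiom $ad_{3,2}$ under the uniform substitution $p_1 \mapsto \hat{p}_1$, $p_2 \mapsto \hat{p}_2$, $p_3 \mapsto q$ yields, from this antecedent, the disjunction $\poss(\poss\hat{p}_1 \land \poss\hat{p}_2) \lor \poss(\poss\hat{p}_1 \land \poss q) \lor \poss(\poss\hat{p}_2 \land \poss q)$. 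The last two disjuncts already imply the consequent of $\aftertwoformula$, since $\hat{p}_i \Rightarrow p_i$ propositionally and $\poss$ is monotone. For the first disjunct I would conjoin it with $\poss q$ (available from the antecedent): the result $\poss(\poss\hat{p}_1 \land \poss\hat{p}_2) \land \poss q$ is precisely the antecedent of $\afterformula$, whose consequent is again the consequent of $\aftertwoformula$. Notably this uses neither density nor transitivity, only the two axioms together with closure under substitution and monotonicity.

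For part~(i) I would pass to the Sahlqvist frame correspondents and show $ad_{3,2}$ is valid on every transitive, dense frame satisfying the first-order condition of $\aftertwoformula$; canonicity of the Sahlqvist logic $\mathbf{K4}+ad+\aftertwoformula$ then upgrades frame validity to derivability. Given a point $x$ with successors $w_1,w_2,w_3$ witnessing $\poss p_1, \poss p_2, \poss p_3$, I would split on whether some pair $w_i,w_j$ coincides or is $\genericrelation$-related. If so, density inserts an intermediate $s$ with $x \genericrelation s \genericrelation w_i$, and transitivity gives $s \genericrelation w_j$, so $s$ witnesses $\poss(\poss p_i \land \poss p_j)$. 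Otherwise the three witnesses are pairwise spacelike, and the $\aftertwoformula$-correspondent applied with $y_1=w_1$, $y_2=w_2$, $z=w_3$ produces $t$ with $x\genericrelation t\genericrelation w_3$ and $t\genericrelation w_1$ or $t\genericrelation w_2$, yielding one of the remaining disjuncts of $ad_{3,2}$. The only delicate point is confirming the case split is exhaustive and that density is genuinely needed in the related-pair case (it is, which is why $ad$ appears in the hypothesis).

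For part~(ii) I would exhibit a finite transitive frame on $\{x,a,b,y_1,y_2,z\}$ with generating edges $x\genericrelation a$, $x\genericrelation b$, $a\genericrelation y_1,y_2$, $b\genericrelation z$, all points reflexive except the root $x$, then closed under transitivity (reflexivity also secures density, consistent with $\mathbf{K4}+ad_{3,2}\vdash ad$). Taking the valuation $p_1=\{y_1\}$, $p_2=\{y_2\}$, $q=\{z\}$ refutes $\aftertwoformula$ at $x$: the antecedent holds via $y_1,y_2,z$ (the $\necc\lnot p_j$ clauses hold because the only successor of each $y_i$ is itself), yet no successor of $x$ sees both a $p_i$-point and the $q$-point, since $a$ sees $y_1,y_2$ but not $z$ while $b$ sees $z$ but neither $y_i$. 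To confirm the frame still validates $ad_{3,2}$ I would observe that every reflexive point $u$ satisfies it trivially, since $u\genericrelation u$ lets $u$ itself serve as the intermediate $t$ seeing all of its successors; this reduces the check to the single irreflexive root $x$, where a short enumeration of triples of successors shows each triple contains a pair with a common predecessor in $\genericrelation(x)$ (for instance $a$ for $\{y_1,y_2\}$ and $b$ for any pair containing $z$). Soundness then gives $\aftertwoformula \notin \mathbf{K4}+ad_{3,2}$. The hard part is precisely arranging that $y_1,y_2$ acquire a common predecessor (making $ad_{3,2}$ hold) while no predecessor links $z$ to either $y_i$ (making $\aftertwoformula$ fail).
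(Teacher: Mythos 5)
Your proposal is correct and follows essentially the same route as the paper in all three parts: the same case split for (i) (related/coinciding witnesses handled by density plus transitivity, pairwise-unrelated witnesses handled by $\aftertwoformula$, which the paper performs at the level of the valuation via $y_1 \models p_2 \lor \poss p_2$ rather than via the frame correspondent), essentially the same finite counter-frame for (ii) (the paper's model places $q$ directly on a reflexive child of the root instead of behind your extra point $b$, but the structure and the refutation at the root are identical), and the identical substitution-into-$ad_{3,2}$-then-apply-$\afterformula$ derivation for (iii). Your minor sharpenings — invoking Sahlqvist canonicity explicitly in (i), verifying frame validity of $ad_{3,2}$ at the irreflexive root in (ii), and observing that (iii) needs neither transitivity nor density — are all sound and, if anything, make the argument more rigorous than the paper's model-level phrasing.
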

\begin{proof}
\ref{thm:aftertwod32:d32}: let $\mathcal{M} = \mmodel{\genericworld}{\genericrelation}{\genericeval}$ be a $\mathbf{K4} + ad + \aftertwoformula$ model and $x \in \genericworld$ such that $\mentail{\mathcal{M}}{x}{\poss p_1 \land \poss p_2 \land \poss p_3}$.
We then have $y_1, y_2, y_3 \in \genericworld$ such that $\mentail{\mathcal{M}}{y_i}{p_i}$ and $x \genericrelation y_i$.
It is clear by transitivity and density that if $\mentail{\mathcal{M}}{y_1}{p_2 \lor \poss p_2}$, then $\mentail{\mathcal{M}}{x}{\poss(\poss p_1 \land \poss p_2)}$ (similarly if $\mentail{\mathcal{M}}{y_2}{p_1 \lor \poss p_1}$).

If neither of the entailments hold then we must have $\mentail{\mathcal{M}}{y_i}{\lnot p_j \land \necc \lnot p_j}$ for $i, j \in \{1,2\}, i\neq j$.
Therefore, we must have that $\mentail{\mathcal{M}}{x}{\poss(p_1 \land \lnot p_2 \land \necc \lnot p_2) \land \poss(p_2 \land \lnot p_1 \land \necc \lnot p_1) \land \poss p_3}$.
By $\aftertwoformula$, we then have that $\mentail{\mathcal{M}}{x}{\poss(\poss p_1 \land \poss p_3) \lor \poss(\poss p_2 \land \poss p_3)}$.
Thus, generally, we have that $\mentail{\mathcal{M}}{x}{\poss(\poss p_1 \land \poss p_2) \lor \poss(\poss p_1 \land \poss p_3) \lor \poss(\poss p_2 \land \poss p_3)}$.
As $x$ and $\mathcal{M}$ are general we have that $\mathbf{K4} + ad + \aftertwoformula \vdash ad_{3,2}$.

\ref{thm:aftertwod32:notaftertwo}: a model depicted in Figure~\ref{fig:d32notafter2} acts as a counterexample.
One of $p_1$ or $p_2$ should share a non-root common point with $q$ but they do not.
\begin{figure}[t]
    \centering
    \begin{tikzpicture}[nodes={draw, circle}, ->,]
    \node (s) {} [grow'=up]
    child {node[refl] (1) [] {}
        child {node[refl,label=left:{$p_1$}] (10) [] {}}
        child {node[refl,label=left:{$p_2$}] (11) [] {}}
    }
    child {node[refl,label=right:{$q$}] (0) [] {}
    }
    ;
    \end{tikzpicture}
    \caption{A finite $\mathbf{K4} + ad_{3,2}$ model that does not have $\aftertwoformula$ on the root.}
    \label{fig:d32notafter2}
\end{figure}

\ref{thm:aftertwod32:afterd32}: let $\mathcal{M} = \mmodel{\genericworld}{\genericrelation}{\genericeval}$ be a $\mathbf{K4} + \afterformula + ad_{3,2}$ model and $x \in \genericworld$ such that $\mentail{\mathcal{M}}{x}{\poss(p_1 \land \lnot p_2 \land \necc \lnot p_2) \land \poss(p_2 \land \lnot p_1 \land \necc \lnot p_1) \land \poss q}$.
Firstly, by $ad_{3,2}$ we have that
\begin{equation*}
\begin{aligned}
    \mathcal{M}, x  \models &
    \poss (\poss(p_1 \land \lnot p_2 \land \necc \lnot p_2) \land \poss (p_2 \land \lnot p_1 \land \necc \lnot p_1)) \lor \\ &
    \poss (\poss(p_1 \land \lnot p_2 \land \necc \lnot p_2) \land \poss q) \lor \\ &
    \poss (\poss(p_2 \land \lnot p_1 \land \necc \lnot p_1) \land \poss q).
\end{aligned}
\end{equation*}

If $\mentail{\mathcal{M}}{x}{\poss (\poss(p_1 \land \lnot p_2 \land \necc \lnot p_2) \land \poss q)}$, then $\mentail{\mathcal{M}}{x}{\poss (\poss p_1 \land \poss q)}$ and, therefore, $\mentail{\mathcal{M}}{x}{\poss (\poss p_1 \land \poss q) \lor \poss (\poss p_2 \land \poss q)}$.
The case is similar for $\mentail{\mathcal{M}}{x}{\poss (\poss(p_2 \land \dots) \land \poss q)}$.

Suppose instead $\mentail{\mathcal{M}}{x}{\poss (\poss(p_1 \land \lnot p_2 \land \necc \lnot p_2) \land \poss (p_2 \land \lnot p_1 \land \necc \lnot p_1))}$.
Then as $\mentail{\mathcal{M}}{x}{\poss q}$, by $\afterformula$, we get that $\mentail{\mathcal{M}}{x}{\poss (\poss p_1 \land \poss q) \lor \poss (\poss p_2 \land \poss q)}$.

Thus, we have $\mentail{\mathcal{M}}{x}{\aftertwoformula}$.
Since $x$ and $\mathcal{M}$ are general then $\mathbf{K4} + \afterformula + ad_{3,2} \vdash \aftertwoformula$.
\end{proof}

This gives a result on the normal modal logics of the formulae.
\begin{corollary}
    $\mathbf{D4da_2} = \mathbf{D4da} + ad_{3,2}$.
\end{corollary}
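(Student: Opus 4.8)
The plan is to prove the equality of the two normal modal logics by establishing mutual inclusion, viewing each logic as the smallest normal modal logic generated by an explicit axiom set. Unfolding the abbreviations, $\afterlogic + ad_{3,2} = \mathbf{D4} + ad + \afterformula + ad_{3,2}$, whereas $\aftertwologic = \mathbf{D4} + ad + \aftertwoformula$. Since both logics contain $\mathbf{D4}$ and $ad$ among their generators, it suffices in each direction to show that the remaining generators of one logic are theorems of the other; the containment of each logic in any normal extension of its generators is then automatic.

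First I would treat the inclusion $\afterlogic + ad_{3,2} \subseteq \aftertwologic$. Here the generators to account for are $\afterformula$ and $ad_{3,2}$. That $\aftertwologic \vdash \afterformula$ is Theorem~\ref{thm:aftertwoexpress}\ref{thm:aftertwoexpress:expressive}, since $\mathbf{K4} + \aftertwoformula \vdash \afterformula$ and $\aftertwologic \supseteq \mathbf{K4} + \aftertwoformula$. That $\aftertwologic \vdash ad_{3,2}$ is part~\ref{thm:aftertwod32:d32} of the preceding theorem, since $\mathbf{K4} + ad + \aftertwoformula \vdash ad_{3,2}$ and $\aftertwologic \supseteq \mathbf{K4} + ad + \aftertwoformula$. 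Hence every generator of $\afterlogic + ad_{3,2}$ lies in $\aftertwologic$, giving the inclusion.

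For the reverse inclusion $\aftertwologic \subseteq \afterlogic + ad_{3,2}$, the only generator left to verify is $\aftertwoformula$. This is exactly part~\ref{thm:aftertwod32:afterd32} of the preceding theorem, which states $\mathbf{K4} + \afterformula + ad_{3,2} \vdash \aftertwoformula$; since $\afterlogic + ad_{3,2} \supseteq \mathbf{K4} + \afterformula + ad_{3,2}$, we obtain $\afterlogic + ad_{3,2} \vdash \aftertwoformula$. Combining the two inclusions yields the claimed identity $\aftertwologic = \afterlogic + ad_{3,2}$.

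Since the corollary reduces to assembling three already-established derivability facts, I expect no genuine obstacle beyond careful bookkeeping of which generators each logic shares. The one point worth flagging is the role of part~\ref{thm:aftertwod32:notaftertwo}, which is \emph{not} used in proving the equality: it records that $\mathbf{K4} + ad_{3,2} \nvdash \aftertwoformula$, so the derivation of $\aftertwoformula$ in part~\ref{thm:aftertwod32:afterd32} genuinely needs $\afterformula$ as well. This is what prevents $\afterformula$ from being dropped, and hence explains why the left-hand logic must retain it rather than collapsing to $\mathbf{D4} + ad + ad_{3,2}$.
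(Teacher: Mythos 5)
Your proof is correct and matches the paper's (implicit) argument exactly: the corollary is meant as an immediate assembly of Theorem~\ref{thm:aftertwoexpress}\ref{thm:aftertwoexpress:expressive} and parts~\ref{thm:aftertwod32:d32} and~\ref{thm:aftertwod32:afterd32} of the preceding theorem, via mutual inclusion of the generated normal logics, just as you do. Your closing remark on the role of part~\ref{thm:aftertwod32:notaftertwo} is also accurate — it is not needed for the equality but explains why $\afterformula$ cannot be dropped.
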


This means that when working in 2-dimensional spacetime, we have a choice of axioms to work from.
One can either work with 3,2-density and $\afterformula$, or one can work with standard density and $\aftertwoformula$ instead.

\begin{remark}
We note that $\aftertwoformula$ also bears a similarity to the modal version of $\text{Rob}^2$ in~\cite{Phillips1998}, defined by
\begin{equation*}
\begin{aligned}
    & Q_i := p_i \Rightarrow \bigwedge_{\substack{1 \leq j \leq 3\\j \neq i}} \lnot p_j \land \necc \lnot p_j, \\
    & \text{Rob}^2 :=
    \poss p_1 \land \poss p_2 \land \poss p_3 \land
    \necc Q_1 \land \necc Q_2 \land \necc Q_3
    \Rightarrow \\ & \qquad
    \poss (\poss p_1 \land \poss p_2) \lor \poss (\poss p_1 \land \poss p_3) \lor \poss (\poss p_2 \land \poss p_3).
\end{aligned}
\end{equation*}
We leave whether $(\mathbf{K4} +)~\aftertwoformula \vdash \text{Rob}^2$ (or vice versa) as an open problem, but such an entailment would not be surprising.
This is because the generalisation of $\text{Rob}^2$ ($\text{Rob}^n$) captures the idea of having $n$ axes, whereas a generalisation of the $\aftertwoformula$ ($a\alpha_{n}f$) would capture the idea of an interior where any point inside can be reached from any of the $n$ border lines.
It so happens that in the $n=2$ case, these properties are equivalent, but in general they may not be.
These ideas were reflected on in Remark~\ref{rmk:diffafter} and the resulting relations depicted in Figure~\ref{fig:diffafter}.
\end{remark}

We end this section by forming the following conjecture.
\begin{conjecture}
    Let $n \geq 3$.
    For any cNTV $n$-dimensional spacetime, $M$, we have that
    $\nmentail{\mframe{M}{\after}}{}{\aftertwoformula}$ ($\aftertwologic \nsubseteq \modal{\mframe{M}{\after}}$).
\end{conjecture}

\section{Effects of Causal Ladder Properties on Modal Logics}
\label{sec:modalcausalladder}
We now analyse the modal logic of spacetimes on different levels of the causal ladder.
The proofs are fairly obvious throughout, simply relying on the properties of the relevant causal relation.

We have the following obvious result.
\begin{corollary}
    Let $\genericworld$ be a spacetime and $\chron$ and $\after$ be the respective causal relations, then:
    \begin{enumerate}[label=(\roman*)]
        \item if $\chron$ is reflexive, $\mathbf{S4} \subseteq \modal{\mframe{\genericworld}{\chron}}$;
        \item if $\after$ is reflexive, $\mathbf{S4} \subseteq \modal{\mframe{\genericworld}{\after}}$.
    \end{enumerate}
\end{corollary}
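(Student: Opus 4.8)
The plan is to reduce both claims to soundness of the $\mathbf{S4}$ axioms, mirroring the argument already used for $\chroneq$ and $\caus$ in Corollary~\ref{cor:spacetimeclass}. Since $\mathbf{S4} = \mathbf{K4} + aT$, its only axioms beyond $\mK$ are $a4$ (corresponding to transitivity) and $aT$ (corresponding to reflexivity). Because a normal modal logic consists exactly of the formulae derivable from its axioms using the rules of $\mK$ (modus ponens and Nec), and because those rules preserve frame-validity, I would only need to check that both $a4$ and $aT$ are valid on the relevant frame.

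For (i), I would note that $\chron$ is transitive by Lemma~\ref{lem:relcommonproperties}, making $a4$ valid on $\mframe{\genericworld}{\chron}$, while the hypothesis that $\chron$ is reflexive gives validity of $aT$. Hence all $\mathbf{S4}$ axioms hold on this frame and, by closure under the $\mK$ rules, so do all $\mathbf{S4}$ theorems, yielding $\mathbf{S4} \subseteq \modal{\mframe{\genericworld}{\chron}}$. For (ii) I would argue identically, using transitivity of $\after$ from Lemma~\ref{lem:relcommonproperties} together with the assumed reflexivity of $\after$.

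I expect no real obstacle here: the entire content is matching the two $\mathbf{S4}$ axioms to the two frame conditions, with transitivity automatic from Lemma~\ref{lem:relcommonproperties} and reflexivity supplied exactly by the hypothesis in each case. This is simply the general soundness observation underlying Corollary~\ref{cor:spacetimeclass}, specialised to $\chron$ and $\after$ once the extra reflexivity assumption is in place.
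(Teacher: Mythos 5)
Your proposal is correct and takes exactly the paper's approach: the paper states this corollary without proof as an ``obvious result,'' relying on precisely the soundness argument you spell out --- transitivity of $\chron$ and $\after$ from Lemma~\ref{lem:relcommonproperties} validates $a4$, the reflexivity hypothesis validates $aT$, and frame validity is preserved under the rules generating the normal modal logic (as in the justification of Corollary~\ref{cor:spacetimeclass}). Nothing further is needed.
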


\subsection{Totally and Non-totally Vicious}
Totally vicious spacetimes are reflexive in $\chron$ and, consequently, in $\after$.
Note that $\chron$ and $\chroneq$ become indistinguishable from one another ($x \chron y$ iff $x \chroneq y$), and similarly for $\after$ and $\caus$.
What becomes more apparent is that $\chron$ and $\caus$ are indistinguishable.
\begin{lemma}
    Let $\genericworld$ be a totally vicious spacetime and $x, y \in \genericworld$.
    Then $x \chron y$ iff $x \caus y$.
\end{lemma}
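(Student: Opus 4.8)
The statement to prove is that in a totally vicious spacetime, $x \chron y$ iff $x \caus y$ for all $x, y \in \genericworld$. The plan is to prove both directions separately, exploiting the defining property of total viciousness — that $\chron$ is reflexive, i.e.\ $z \chron z$ for all $z \in \genericworld$ — together with the relational facts already collected in Lemma~\ref{lem:relcommonproperties} and the Push-up Rule.

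The forward direction ($x \chron y \implies x \caus y$) is immediate and requires no viciousness hypothesis at all: by Lemma~\ref{lemma:relations between causal orders} we already have the chain of implications $x \chron y \implies x \after y \implies x \caus y$, so this direction is free. The substance of the lemma lies entirely in the converse.

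For the converse ($x \caus y \implies x \chron y$) I would split on whether $x = y$ or $x \neq y$. If $x = y$, then $x \chron y$ holds directly by reflexivity of $\chron$ (the total viciousness assumption), so there is nothing to do. If $x \neq y$, then since $x \caus y$ and the points are distinct, there must exist a genuine causal curve from $x$ to $y$, so $x \after y$ by the definition of the after relation. Now I invoke total viciousness at the source: we have $x \chron x$, and composing this with $x \after y$ (equivalently $x \caus y$) via the Push-up Rule — specifically the first clause, $x \chron x$ and $x \caus y$ implies $x \chron y$ — yields $x \chron y$, as desired.

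The only subtlety, and hence the step I would treat most carefully, is the application of the Push-up Rule: one must make sure to feed it a chronological relation on the \emph{left} (from $x$ to itself) and the causal relation on the right, matching the form $x \chron z$, $z \caus y \implies x \chron y$ with $z = x$. This is where the reflexivity of $\chron$ does the real work, converting the merely causal relation $x \caus y$ into a chronological one. Everything else is bookkeeping with the definitions, so I do not expect any genuine obstacle.
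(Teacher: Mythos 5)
Your proposal is correct and matches the paper's proof in essence: the forward direction is immediate from Lemma~\ref{lemma:relations between causal orders}, and the converse uses total viciousness plus the Push-up Rule. The only (immaterial) difference is that you apply reflexivity at the source ($x \chron x$ with the first clause of the rule) whereas the paper applies it at the target ($y \chron y$ with the second clause); your case split on $x = y$ and the detour through $x \after y$ are unnecessary but harmless.
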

\begin{proof}
    If $x \chron y$, there is a timelike path from $x$ to $y$, which is also a causal path.
    Therefore, $x \caus y$.
    If $x \caus y$, then we have that $x \chron y$ since $y \chron y$ and the push-up rule is used to obtain the result.
\end{proof}
This leads to the following result.
\begin{theorem}
    Let $\genericworld$ be a totally vicious spacetime.
    Then,
    \begin{equation*}
        \mathbf{S4} \subseteq \modal{\mframe{\genericworld}{\chron}} = \modal{\mframe{\genericworld}{\chroneq}} = \modal{\mframe{\genericworld}{\after}} = \modal{\mframe{\genericworld}{\caus}}.
    \end{equation*}
\end{theorem}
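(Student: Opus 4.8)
The plan is to reduce the entire statement to the single observation that, in a totally vicious spacetime, all four relations $\chron$, $\chroneq$, $\after$, $\caus$ coincide as subsets of $\genericworld \times \genericworld$. Once this relational collapse is established, the three equalities in the display are immediate, since each $\modal{\cdot}$ is computed from literally the same frame.

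First I would pin down the coincidence of the relations. The preceding lemma gives $x \chron y$ iff $x \caus y$, so $\chron$ and $\caus$ are equal as relations. Lemma~\ref{lemma:relations between causal orders} supplies the chain $x \chron y \implies x \after y \implies x \caus y$, which sandwiches $\after$ between the two now-equal relations; hence $\chron = \after = \caus$. For $\chroneq$, total viciousness means $x \chron x$ for every $x \in \genericworld$, so $\chron$ is already reflexive and its reflexive completion $\chroneq$ adds nothing, giving $\chroneq = \chron$. Putting these together yields the equality $\chron = \chroneq = \after = \caus$ of relations, so the four frames $\mframe{\genericworld}{\chron}$, $\mframe{\genericworld}{\chroneq}$, $\mframe{\genericworld}{\after}$, $\mframe{\genericworld}{\caus}$ are identical. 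Since the modal logic $\modal{\cdot}$ of a frame depends only on the underlying relation, the chain of equalities follows at once.

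For the inclusion $\mathbf{S4} \subseteq \modal{\mframe{\genericworld}{\chron}}$, I would appeal to the frame-correspondence for the defining axioms: $\mathbf{S4} = \mathbf{K4} + aT$ characterises frames that are both transitive and reflexive. Here $\chron$ is transitive by Lemma~\ref{lem:relcommonproperties} and reflexive by the definition of total viciousness, so both properties hold and the inclusion is immediate. Equivalently, Corollary~\ref{cor:spacetimeclass} already gives $\mathbf{S4} \subseteq \modal{\mframe{\genericworld}{\caus}}$, and one may simply transport this inclusion along the frame equality established above.

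I do not anticipate a genuine obstacle: the content of the theorem is carried entirely by the relational collapse. The only point requiring any care is to verify that the strict after relation $\after$ does not escape the sandwich between $\chron$ and $\caus$, and this is handled cleanly precisely because total viciousness forces reflexivity everywhere, so even the usual distinction between the strict variant and its reflexive companion vanishes.
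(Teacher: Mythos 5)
Your proposal is correct and matches the paper's (implicit) argument: the paper derives the theorem directly from the preceding lemma that $x \chron y$ iff $x \caus y$ in a totally vicious spacetime, with the sandwich $\chron \subseteq \after \subseteq \caus$ and the reflexivity of $\chron$ collapsing all four relations to a single frame, and the $\mathbf{S4}$ inclusion coming from transitivity plus reflexivity as in Corollary~\ref{cor:spacetimeclass}. Nothing further is needed.
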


Once we move onto the causal ladder by considering non-totally vicious spacetimes, the logics that can be considered for $\chron$ are restricted since reflexivity is lost.
\begin{theorem}
    If $\genericworld$ is a non-totally vicious spacetime, then $\mathbf{S4} \nsubseteq \modal{\mframe{\genericworld}{\chron}}$.
\end{theorem}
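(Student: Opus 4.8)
The plan is to refute, on the frame $\mframe{\genericworld}{\chron}$, the reflexivity axiom $aT := A \rightarrow \poss A$, which belongs to $\mathbf{S4} = \mathbf{K4} + aT$. Since $aT$ corresponds to reflexivity of the accessibility relation, and a non-totally vicious spacetime is by definition one in which $\chron$ fails to be reflexive, I expect $aT$ to be invalid on this frame. It then suffices to exhibit a single evaluation and point witnessing the failure, which immediately yields $aT \notin \modal{\mframe{\genericworld}{\chron}}$ and hence $\mathbf{S4} \nsubseteq \modal{\mframe{\genericworld}{\chron}}$.

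First I would use the NTV hypothesis to fix a point $x \in \genericworld$ with $x \nchron x$. Next I would build an evaluation $V$ that isolates this non-reflexive witness: choose an atomic proposition $p \in \atomicprops$ and set $V(p) = \{x\}$ (the values on the remaining propositions are irrelevant). Under this evaluation $p$ holds at $x$. I would then check that $\poss p$ fails at $x$: by the semantics of $\poss$, $\mentail{\mmodel{\genericworld}{\chron}{V}}{x}{\poss p}$ would require some $y$ with $x \chron y$ and $y \in V(p) = \{x\}$, forcing $x \chron x$ and contradicting the choice of $x$. Therefore the instance $p \rightarrow \poss p$ of $aT$ is false at $x$ under $V$.

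Since we have produced an evaluation and a point at which $aT$ is not satisfied, $aT$ does not hold on all evaluations and points of $\mframe{\genericworld}{\chron}$, so $aT \notin \modal{\mframe{\genericworld}{\chron}}$, which gives $\mathbf{S4} \nsubseteq \modal{\mframe{\genericworld}{\chron}}$. There is no real obstacle here; the only step requiring a moment's care is the verification that $\poss p$ genuinely fails, which depends on having chosen $V(p)$ to be exactly the singleton $\{x\}$, so that the sole candidate $\chron$-successor carrying $p$ is the non-reflexive point $x$ itself rather than some other point that $x$ can reach chronologically.
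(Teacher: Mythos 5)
Your proposal is correct and matches the paper's intended argument: the paper leaves this theorem to the remark that the proofs in that section ``simply rely on the properties of the relevant causal relation,'' and the property in question is exactly the failure of reflexivity of $\chron$ at the NTV witness point, refuting $aT \in \mathbf{S4}$. Your explicit countermodel with $V(p) = \{x\}$ is precisely the standard instantiation of the $aT$--reflexivity frame correspondence that the paper implicitly invokes, and it is carried out correctly.
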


Note that $\after$ and $\caus$ are not affected by a spacetime being NTV ($\after$ is still reflexive), and therefore their modal logics are not affected either.
\begin{corollary}
    If $M$ is NTV and not cNTV, then $\mathbf{S4} \subseteq \modal{\mframe{\genericworld}{\after}} = \modal{\mframe{\genericworld}{\caus}}$.
\end{corollary}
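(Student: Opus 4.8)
The plan is to establish the two assertions separately: the inclusion $\mathbf{S4}\subseteq\modal{\mframe{\genericworld}{\after}}$ and the equality of frame logics $\modal{\mframe{\genericworld}{\after}}=\modal{\mframe{\genericworld}{\caus}}$. The single fact driving both is that being \emph{not} cNTV means precisely that $x\after x$ for every $x\in\genericworld$, that is, $\after$ is reflexive. (The NTV hypothesis plays no role in the conclusion itself; it serves only to place the statement on the ladder, just above cNTV, contrasting with the failure of reflexivity for $\chron$ in the NTV case.)

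For the inclusion I would note that $\after$ is transitive by Lemma~\ref{lem:relcommonproperties}, and reflexive by the not-cNTV assumption. Hence $\after$ is a preorder, so it validates both $a4$ (transitivity) and $aT$ (reflexivity), and therefore $\mathbf{S4}=\mathbf{K4}+aT\subseteq\modal{\mframe{\genericworld}{\after}}$. This is exactly the earlier corollary specialised to a reflexive $\after$.

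For the equality, the idea is to show that $\after$ and $\caus$ coincide \emph{as relations} on $\genericworld$, whence the two frames are literally identical and a fortiori have the same logic. From the definitions, $x\caus y$ holds iff there is a causal curve from $x$ to $y$ \emph{or} $x=y$, while $x\after y$ holds iff there is a causal curve from $x$ to $y$; thus $\caus$ is the reflexive completion $\after\cup\Delta$ of $\after$, where $\Delta=\{(x,x):x\in\genericworld\}$ (and $\after\subseteq\caus$ is recorded in Lemma~\ref{lemma:relations between causal orders}). For distinct $x\neq y$ the two relations already agree, since the clause $x=y$ is then vacuous. On the diagonal, $x\caus x$ always holds (reflexivity of $\caus$, Lemma~\ref{lem:relcommonproperties}), and $x\after x$ holds for every $x$ precisely because $\genericworld$ is not cNTV; hence $\Delta\subseteq{\after}$ and so $\after=\caus$.

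The argument is routine and I do not expect a real obstacle: the only point requiring any care is the behaviour on the diagonal, which is exactly where the not-cNTV hypothesis is consumed, and the trivial observation that equality of the underlying relations forces equality of the associated frame logics. Everything else is purely definitional, together with the preorder properties collected in Lemma~\ref{lem:relcommonproperties}.
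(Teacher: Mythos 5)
Your proof is correct and follows exactly the reasoning the paper leaves implicit: not-cNTV gives reflexivity of $\after$, which together with transitivity yields $\mathbf{S4}$, and makes $\after$ coincide with $\caus$ as relations (the paper's surrounding remark that ``$\after$ is still reflexive'' and that $\after$ and $\caus$ become indistinguishable is precisely your argument). Your additional observation that the NTV hypothesis is not consumed by the conclusion is also accurate.
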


\subsection{Chronological}
Irreflexivity of $\chron$ means that the normal modal logic does not fundamentally change from a non-totally vicious spacetime.
However, by considering $\gabb$, we obtain the following.

\begin{theorem}
    If $M$ is a chronological spacetime, then $\mathbf{OI} + \gabb \subseteq \modal{\mframe{\genericworld}{\chron}}$.
\end{theorem}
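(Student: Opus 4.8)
The plan is to split the statement into its two natural ingredients: that $\mathbf{OI}$ is already contained in the logic of the frame $F=\mframe{\genericworld}{\chron}$, and that adjoining the irreflexivity rule $\gabb$ does not take us outside $\modal{F}$. Since $\mathbf{OI}+\gabb$ is by definition the smallest set containing $\mathbf{OI}$, closed under the normal-modal-logic machinery (uniform substitution, modus ponens, and necessitation), and closed under $\gabb$, and since $\modal{F}$ is itself a normal modal logic (frame-validity is preserved by all of those generative rules), it suffices to verify two things: that $\mathbf{OI}\subseteq\modal{F}$, and that $\modal{F}$ is closed under $\gabb$. Note first that $\mathbf{OI}=\mathbf{D4}+ad_2$ contains no reflexivity axiom $aT$, so that $\mathbf{OI}+\gabb$ is even well-defined.

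For the first ingredient, recall from Lemma~\ref{lem:relcommonproperties} that $\chron$ is transitive and semi-full, that is, serial and $2$-dense. These are precisely the frame conditions corresponding to $a4$, $aD$, and $ad_2$, so by Corollary~\ref{cor:spacetimeclass}(ii) we already have $\mathbf{OI}\subseteq\modal{F}$. Nothing here uses the chronological hypothesis; that hypothesis enters only through the irreflexivity of $\chron$, which is exactly what powers the second ingredient.

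The crux is therefore to show that $\modal{F}$ is closed under $\gabb$, i.e.\ that the rule is sound on the irreflexive frame $F$. Suppose $\lnot(p\rightarrow\poss p)\rightarrow\phi$ is valid on $F$, where $p$ does not occur in $\phi$; I must show $\phi$ is valid on $F$. Fix an arbitrary evaluation $V$ and point $x\in\genericworld$, and define an evaluation $V'$ agreeing with $V$ on every atomic proposition except $p$, for which I set $V'(p)=\{x\}$. In the model $\mmodel{\genericworld}{\chron}{V'}$ we then have $x\in V'(p)$, so $p$ holds at $x$; but $\poss p$ fails at $x$, since the only point satisfying $p$ is $x$ itself and $x\nchron x$ because the spacetime is chronological. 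Hence $\lnot(p\rightarrow\poss p)$ holds at $x$, and validity of the premise gives $\mmodel{\genericworld}{\chron}{V'},x\models\phi$. Because $p$ does not occur in $\phi$ and $V'$ differs from $V$ only at $p$, a routine induction on the structure of $\phi$ transfers this to $\mmodel{\genericworld}{\chron}{V},x\models\phi$. As $V$ and $x$ were arbitrary, $\phi$ is valid on $F$, establishing closure under $\gabb$ and hence $\mathbf{OI}+\gabb\subseteq\modal{F}$ by minimality.

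The main (indeed essentially only) subtle point is this soundness verification. It hinges on being able to force $\poss p$ false at $x$ using the singleton valuation $V'(p)=\{x\}$, which succeeds exactly because irreflexivity gives $x\nchron x$; and on the side condition $p\notin\phi$, which is what lets the conclusion be pulled back from $V'$ to the original $V$. Everything else is bookkeeping with the definition of $\mathbf{OI}+\gabb$ and the already-established inclusion $\mathbf{OI}\subseteq\modal{F}$.
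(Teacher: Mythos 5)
Your proof is correct and takes essentially the same route as the paper, which obtains $\mathbf{OI}\subseteq\modal{\mframe{\genericworld}{\chron}}$ from Corollary~\ref{cor:spacetimeclass} and then invokes irreflexivity of $\chron$ (the chronological hypothesis) together with soundness of $\gabb$ on irreflexive frames, a fact it delegates to~\cite{Gabbay1981} rather than proving. Your singleton-valuation argument ($V'(p)=\{x\}$, using $x\nchron x$ and the side condition $p\notin\phi$) is precisely the standard verification behind that citation, so you have merely made explicit what the paper treats as known.
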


Note how the logic of $\after$ or the reflexive causal relations have not changed at the levels of non-totally vicious and chronological from a totally vicious spacetime.

\subsection{Causally Non-Totally Vicious (cNTV)}
Having introduced the notion of cNTV in Section~\ref{sec:spacetime:cntv}, we now see the results of separating the causal condition in the modal logic of $\after$.
\begin{theorem}
    If $\genericworld$ is a causally non-totally vicious spacetime, then $\mathbf{S4} \nsubseteq \modal{\mframe{\genericworld}{\after}}$.
\end{theorem}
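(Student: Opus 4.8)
The plan is to exploit the fact that $\mathbf{S4} = \mathbf{K4}+aT$ contains the reflexivity axiom $aT := A \to \poss A$, and then to produce a single evaluation on the frame $\mframe{\genericworld}{\after}$ under which $aT$ fails at one point. Since $aT$ is a theorem of $\mathbf{S4}$, exhibiting even one model of this frame and one point where $aT$ is refuted shows $aT \notin \modal{\mframe{\genericworld}{\after}}$, and hence immediately that $\mathbf{S4} \nsubseteq \modal{\mframe{\genericworld}{\after}}$. This is the exact causal analogue of the earlier argument showing $\mathbf{S4}\nsubseteq\modal{\mframe{\genericworld}{\chron}}$ for non-totally vicious spacetimes, with the role of non-reflexivity of $\chron$ now played by non-reflexivity of $\after$.

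First I would invoke the defining hypothesis of causally non-totally vicious to fix a witness $x\in\genericworld$ with $x\nafter x$; such a point exists by the definition of cNTV. The point of the construction is that irreflexivity of $\after$ at $x$ is precisely what obstructs the reflexivity axiom there. Next I would build a falsifying model: choose an atomic proposition $p$ and an evaluation $\genericeval$ with $\genericeval(p)=\{x\}$ (arbitrary on the remaining atoms), yielding a model $\mathcal{M}=\mmodel{\genericworld}{\after}{\genericeval}$. Then $\mentail{\mathcal{M}}{x}{p}$ holds since $x\in\genericeval(p)$. On the other hand, $\mentail{\mathcal{M}}{x}{\poss p}$ would require some $y$ with $x\after y$ and $y\in\genericeval(p)$; but the only $p$-point is $x$ itself, so this would force $x\after x$, contradicting the choice of $x$. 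Hence $\nmentail{\mathcal{M}}{x}{\poss p}$, so $\nmentail{\mathcal{M}}{x}{p\to\poss p}$, which gives $aT\notin\modal{\mframe{\genericworld}{\after}}$ and therefore $\mathbf{S4}\nsubseteq\modal{\mframe{\genericworld}{\after}}$.

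There is no serious obstacle here: the entire argument is \emph{local}, inspecting only the $\after$-successors of the single point $x$, so no global causal structure of the spacetime is needed and the transitivity or seriality of $\after$ (from Lemma~\ref{lem:relcommonproperties}) plays no role. The only point that warrants care is to phrase the refutation directly as a failing valuation rather than appealing to frame correspondence: by localizing $p$ to the single irreflexive point, the failure of reflexivity at one point already suffices, so one avoids needing any global frame-definability statement for $aT$.
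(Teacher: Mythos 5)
Your proof is correct and matches the paper's intended argument: the paper leaves this proof implicit (noting only that it relies on the properties of the causal relation), and the obvious argument is exactly yours — use the cNTV witness $x$ with $x \nafter x$ to refute the reflexivity axiom $aT$, hence $\mathbf{S4}$, on the frame $\mframe{\genericworld}{\after}$. Your explicit valuation $\genericeval(p)=\{x\}$ is a clean way to carry this out without invoking frame correspondence, and it is the causal analogue of the NTV case for $\chron$, just as you observe.
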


When considering a spacetime that is both chronological and cNTV, the two respective theorems are used together to reveal that both $\chron$ and $\after$ have different modal logics from a non-totally vicious spacetime.

\subsection{Causal}
A causal spacetime, in a similar way to a chronological spacetime, does not change the normal modal logics of its causal relations, but the logic under $\after$ can be changed if $\gabb$ is considered.
\begin{theorem}
    If $\genericworld$ is a causal spacetime, then $\afterlogic + \gabb \subseteq \modal{\mframe{\genericworld}{\after}}$.
\end{theorem}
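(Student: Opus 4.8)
The plan is to combine two facts: the containment $\afterlogic \subseteq \modal{\mframe{\genericworld}{\after}}$ established by the corollary following Theorem~\ref{thm:spacetimeafter}, and the closure of the frame's logic under Gabbay's rule $\gabb$. Since $\afterlogic$ contains no reflexivity axiom ($aT \notin \afterlogic$), the logic $\afterlogic + \gabb$ is well-defined, and by construction it is the smallest normal modal logic extending $\afterlogic$ that is closed under $\gabb$. Because $\modal{\mframe{\genericworld}{\after}}$ is itself a normal modal logic (frame validity is preserved under uniform substitution, modus ponens and necessitation) that already contains $\afterlogic$, it will suffice to show it is closed under $\gabb$; the desired containment then follows by minimality.

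First I would extract the structural input. Since $\genericworld$ is causal, the earlier lemma characterising causality (causal iff $\after$ is irreflexive) gives $x \nafter x$ for every $x \in \genericworld$. This irreflexivity is precisely what validates $\gabb$ on the frame.

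The heart of the argument is the closure step. Suppose $\lnot(p \to \poss p) \to \phi \in \modal{\mframe{\genericworld}{\after}}$ with $p$ not occurring in $\phi$; I must deduce $\phi \in \modal{\mframe{\genericworld}{\after}}$. Fixing an arbitrary evaluation $V$ and point $w \in \genericworld$, I would define $V'$ to agree with $V$ on every atom except $p$, and set $V'(p) = \{w\}$. Then in $\mmodel{\genericworld}{\after}{V'}$ the point $w$ satisfies $p$, whereas $\poss p$ holds at $w$ iff $w \after w$, which fails by irreflexivity; hence $w$ satisfies $\lnot(p \to \poss p)$ under $V'$. As the implication is valid on the frame, it holds under $V'$, forcing $\mentail{\mmodel{\genericworld}{\after}{V'}}{w}{\phi}$. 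Since $p$ does not occur in $\phi$, its truth value at $w$ is insensitive to the valuation of $p$, so $\mentail{\mmodel{\genericworld}{\after}{V}}{w}{\phi}$ as well. As $V$ and $w$ were arbitrary, $\phi$ is valid on the frame, establishing closure under $\gabb$.

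The main obstacle --- though a standard one --- is this soundness of $\gabb$ on irreflexive frames, and within it the delicate ingredient is the point-by-point singleton valuation $V'(p) = \{w\}$: by isolating $w$ as the unique $p$-world, the formula $\lnot(p \to \poss p)$ detects exactly the failure of reflexivity at $w$, which is available precisely because causality forces $\after$ to be irreflexive. Everything else is bookkeeping with the already-proven containment $\afterlogic \subseteq \modal{\mframe{\genericworld}{\after}}$ and the minimality of $\afterlogic + \gabb$.
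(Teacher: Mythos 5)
Your proof is correct and follows the same route the paper implicitly takes: it combines the lemma that a causal spacetime has irreflexive $\after$, the already-established containment $\afterlogic \subseteq \modal{\mframe{\genericworld}{\after}}$, and the standard soundness of Gabbay's rule on irreflexive frames via the singleton valuation $V'(p) = \{w\}$. The paper leaves this as ``fairly obvious,'' so your write-up simply supplies the details of exactly the intended argument.
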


\subsection{Distinguishing}
We begin first by extending the notion of a spacetime being distinguishing into the Kripke frame setting.

\begin{definition}
    Let $F = \mframe{\genericworld}{\genericrelation}$.
    We say $F$ is future-distinguishing if for all $x,y \in \genericworld$ whenever ${\genericrelation(x) = {\genericrelation}(y)}$, then $x = y$.
    We say $F$ is past-distinguishing if for all $x,y \in \genericworld$ whenever $\genericrelation^{-1}(x) =~{\genericrelation^{-1}(y)}$, then $x = y$.
    Finally, $F$ is distinguishing if it is both past- and future-distinguishing.
\end{definition}

Thus, the spacetime definition of distinguishing is preserved when considering the frame of a spacetime with $\chron$ (since $I^{\pm}(x) = {\genericrelation}^{\pm 1}(x)$).
Our goal is to find some normal modal logic that can capture distinguishing spacetimes.
However, despite being able to maintain definitions across domains, it becomes clear that a past-distinguishing frame has no associated modal formula.
\begin{theoremrep}
There is no modal formula, $\phi$, such that if $\mframe{\genericworld}{\genericrelation} \models \phi$ then $\mframe{\genericworld}{\genericrelation}$ is past-distinguishing.
\end{theoremrep}
\begin{proof}
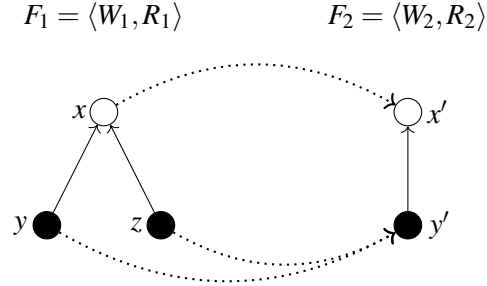
\begin{figure}[t]
    \centering
    \begin{tikzpicture}[nodes={draw, circle}, <-,]
    \node[label=above:{$F_1 = \mframe{W_1}{R_1}$},label=left:{$x$}] (s) {} [grow'=down]
        child {node[refl] (0) [label=left:{$z$}] {}}
        child {node[refl] (1) [label=left:{$y$}] {}};
    \node[label=above:{$F_2 = \mframe{W_2}{R_2}$},label=right:{$x'$}] (n) at (4,0) {} [grow'=down]
            child {node[refl] (n0) [label=right:{$y'$}] {}};

    \draw[->,dotted,thick] (s) to[bend left] (n);
    \draw[->,dotted,thick] (0) to[bend right] (n0);
    \draw[->,dotted,thick] (1) to[bend right] (n0);
    \end{tikzpicture}
    \caption{A bisimulation from a (past-)distinguishing frame to one that is not.}
    \label{fig:pastdistframes}
\end{figure}
The frames in Figure~\ref{fig:pastdistframes} act as a counterexample.
The left frame, $F_1$, is past-distinguishing ($F_1 \models \phi$); but the right frame, $F_2$, is not ($F_2 \models \lnot \phi$).
There is a bisimulation, $Z \subset F_1 \times F_2$, depicted by the dotted lines ($Z = \{(x, x'), (y, y'), (z,y')\}$).
Since modal formulae are invariant under bisimulation, this implies that $F_2 \models \phi$, which is a contradiction.\footnote{See Section~\ref{sec:bisim}/\cite{Blackburn2001} for information on bisimulation.}
\end{proof}

Similarly, future-distinguishing frames have no associated modal formula.
\begin{theoremrep}
    There is no modal formula, $\phi$, such that if $\mframe{\genericworld}{\genericrelation} \models \phi$ then $\mframe{\genericworld}{\genericrelation}$ is future-distinguishing.
\end{theoremrep}
\begin{proof}
A contradiction is obtained by reversing the relation in Figure~\ref{fig:pastdistframes} ($F_1' = \mframe{W_1}{R_1^{-1}}$, $F_2' = \mframe{W_2}{R_2^{-1}}$) and observing a similar bisimulation from $F_1'$ to $F_2'$.
\end{proof}

\begin{corollaryrep}
    A frame being distinguishing is not modally definable.
\end{corollaryrep}
\begin{proof}
    We have that $F_1$ is (past- and future-) distinguishing, but $F_2$ is not (past-) distinguishing.
\end{proof}

We note that whilst it is infeasible to use a modal formula to separate distinguishing frames from non-distinguishing frames, it might be feasible to use some rule that characterises distinguishing frames in a similar way as to how irreflexive frames are characterised by a rule~\cite{Gabbay1981}.
Such an investigation is beyond the focus of this work.


\section{Discussion}
\paragraph{Modal logics of the causal ladder.}
\begin{table}[t]
    \centering
    \caption{The modal logics of spacetime at different levels of the causal ladder.
    Blank cells use the same logic as the cell above.}
    \begin{tabular}{|c|c|c|c|}
         \cline{2-4}
         \multicolumn{1}{}{} & \multicolumn{3}{|c|}{Spacetime Modal Logic Containment ($L = \modal{\mframe{M}{\genericrelation}}$)} \\ \hline
         Spacetime Type ($M$) & $\chron$& $\after$ & $\chroneq~/\caus$ \\ \hline
         Totally Vicious & $\mathbf{S4} \subseteq L$ & $\mathbf{S4} \subseteq L$ & $\mathbf{S4} \subseteq L$ \\ \hline
         Non-Totally Vicious & $\mathbf{OI} \subseteq L$ & & \\ \hline
         Chronological & $\mathbf{OI}+\gabb \subseteq L$ & & \\ \hline
         Causally Non-Totally Vicious & $\mathbf{OI}$ & $\afterlogic \subseteq L $ & \\ \hline
         cNTV and Chronological & $\mathbf{OI}+\gabb \subseteq L$ & & \\ \hline
         Casual & & $\afterlogic+\gabb \subseteq L$ & \\ \hline
         Distinguishing & & & \\ \hline
         $\vdots$ & \multicolumn{3}{|c|}{$\vdots$} \\ \hline
         Globally hyperbolic & $\mathbf{OI}+\gabb \subseteq L$ & $\afterlogic +\gabb \subseteq L$ & $\mathbf{S4} \subseteq L$ \\ \hline
    \end{tabular}
    \label{tab:modalladder}
\end{table}

Table~\ref{tab:modalladder} shows the containment of the various modal logics for spacetimes and their various causal relations on different levels of the causal ladder.
We note that spacetimes beyond the cNTV condition, experience no changes in their normal modal logic.
Even allowing Gabbay's irreflexive rule, there is no change in the simplest modal logic after the causal level of the ladder.

The later levels of the causal ladder require new rules or additional operations to distinguish between their logics on the ladder.
Note also how $\chroneq$ and $\caus$ do not change their base normal modal logic on any level of the causal ladder.
The logics may change if an additional operation is added that provides additional formula or rules due to, \eg{} antisymmetry, which occur for $\chroneq$ and $\caus$ at the chronological and causal levels respectively.

As explored in Section~\ref{sec:separable}, two-dimensional spacetimes have a more expressive modal logic than generic spacetimes.
Thus for two-dimensional spacetimes, $\afterlogic$ is replaced with $\aftertwologic$ in Table~\ref{tab:modalladder}.

\paragraph{Other effects on the logic.}
Note that spacetimes on the same rung of the causal ladder may nevertheless exhibit distinct modal logics.
For instance, contrast ordinary two-dimensional Minkowski space with the $2^{-}$-Minkowski spacetime: $\{(t,x)\in\mathbb{R}^2: t <0\}$, inheriting the same metric/causal structure as in Example~\ref{example:minkowski space}.
Both are globally hyperbolic, but $2^{-}$-Minkowski spacetime does not have the (future-directed) confluence property for the causal relation (see~Figure~\ref{fig:diamond}), therefore exhibiting different modal logics.
The logics of the spacetimes under the causal (chronological) relation are $\mathbf{S4.2}$~\cite{Goldblatt1980} ($\mathbf{OI.2}$~\cite{Shapirovsky2002}) and $\mathbf{S4}$ ($\mathbf{OI}$) \cite[Lemma 3.6]{Shapirovsky2005} respectively.

Conversely, it is clear that the modal structure is not expressive enough to completely characterise the spacetime, since all Minkowski spaces of total dimension greater than three share the same modal logic. More can perhaps be said using a multi-modal logic, where it is possible to quantify over causal and chronological curves, and then employing Malament's theorem~\cite{malament1977ClassContinuousTimelike}.

\begin{figure}[t]
    \centering
    \begin{subfigure}{.39\textwidth}
        \begin{tikzpicture}[scale=1.4]
    \tikzminkowski
    \node[label={[label distance=.05cm]-45:$w$}] (w) at (0,-1) {$\bullet$};
    \node[label={[label distance=.05cm]180:$x$}] (x) at (-.75,-.25) {};
    \node[label={[label distance=.05cm]0:$y$}] (y) at (.75,-.25) {};
    \node[label={[label distance=.05cm]45:$z$}] (z) at (0,.5) {$\bullet$};
    \draw (w.center) -- (x.center);
    \draw (w.center) -- (y.center);
    \draw (x.center) -- (z.center);
    \draw (y.center) -- (z.center);
\end{tikzpicture}
        \caption{Minkowski spacetime}
    \end{subfigure}
    \hfill
    \begin{subfigure}{.39\textwidth}
        \begin{tikzpicture}[scale=1.4]
    \tikzminkowski
    \fill [pattern={dots}] (-\xmax,0) -- (-\xmax,\xmax) -- (\xmax+0.2,\xmax) -- (\xmax+0.2,0);
    \node[label={[label distance=.05cm]-45:$w$}] (w) at (0,-1) {$\bullet$};
    \node[label={[label distance=.05cm]180:$x$}] (x) at (-.75,-.25) {};
    \node[label={[label distance=.05cm]0:$y$}] (y) at (.75,-.25) {};
    \node (xl) at (-1,0) {};
    \node (xr) at (-.5,0) {};
    \node (yl) at (.5,0) {};
    \node (yr) at (1,0) {};
    \draw (w.center) -- (x.center);
    \draw (w.center) -- (y.center);
    \draw (x.center) -- (xl.center);
    \draw (x.center) -- (xr.center);
    \draw (y.center) -- (yl.center);
    \draw (y.center) -- (yr.center);
    \fill [pattern={Lines[angle=45,distance=3pt]}] (x.center) -- (xl.center) -- (xr.center);
    \fill [pattern={Lines[angle=45,distance=3pt]}] (y.center) -- (yl.center) -- (yr.center);
\end{tikzpicture}
        \caption{$2^{-}$-Minkowski spacetime}
    \end{subfigure}
    \caption{Confluence on different spacetimes}
    \label{fig:diamond}
\end{figure}
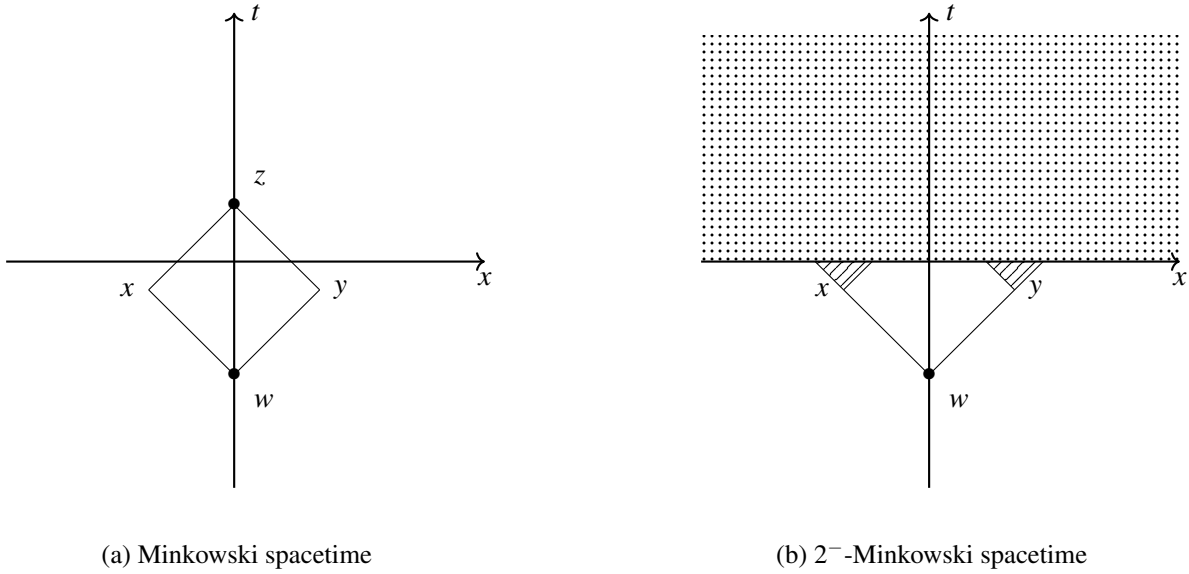

\paragraph{Multiple time dimensions.}
Whilst we have discussed the ideas behind spacetimes with arbitrary spatial dimension, there is a notion of spacetimes with multiple time dimensions.
A $n$-spatial, $t$-temporal dimensional (or $(n,t)$-dimensional) spacetime can then be described as working on a $n+t$-dimensional manifold.

Our universe, $(3,1)$-dimensional, can be considered ``nice''.
By changing the number of spatial or temporal dimensions, the resulting universe could be considered either too simple or too unstable (not ``nice'').
For instance, Tegmark~\cite{Tegmark1997} provides various arguments for why observers cannot exist in $(n,t)$-dimensional universes where $(n, t) \neq (3,1)$, but the arguments made are not rigorous.

One fundamental question to ask is whether there is a fundamental logical reason that these universes are not ``nice''.
Whilst it is predicted these other universes may not have observers, can this (or some other property) be rigorously shown or proven through logical reasoning of the causal structure.
In Section~\ref{sec:separable:separability}, we showed that modal logic seems expressible enough to distinguish between $(1,1)$-dimensional and $(n,1)$-dimensional spacetimes ($n \geq 2$), but it is unlikely to distinguish any higher (spatial) dimensional spacetime from others.
Extending our definition of $\necc$ and $\poss$ on the causal relations to higher temporal dimensions, or using multimodal logic to represent causal relations in different time dimensions, could provide the start towards a logical language to discuss what universes are ``nice'' or not.

\shelf{Calabi-Yau Manifold}

\section{Conclusion}
In this work, we have explored modal logic in the frames of general smooth spacetimes and their various causal relations ($\chron$, $\chroneq$, $\after$, $\caus$).
We have shown the minimal normal modal logics of general spacetime and have explored the after formula, $\afterformula$, within the modal logics of spacetimes, showing that it exists in the logic for each causal relation.
Further, we have shown that there is some separation between the normal modal logic of the after relation, $\after$, with two-dimensional spacetimes, which contains the after-2 formula, $\aftertwoformula$, and 3,2-density, $ad_{3,2}$.
Finally, we have seen how the logics of a spacetime changes depending on its position on the causal ladder.

We have already explored several avenues for future research.
Whilst we can already see that modal logic is capable of distinguishing some spacetimes from others, the more complex levels of the causal ladder require a more expressive temporal logic to express the relative behaviours.
What sort of operations are needed to express these properties remains an open question.

\section*{Acknowledgements}
\change{We thank Prakash Panangaden for helpful comments and feedback that helped improve our exposition, and for spotting a gap in the proof of~Section~\ref{sec:spacetime:afterformula}.}
This work has been partially funded by the French National Research Agency (ANR) within the framework of ``Plan France 2030'', under the research projects EPIQ ANR-22-PETQ-0007, HQI-Acquisition ANR-22-PNCQ-0001 and HQI-R\&D ANR-22-PNCQ-0002.

\bibliographystyle{eptcs}
\bibliography{bibliography}

\end{document}